\newcommand{\ve}{\varepsilon}
\renewcommand{\sc}{\textsc}
\newcommand{\MA}{Monge-Amp\`{e}re }
\newcommand{\Th}{\mathcal{T}_h}
\newtheorem{Theorem}{Theorem}[section]
\newtheorem{Lemma}[Theorem]{Lemma}
\theoremstyle{definition}
\newtheorem{Definition}[Theorem]{Definition}
\newtheorem{remark}[Theorem]{Remark}
\numberwithin{equation}{section}
\def\l({\left(}
\def\r){\right)}
\def\uve{u_{\varepsilon}}
\def\Nhi{\mathcal{N}_h^0}
\def\Nhb{\mathcal{N}_h^b}
\def\sdd{\nabla^2_{\delta}} 
\def\sddp{\nabla^{2,+}_{\delta}} 
\def\sddm{\nabla^{2,-}_{\delta}} 
\def\sddb{\bm{\nabla}^2_{\delta}} 
\def\sddbp{\bm{\nabla}^{2,+}_{\delta}} 
\def\sddbm{\bm{\nabla}^{2,-}_{\delta}} 
\def\Vperp{\mathbb{S}^{\perp}}
\def\Vperpt{\mathbb{S}^{\perp}_{\theta}}
\def\St{\mathbb S_{\theta}}
\def\Vh{\mathbb{V}_h}
\def\Cka{C^{2+k,\alpha}}
\def\Wti{W^2_{\infty}}
\def\Wtio{W^2_{\infty}(\Omega)}
\def\Bhi{B_i}
\def\interp{\mathcal I_h}
\def\eps{{\tt eps}}
\def\bu{\mathbf{u}}
\def\bw{\mathbf{w}}
\def\bz{\mathbf{z}}
\def\bv{\mathbf{v}}
\def\bF{\mathbf{f}}
\def\bT{\mathbf{T}}
\def\bD{\mathbf{D}}
\def\bH{\mathbf{H}}
\def\bV{\mathbf{V}}
\definecolor{red}{rgb}{1,0,0}
\definecolor{blue}{rgb}{0,0,1}
\begin{document}

\title[Two-scale method for the Monge-Amp\`ere Equation]{Two-scale
  method for the Monge-Amp\`ere Equation: Convergence to the viscosity solution}

\author{R. H. Nochetto$^1$}
\address{Department of Mathematics and Institute for Physical Science
  and Technology, University of Maryland, College Park, Maryland 20742}
\email{rhn@math.umd.edu}
\thanks{$^1$ Partially supported by the NSF Grant DMS -1411808, the
Institut Henri Poincar\'e (Paris) and the Hausdorff Institute (Bonn).}

\author{D. Ntogkas$^2$}
\address{Department of Mathematics, University of Maryland, College Park, Maryland 20742}
\email{dimnt@math.umd.edu}
\thanks{$^2$ Partially supported by the  NSF Grant DMS -1411808 and
 the 2016-2017 Patrick and Marguerite Sung Fellowship of the
 University of Maryland.}

\author{W. Zhang$^3$}
\address{Department of Mathematics, Rutgers University, New Brunswick, New Jersey 08854}
\email{wujun@math.rutgers.edu}
\thanks{$^3$ Partially supported by the  NSF Grant DMS -1411808 and the
Brin Postdoctoral Fellowship of the University of Maryland.}

\begin{abstract}
We propose a two-scale finite element method for the Monge-Amp\`ere
equation with Dirichlet boundary condition
in dimension $d\ge2$ and prove that it converges to the viscosity
solution uniformly. The method is inspired by a finite difference
method of Froese and Oberman, but is defined on unstructured grids
and relies on two separate scales:
the first one is the mesh size $h$ and
the second one is a larger scale that controls appropriate directions
and substitutes the need of a wide stencil. The main tools for the analysis 
are a discrete comparison principle and discrete barrier functions that
control the behavior of the discrete solution, which is continuous
piecewise linear, both close to the boundary 
and in the interior of the domain. \end{abstract}

\maketitle

\textbf{Key words.} \MA equation, viscosity solution, two-scale
method, monotone scheme, convergence, regularization.

\vspace{0.2cm}

\textbf{AMS subject classifications.} 65N30, 65N12, 65N06, 35J96

\section{Introduction}
We consider the Monge-Amp\`ere equation with Dirichlet boundary condition: 
\begin{equation} \label{E:MA}
\left\{
\begin{aligned}
\det{D^2u}&=f  & {\rm in} \ &\Omega \subset \mathbb {R}^d,
\\ u &=g & {\rm on} \ &\partial \Omega,
\end{aligned}
\right.
\end{equation}
where $f \geq 0$ is uniformly continuous, $\Omega$ is a uniformly
convex domain (not necessarily $W^2_\infty$)
and $g$ is a continuous function. We seek a
\textit{convex} solution $u$ of \eqref{E:MA}, which is critical for \eqref{E:MA} to be elliptic and 
have a unique viscosity solution \cite{Gut}. 

The Monge-Amp\`ere equation has a wide spectrum of applications in optimal mass transport problems, geometry, nonlinear elasticity and meteorology. These applications lead to an increasing interest in the investigation of efficient numerical methods. 
There exist several methods for the Monge-Amp\`ere equation. 
These include the early work by Oliker and Prussner \cite{OlPr} for
the space dimension $d=2$, the vanishing moment methods by Feng and
Neilan \cite{FeNe1,FeNe2}, the penalty method of Brenner, Gudi, Neilan
\cite{BrenNeil}, least squares and augmented Lagrangian methods by
Dean and Glowinski \cite{DeGl1, DeGl2, Gl}, the finite difference
method proposed recently by Benamou, Collino and Mirebeau
\cite{BeCoMi, Mir}, and a new semi-Lagrangian method by Feng and Jensen \cite{FeJe:16}.
Our work is mostly motivated by the wide-stencil scheme proposed by
Froese and Oberman, who proved convergence of the scheme \cite{FrOb1}.
Awanou \cite{Aw} proved a linear rate of convergence for classical
solutions for the wide-stencil method, when applied to a perturbed
Monge-Amp\`ere equation with an extra lower order term $\delta u$;
the parameter $\delta > 0$ is independent of the mesh and appears in
reciprocal form in the rate. In contrast, our analysis hinges on
the discrete comparison principle and two discrete barrier functions,
which are
instrumental in proving convergence to the viscosity solution of
\eqref{E:MA}. Moreover, our methodology further leads to
pointwise error estimates, which we derive in \cite{NNZ:17}.

\subsection{Our contribution}

Our method hinges on the following
formula for the determinant of the semi-positive Hessian $D^2w$ of a smooth
function $w$ as in \cite{FrOb1}:
\begin{equation}\label{E:Det}
 \det{D^2w}(x) = \min\limits_{(v_1,\ldots,v_d) \in \Vperp} \prod_{j=1}^d v_j^TD^2w(x) \ v_j ,
 \end{equation}
where $\Vperp$ is the set of all $d-$orthonormal bases in $\mathbb{R}^d$. The minimum is achieved for the eigenvectors of $D^2w$ and is equal to $\prod_{j=1}^d \lambda_j$, where $\lambda_j, j=1,\ldots, d$ are the corresponding eigenvalues.
To discretize \eqref{E:Det} we introduce two scales $h$ and
$\delta$. We discretize the domain $\Omega$ by a shape regular and
quasi-uniform mesh $\Th$ with spacing $h$, and
construct a space $\mathbb{V}_h$ of continuous piecewise linear
functions associated with the mesh $\Th$. The second scale $\delta$ is
the length of directions we use to approximate second directional
derivatives by central second order differences
\[
\sdd  w (x;v) := \frac{ w(x+\delta v) -2w(x) +w(x-\delta v) }{ \delta^2}
\quad
\text{and}
\quad
|v| = 1 ,
\] 
for any $w\in C^0(\overline{\Omega})$;
this formula will be appropriately modified close to $\partial\Omega$.
We denote by $\uve$ our discrete solution, where $\varepsilon = (h,\delta)$
represents the two scales, and define the discrete \MA operator to be
$$
T_\ve[\uve](x_i) := \min\limits_{(v_1,\ldots,v_d)\in \Vperp} \prod_{j=1}^d  
\sdd \uve (x_i;v_j) ,
$$
where $x_i$ is a generic node of $\Th$.
This leads to a clear separation of scales, which is a key theoretical advantage
over the original wide stencil method of \cite{FrOb1}. This also
yields continuous dependence of $\uve$ on data, which we further exploit in
\cite{NNZ:17}. In fact, such continuous dependence result, along with 
the discrete comparison principle and the use of some discrete barrier functions
give rise to rates of convergence in $L^{\infty}(\Omega)$ for viscosity
solutions of \eqref{E:MA} under some additional regularity
requirements \cite{NNZ:17}.
To make the two-scale method practical,
we resort to fast search techniques within \cite{WalkerPaper,WalkerWeb}
to locate points $x_i\pm\delta v_j$, which may not be nodes of $\Th$
in general.

The main tool in the current work is the discrete comparison principle that
enables us to control the behavior of $\uve$ and prove its uniform
convergence to the unique viscosity solution $u$ of \eqref{E:MA} as
$\delta\to0$ and $h\delta^{-1}\to0$.
It is important to realize, as already observed in
\cite{FeJe:16}, that such a convergence
is not an immediate consequence of the theory developed
by Barles and Souganidis \cite{BaSoug}. This theory assumes that the discrete 
operator is consistent up to the boundary and that the boundary conditions
are treated in the viscosity sense; our operator $T_\ve$ is only
consistent at distance $\delta$ from the boundary and our notion of
Dirichlet condition is classical. Moreover, the theory of \cite{BaSoug}
also hinges on a comparison principle 
for the underlying equation, which in the case of the \MA equation
\eqref{E:MA} requires 
that the subsolution and supersolution constructed through the limit 
supremum and limit infimum of $u_\ve$
be convex.

We present two proofs of uniform convergence.
The first one, discussed in Sections \ref{S:smooth-approx} and
\ref{S:convergence}, relies on regularization 
of data $f,g$ and $\Omega$ and the discrete comparison 
principle. This approach sets the stage for proving rates of
convergence for the 2-scale method, which we derive in \cite{NNZ:17}.
Regularization is a natural device used already for Monge-Amp\`ere
by De Philippis and Figalli \cite{DePhilippisFigalli:13} as a PDE tool and Awanou
for numerical purposes \cite{Awanou:15}.
The second approach is along the lines of Barles and Souganidis \cite{BaSoug},
uses techniques similar to those developed by Feng and Jensen
\cite{FeJe:16}, and circumvents the two main issues mentioned above.
Controlling the behavior of $\uve$ in a $\delta$-neighborhood of the
boundary $\partial\Omega$ is critical to both approaches. This is
achieved via a discrete barrier function discussed in Section
\ref{S:barriers}; similar constructions are discussed in
\cite{FeJe:16,NoZh,NoZh2}.

To showcase the performance of our 2-scale method,
we present computational experiments 
for a classical and a degenerate viscosity solution solved with a
semi-smooth Newton method. We obtain linear rates for both cases. 
We also present an example with unbounded $f$,
which does not fall within our theory, and still observe
convergence although with a reduced rate.

It is worth comparing the two-scale method with the Oliker-Prussner
method \cite{OlPr,NoZh2}. The former is easier to implement because it
does not require the explicit computation of subdifferentials, and is
formulated on shape regular meshes $\Th$ instead of cartesian meshes.
Although the coarse and fine scales $\delta$ and $h$ must only satisfy
$h\delta^{-1}\to0$ for convergence, rates of convergence require knowledge of
regularity of the exact solution $u$ of \eqref{E:MA} to
choose $\delta = \delta(h)$ \cite{NNZ:17} in contrast to
\cite{NoZh2}.

\subsection{Outline}

In \Cref{S:TwoSc} we introduce our method and the 
	main tool of our analysis,	the discrete comparison principle.
	In \Cref{S:Exist-Uniq}  we prove the existence and uniqueness
	 of our discrete solution. In \Cref{S:Consistency} we prove the 
	consistency of the discrete operator and in \Cref{S:Convergence} we
	prove the uniform convergence of the discrete solution to 
 the viscosity solution of \eqref{E:MA}.  Lastly, in
\Cref{S:NumEx} we document the performance of our method with
numerical experiments.

\section{ Two-Scale Method} \label{S:TwoSc}

\subsection{Ideal Two-Scale Method} \label{S:Ideal}
Let $\mathcal{T}_h$ be a shape-regular and quasi-uniform triangulation
with mesh size $h$. We denote by $\Omega_h$ the union of elements
of $\Th$ and we call it the computational domain.
Let $\mathcal{N}_h$ denote the nodes of
$\mathcal{T}_h$,
$\mathcal{N}_h^b := \{x_i \in \mathcal{N}_h: x_i \in \partial \Omega_h\}$
be the boundary nodes and
$\mathcal{N}_h^0 := \mathcal{N}_h \setminus \mathcal{N}_h^b$
be the interior nodes. We require that $\Nhb \subset \partial \Omega$, which in view of the convexity of $\Omega$ implies that $\Omega_h$ is also convex and $\Omega_h \subset \Omega$.
We denote by $\Vh$ the space of continuous
piecewise linear functions over $\mathcal{T}_h$.
We recall the notation $\Vperp$ for the collection of all
$d$-tuples of orthonormal bases and
$\mathbf{v} := (v_1,\ldots,v_d) \in \Vperp$ for a generic element, whence each component
$v_i$ belongs to the unit sphere $\mathbb{S}$ of $\mathbb{R}^d$.	
For $x_i\in\mathcal{N}_h^0$, we use the formula of centered second differences
\begin{equation} \label{E:2Sc2Dif}
\sdd w(x_i;v_j) :=   \frac{ w(x_i+ \rho \delta v_j) -2 w(x_i) + w(x_i- \rho \delta v_j) }{ \rho^2 \delta^2} ,
\end{equation}
where $0 < \rho \le 1$ is the biggest number such that both $x_i \pm
\rho \delta v_j \in \overline{\Omega}_h$ for all $v_j \in \mathbb{S}$;
we stress that $\rho$ need not be computed exactly.
This is well defined for any
$w \in C^0(\overline{\Omega})$, in particular for $w\in\Vh$.

We seek $\uve \in \Vh$ such that $u^{\varepsilon}(x_i)=g(x_i)$ for
$x_i \in \mathcal{N}_h^b$ and for $x_i \in \mathcal{N}_h^0$
\begin{equation} \label{E:2ScOp}
T_{\varepsilon}[\uve](x_i):=\min\limits_{\mathbf{v} \in \Vperp} \left( \prod_{j=1}^d \sddp \uve(x_i;v_j) - \sum_{j=1}^d \sddm \uve (x_i;v_j) \right) = f(x_i),
\end{equation}
where from now on we use the notation
$$
\sddp \uve(x_i;v_j) = \max{(\sdd \uve(x_i;v_j),0)}, \quad \sddm \uve(x_i;v_j) = -\min{(\sdd \uve(x_i;v_j),0)}.
$$
A similar definition was first proposed by Froese and Oberman in
\cite{FrOb1, Fr} for a finite difference method.
The key idea behind \eqref{E:2ScOp} is to enforce a suitable notion of
discrete convexity. To build intuition we explore this concept next.

\begin{Definition}[discrete convexity]\label{D:discrete-convexity}
  We say that $w_h \in \mathbb V_h$ is discretely convex  if
	$$
	\sdd w_h(x_i;v_j) \geq 0 \qquad \forall x_i \in \Nhi, \quad
        \forall v_j \in \mathbb S.
	$$
\end{Definition}
It is important to realize that this definition does not imply convexity in the usual
sense, which is rather tricky to achieve with piecewise polynomials
\cite{AguileraMorin09,NoZh2,Wachsmuth:16}. On the other hand, if
$w\in C^0(\overline{\Omega}_h)$ is convex, then its Lagrange interpolant
$\interp w$ satisfies $\interp w \ge w$, whence $\interp w$ is discretely
convex but not necessarily convex.

\begin{Lemma}[discrete convexity]\label{L:DisConv}	
If  $w_h \in \mathbb{V}_h$ satisfies
\begin{equation*} 
 T_{\varepsilon}[w_h](x_i) \geq 0 \quad \forall x_i \in \mathcal{N}_h^0,
\end{equation*}
then $w_h$ is \textit{discretely convex} and as a consequence
\begin{equation}\label{E:simpler-def}
T_{\varepsilon} [w_h](x_i)= \min_{\mathbf{v} \in \Vperp} \prod_{j=1}^d \sdd w_h(x_i;v_j),
\end{equation}
namely 
$$
\sddp w_h(x_i;v_j) = \sdd w_h(x_i;v_j),
\quad
\sddm w_h(x_i;v_j) =0
\quad\forall x_i\in \mathcal{N}_h^0,\quad\forall v_j\in\mathbb{S}.
$$
Conversely, if $w_h$ is discretely convex, then 
$ T_{\varepsilon}[w_h](x_i) \geq 0$ for all $x_i \in \mathcal{N}_h^0$.
\end{Lemma}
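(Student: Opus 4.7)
The plan is to argue both directions by directly unpacking the definition of $T_\varepsilon$ in \eqref{E:2ScOp}, with the forward direction handled by contradiction. The converse is immediate, so the real content is showing that $T_\varepsilon[w_h](x_i) \geq 0$ forces every second difference $\sdd w_h(x_i;v_j)$ to be nonnegative.

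For the forward direction, I would fix $x_i \in \mathcal{N}_h^0$ and suppose, for contradiction, that some direction $v^* \in \mathbb{S}$ satisfies $\sdd w_h(x_i;v^*) < 0$. I would then complete $v^*$ to an orthonormal basis $\mathbf{v}^* = (v^*, v_2^*, \ldots, v_d^*) \in \Vperp$ (always possible via, e.g., Gram-Schmidt on any complementary directions). For this particular basis, the key observation is that $\sddp w_h(x_i; v^*) = \max(\sdd w_h(x_i;v^*), 0) = 0$, which forces the product $\prod_{j=1}^d \sddp w_h(x_i;v_j^*) = 0$. Simultaneously, $\sddm w_h(x_i;v^*) = -\sdd w_h(x_i;v^*) > 0$, and since all $\sddm$ terms are nonnegative, $\sum_{j=1}^d \sddm w_h(x_i;v_j^*) \geq \sddm w_h(x_i;v^*) > 0$. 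Consequently, the quantity inside the $\min$ evaluated at $\mathbf{v}^*$ is strictly negative, and since $T_\varepsilon[w_h](x_i)$ is the infimum over all bases, $T_\varepsilon[w_h](x_i) < 0$, contradicting the hypothesis.

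Once discrete convexity is in hand, the identity $\sddp w_h(x_i;v_j) = \sdd w_h(x_i;v_j)$ and $\sddm w_h(x_i;v_j) = 0$ follow directly from the sign conventions $\sddp = \max(\cdot,0)$ and $\sddm = -\min(\cdot,0)$ together with $\sdd w_h \geq 0$. Substituting into \eqref{E:2ScOp} collapses the formula to \eqref{E:simpler-def}. The converse is a one-liner: if $w_h$ is discretely convex, then for every orthonormal basis $\mathbf{v}$ and each $j$, $\sddp w_h(x_i;v_j) = \sdd w_h(x_i;v_j) \geq 0$ and $\sddm w_h(x_i;v_j) = 0$, so the argument of the $\min$ in \eqref{E:2ScOp} is a product of nonnegative numbers and hence nonnegative for every basis, yielding $T_\varepsilon[w_h](x_i) \geq 0$.

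There is no genuine obstacle here; the only subtle point is making sure one can always extend a single unit vector $v^* \in \mathbb{S}$ to a full element of $\Vperp$, which is standard, and then exploiting the asymmetric way $\sddp$ (multiplicative) and $\sddm$ (additive) enter the operator to guarantee that a single negative second difference produces a strictly negative value of $T_\varepsilon$. This asymmetry is precisely what encodes discrete convexity into the equation \eqref{E:2ScOp}.
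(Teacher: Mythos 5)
Your proof is correct and rests on the same mechanism as the paper's: a negative second difference $\sdd w_h(x_i;v^*)<0$ annihilates the multiplicative $\sddp$ product while making the additive $\sddm$ sum strictly positive, so evaluating the bracketed quantity at any orthonormal basis containing $v^*$ forces $T_\varepsilon[w_h](x_i)<0$, contradicting the hypothesis. The paper phrases the same observation as a case analysis ($T_\varepsilon[w_h](x_i)>0$ versus $=0$) at a minimizing $d$-tuple, whereas you argue directly by contradiction at an arbitrary direction; both routes give $\sdd w_h(x_i;v_j)\ge 0$ for all $v_j\in\mathbb{S}$, and the converse is immediate in both.
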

\begin{proof}
We distinguish two cases depending on whether $T_{\varepsilon} [w_h](x_i)>0$ or not.
Let $\mathbf{v}=(v_j)_{j=1}^d\in\Vperp$ be a $d$-tuple that realizes
the mininum in the definition of $T_{\varepsilon} [w_h](x_i)$ and note
that
\[
\prod_{j=1}^d \sddp w_h(x_i;v_j)\ge 0,
\quad
\sum _{j=1}^d \sddm w_h(x_i;v_j)\ge 0.
\]

{\it Case 1:} $T_{\varepsilon} [w_h](x_i)>0$.
If the difference of these two quantities is positive, then so must be
the first one. This implies that each factor $\sddp w_h(x_i;v_j) > 0$,
whence the second term must vanish. This readily yields \eqref{E:simpler-def}.

{\it Case 2:} $T_{\varepsilon} [w_h](x_i)=0$.
If instead the difference of the two quantities above vanishes, then
there are two possible situations. If the first quantity is strictly positive,
then the argument in Case 1 implies that the second quantity vanishes, which is
a contradiction. Therefore, the alternative option is that both
quantities vanish, whence
\[
\sddm w_h(x_i;v_j) =  0 \quad\forall j
\quad\Rightarrow\quad
\sdd w_h(x_i;v_j)\ge 0 \quad\forall j.
\]
This again implies that $w_h$ is discretely convex along with
\eqref{E:simpler-def}. Since the converse is trivial the proof is complete.
\end{proof}

\subsection{Practical Two-Scale Method} \label{S:Practical}
%
The ideal two-scale method of \Cref{S:Ideal} leads to the notion
of discrete convexity and Lemma \ref{L:DisConv} but cannot be implemented,
because the minimum in (\ref{E:2ScOp}) entails infinitely many options
for $\mathbf v \in \Vperp$. To render the two-scale method practical,
we introduce a finite discretization $\St \subset \mathbb S$ of the unit sphere governed by the parameter $\theta$: given $v \in \mathbb S$, there
exists $v^{\theta} \in \St$ such that \looseness=-1
$$
	|v-v^{\theta}| \leq \theta.
$$
Likewise, we approximate the set $\Vperp$ of $d$-orthonormal bases
by the finite set $\Vperpt \subset \Vperp$: for any
$\mathbf v^{\theta} = (v_j^{\theta})_{j=1}^d \in \Vperpt$,  $v_j^\theta \in \St$ and
there exists $\mathbf v = (v_j)_{j=1}^d \in \Vperp$
such that $|v_j - v_j^{\theta}| \leq \theta $ for all $1 \leq j \leq d$
and conversely.

If $\varepsilon := (h,\delta,\theta)$, the practical two-scale method
now reads: seek $\uve\in\Vh$ such that $\uve(x_i)=g(x_i)$ for $x_i \in \Nhb$ and for $x_i \in \Nhi$
\begin{equation} \label{E:PracticalOp}
T_{\varepsilon}[\uve](x_i):=\min\limits_{\mathbf{v} \in \Vperpt} \left( \prod_{j=1}^d \sddp \uve(x_i;v_j) - \sum_{j=1}^d \sddm \uve (x_i;v_j) \right) = f(x_i).
\end{equation}
We observe that
if we relax Definition \ref{D:discrete-convexity} (discrete convexity) to be
\[
\sdd w_h(x_i;v_j) \geq 0 \qquad \forall x_i \in \Nhi, \quad
\forall v_j \in \St,
\]
then \Cref{L:DisConv} (discrete convexity) is still valid
and we can take
\begin{equation} \label{E:PracicalOpShort}
T_{\varepsilon}[w_h](x_i)=\min\limits_{\mathbf{v} \in \Vperpt}  \prod_{j=1}^d \sdd w_h(x_i;v_j),
\end{equation}
provided $T_{\varepsilon}[w_h](x_i)\ge0$, as is the case of $\uve$ in
\eqref{E:PracticalOp}.

We now show that \eqref{E:PracticalOp} is monotone
and prove a comparison principle 
for $f \geq 0$.
\begin{Lemma}[monotonicity] \label{L:Monotonicity} 
	Let $u_h,w_h \in \Vh$ be discretely convex. If $u_h - w_h$ attains a 
	maximum at an interior node $z \in \Nhi$, then 
	$$
	T_{\varepsilon} [w_h](z) \geq T_{\varepsilon} [u_h](z).
	$$
\end{Lemma}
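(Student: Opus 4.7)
The plan is to reduce both operators to the simplified form \eqref{E:PracicalOpShort} and then exploit the max property pointwise on each stencil.

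First, since $u_h$ and $w_h$ are both discretely convex, the relaxed version of \Cref{L:DisConv} noted after \eqref{E:PracicalOpShort} gives
\[
T_\varepsilon[u_h](z) = \min_{\mathbf v \in \Vperpt} \prod_{j=1}^d \sdd u_h(z;v_j),
\qquad
T_\varepsilon[w_h](z) = \min_{\mathbf v \in \Vperpt} \prod_{j=1}^d \sdd w_h(z;v_j),
\]
and each factor in both products is nonnegative. Thus it suffices to show that for every $\mathbf v = (v_j)_{j=1}^d \in \Vperpt$ one has the factorwise inequality $\sdd u_h(z;v_j) \le \sdd w_h(z;v_j)$, because products of ordered nonnegative tuples are ordered and the minimum preserves this order.

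The factorwise inequality is where the hypothesis on $z$ enters. The function $u_h - w_h$ lies in $\Vh$, hence is continuous and piecewise linear on $\overline{\Omega}_h$; the maximum of such a function is attained at a vertex, so the nodal maximum at $z$ is in fact a global maximum on $\overline{\Omega}_h$. By the definition of $\rho$ in \eqref{E:2Sc2Dif}, the two sample points $z \pm \rho\delta v_j$ both lie in $\overline{\Omega}_h$, so
\[
(u_h - w_h)(z \pm \rho\delta v_j) \le (u_h - w_h)(z).
\]
Adding these two inequalities, dividing by $\rho^2\delta^2 > 0$, and rearranging yields $\sdd u_h(z;v_j) \le \sdd w_h(z;v_j)$ for every $v_j \in \mathbb S$.

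Combining the two steps, for each $\mathbf v \in \Vperpt$ we have $0 \le \sdd u_h(z;v_j) \le \sdd w_h(z;v_j)$ for all $j$, hence $\prod_j \sdd u_h(z;v_j) \le \prod_j \sdd w_h(z;v_j)$, and taking the minimum over $\mathbf v \in \Vperpt$ on both sides gives $T_\varepsilon[u_h](z) \le T_\varepsilon[w_h](z)$. The only subtle point is the appeal to piecewise linearity to upgrade the nodal max to a global max on $\overline{\Omega}_h$, which is essential because $z \pm \rho\delta v_j$ need not be nodes of $\Th$; once this is noted, the rest is algebraic manipulation that uses discrete convexity exclusively to guarantee that the ordered factors are nonnegative.
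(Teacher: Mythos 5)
Your proof is correct and follows the same approach as the paper: reduce both sides to the simplified product form via discrete convexity, establish the factorwise inequality $\sdd u_h(z;v_j) \le \sdd w_h(z;v_j)$ by exploiting piecewise linearity to promote the nodal maximum at $z$ to a global maximum on $\overline{\Omega}_h$, and then conclude by monotonicity of ordered nonnegative products and the minimum. The only stylistic difference is that you argue via "a piecewise linear function attains its maximum at vertices" while the paper phrases it as the nodal inequality holding at the vertices of the simplex containing $z \pm \rho\delta v_j$, which propagates by linearity; these are the same observation.
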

\begin{proof}
		If $u_h - w_h$ attains a maximum at $z\in \Nhi$, then 
	$$
	u_h(z)-w_h(z) \geq u_h(x_i) - w_h(x_i) \quad \forall x_i \in \mathcal N_h. 
	$$
	For suitably
	chosen $0<\rho\le 1$, the points $z \pm \delta \rho v_j\in\overline{\Omega}_h$ satisfy 
	$$
	u_h(z) - w_h(z) \geq u_h(z \pm \delta\rho v_j) - w_h(z \pm
	\delta\rho v_j)
	\quad\forall v_j\in \St,
	$$
	because this relation holds at the vertices of the simplices where
	$z \pm \delta \rho v_j$ belong to and both $u_h$ and $w_h$ are piecewise linear.
	Hence, \eqref{E:2Sc2Dif} implies
	\begin{equation*}
	\sdd u_h(z;v_j)\leq \sdd w_h(z;v_j)\ \ \forall v_j \in \St.
	\end{equation*}
	Since discrete convexity of $u_h$ and $w_h$ implies \eqref{E:PracicalOpShort}, 
        the proof is complete.
\end{proof}

\begin{Lemma}[discrete comparison principle] \label{L:DCP}
	Let $u_h,w_h \in \Vh$ with $u_h \leq w_h$ on the boundary $\partial \Omega_h$ be such that 
	\begin{equation*}
	T_{\varepsilon}[u_h](x_i) \geq T_{\varepsilon}[w_h](x_i)  \geq 0 \ \ \ \forall x_i \in \Nhi. 
	\end{equation*}
	Then, $u_h \leq w_h$ in $\Omega_h$.
\end{Lemma}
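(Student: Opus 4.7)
The plan is to argue by contradiction using \Cref{L:Monotonicity}, but with a preliminary strictly discretely convex perturbation of $u_h$ to convert the non-strict monotonicity inequality into a strict one. Suppose $M := \max_{\overline{\Omega}_h}(u_h - w_h) > 0$. Since $u_h \le w_h$ on $\partial \Omega_h$, the maximum is attained at an interior node $z_0 \in \Nhi$. Applying \Cref{L:Monotonicity} directly at $z_0$ would give only $T_\varepsilon[w_h](z_0) \ge T_\varepsilon[u_h](z_0)$, which is consistent rather than contradictory with the hypothesis $T_\varepsilon[u_h] \ge T_\varepsilon[w_h]$; a strict improvement is needed.

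To manufacture strictness, I would fix $x_0 \in \mathbb R^d$ and $R>0$ with $\overline{\Omega}_h \subset \overline{B_R(x_0)}$ and set
\[
\phi_h := \interp\bigl(|x-x_0|^2 - R^2\bigr) \in \Vh,
\qquad
u_h^\sigma := u_h + \sigma \phi_h.
\]
Two properties of $\phi_h$ are crucial. First, $\phi_h \le 0$ on $\overline{\Omega}_h$, because its nodal values are nonpositive and a continuous piecewise linear function is bounded by its nodal values on every simplex. Second, $\sdd \phi_h(x_i;v) \ge 2$ for every $x_i \in \Nhi$ and $v \in \St$, because the Lagrange interpolant of a convex function lies above the function itself, and a direct calculation gives $\sdd(|\cdot-x_0|^2-R^2)(x_i;v) = 2$. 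Consequently, on $\partial \Omega_h$ we still have $u_h^\sigma \le u_h \le w_h$, and for $0<\sigma<M/R^2$ the value $u_h^\sigma(z_0)-w_h(z_0)$ remains positive, so $u_h^\sigma - w_h$ attains a positive maximum at some $z_\sigma \in \Nhi$.

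Both $w_h$ and $u_h^\sigma$ are discretely convex: the former by \Cref{L:DisConv} applied to the hypothesis $T_\varepsilon[w_h] \ge 0$, and the latter because $\sdd u_h^\sigma \ge \sdd u_h + 2\sigma \ge 2\sigma > 0$. Hence \Cref{L:Monotonicity} at $z_\sigma$ yields $T_\varepsilon[w_h](z_\sigma) \ge T_\varepsilon[u_h^\sigma](z_\sigma)$. Letting $\mathbf v^\star \in \Vperpt$ realize the minimum for $T_\varepsilon[u_h^\sigma](z_\sigma)$, and using the elementary bound $\prod_j (a_j + 2\sigma) \ge \prod_j a_j + (2\sigma)^d$ for $a_j \ge 0$, we obtain
\[
T_\varepsilon[u_h^\sigma](z_\sigma) \;\ge\; \prod_{j=1}^d \sdd u_h(z_\sigma;v_j^\star) + (2\sigma)^d \;\ge\; T_\varepsilon[u_h](z_\sigma) + (2\sigma)^d.
\]
Chaining with the hypothesis $T_\varepsilon[u_h] \ge T_\varepsilon[w_h]$ produces
$T_\varepsilon[w_h](z_\sigma) \ge T_\varepsilon[u_h^\sigma](z_\sigma) > T_\varepsilon[u_h](z_\sigma) \ge T_\varepsilon[w_h](z_\sigma)$,
the desired contradiction. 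The main technical point, and the only place that requires care, is the construction of $\phi_h$: it must simultaneously be nonpositive on $\overline{\Omega}_h$ (to preserve the boundary inequality under perturbation) and have a uniform strictly positive lower bound on $\sdd \phi_h$ in every discrete direction (to upgrade the monotonicity bound to a strict one). The shifted interpolated quadratic above achieves both, using the standard observation that interpolation of a convex function transfers its Hessian lower bound to the centered second difference operator $\sdd$.
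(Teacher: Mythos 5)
Your proof is correct and uses the same key device as the paper: perturbing $u_h$ by a small multiple of the Lagrange interpolant of a fixed shifted quadratic, which is nonpositive on $\overline{\Omega}_h$ and has $\sdd$ uniformly bounded below, so as to upgrade the weak comparison at a maximum point to a strict one. The organization differs slightly. The paper splits the argument into two explicit steps -- first proving the comparison under the strict hypothesis $T_\varepsilon[u_h] > T_\varepsilon[w_h]$ by a direct maximum-point contradiction, then reducing the weak case to the strict one by adding $\alpha q_h$ and letting $\alpha \downarrow 0$ at the end. You instead run a single contradiction: assume $\max(u_h - w_h) > 0$, perturb, invoke \Cref{L:Monotonicity} at the perturbed maximum point $z_\sigma$, and then close the loop with the explicit bound $\prod_j(a_j + c) \ge \prod_j a_j + c^d$ for $a_j \ge 0$. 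This avoids the limiting step and -- worth noting -- it makes the strictness argument more precise than the paper's own Step 2, where the displayed chain $\sdd u_h(x_i;v_j) + \alpha > \sdd w_h(x_i;v_j)$ is asserted but does not actually follow from the hypothesis $T_\varepsilon[u_h] \ge T_\varepsilon[w_h]$ (which compares minimized products, not individual second differences). Your product inequality is the correct way to justify $T_\varepsilon[u_h + \alpha q_h] > T_\varepsilon[w_h]$, so your write-up both matches the paper's strategy and patches a small imprecision in it. One minor presentational point: you should state explicitly that $u_h$ itself is discretely convex, which follows from $T_\varepsilon[u_h] \ge 0$ and \Cref{L:DisConv} and is what justifies both $\sdd u_h \ge 0$ (needed for the product bound) and $T_\varepsilon[u_h](z_\sigma) = \min_{\bv}\prod_j \sdd u_h(z_\sigma;v_j)$.
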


\begin{proof}
	Since $u_h,w_h \in \Vh$, it suffices to prove $u_h(x_i) \leq w_h(x_i)$
for all $x_i \in \Nhi$.
In view of \Cref{L:DisConv} (discrete convexity) and \eqref{E:PracicalOpShort},
we realize that both $u_h$ and $w_h$ are discretely convex and we
can rewrite the operator inequality as
follows:
\begin{equation*}
	\min_{\mathbf{v}  \in \Vperpt} \prod_{j=1}^d \sdd u_h(x_i;v_j)
        \geq \min_{\mathbf{v} \in \Vperpt} \prod_{j=1}^d \sdd
        w_h(x_i;v_j) \geq 0 \quad \forall x_i \in \Nhi.
\end{equation*}
The proof splits into two steps according to whether this inequality is strict or not.

\medskip
\textit{Step 1.}  We first consider the strict inequality
\begin{equation*}
\min_{\mathbf{v}  \in \Vperpt} \prod_{j=1}^d \sdd u_h(x_i;v_j) >
\min_{\mathbf{v} \in \Vperpt} \prod_{j=1}^d \sdd w_h(x_i;v_j)
 \ \ \ \forall x_i \in \Nhi. 
\end{equation*}
We assume by contradiction that there exists an interior node $x_k \in \Nhi$ such that
$$
u_h(x_k) - w_h(x_k) >0 
$$
and
$$
 u_h(x_k)-w_h(x_k) \geq u_h(x_i) - w_h(x_i) \quad \forall x_i \in \mathcal N_h. 
$$
Reasoning as in \Cref{L:Monotonicity} we obtain
$\sdd u_h(x_k;v_j)\leq \sdd w_h(x_k;v_j)$ for all $v_j \in \St$.
On the other hand, the original strict inequality at $x_i=x_k$ yields
\[ \min\limits_{\mathbf{v} \in \Vperpt} \prod_{j=1}^d \sdd
w_h(x_k;v_j) < \prod_{j=1}^d \sdd u_h(x_k;\overline{v}_j)
\]
for all possible directions $\overline{\bf  v}=(\overline{v}_j)_{j=1}^d\in\Vperpt$.
Choosing $\overline{\bf v}$ to be a $d$-tuple that realizes the
minimum of the left-hand side leads to a contradiction. 

\medskip
\textit{Step 2.}
We now deal with the non-strict inequality. We introduce the quadratic
strictly convex function $q(x) = \frac12\big(|x|^2-R^2\big)$, which satisfies
$q\le0$ on $\overline\Omega$ for $R>0$ sufficiently large and in particular $q\leq 0$ on $\partial \Omega_h$. Its Lagrange interpolant $q_h=\interp q$ is discretely convex and
\[
\sdd q_h(x_i;v_j) \geq \sdd q(x_i;v_j) = \partial^2_{v_jv_j} q(x_i) = 1
\quad\forall x_i \in \Nhi,
\quad\forall v_j\in \St.
\]
For arbitrary $\alpha>0$, the function $u_h+\alpha q_h$ satisfies
$u_h+\alpha q_h \le u_h\le w_h$ on $\partial\Omega_h$ and
\[
\sdd ( u_h + \alpha q_h)(x_i ; v_j) \geq \sdd u_h (x_i ;v_j)
+ \alpha > \sdd w_h (x_i ;v_j)
\quad\forall x_i \in \Nhi,
\quad\forall v_j\in \St,
\]
whence $T_{\varepsilon}[u_h+\alpha q_h](x_k) > T_{\varepsilon} [w_h](x_k)$.
Applying Step 1 we deduce
\[
u_h + \alpha q_h \le w_h \quad\forall \alpha>0.
\]
Taking the limit as $\alpha\to0$ gives the asserted inequality.
\end{proof}

\section{Existence and Uniqueness}\label{S:Exist-Uniq}
We now prove existence and uniqueness of a discrete solution 
$\uve \in \Vh$ of \eqref{E:PracticalOp}.

\begin{Lemma} [existence, uniqueness and stability]\label{L:Existence}
There exists a unique $\uve\in\Vh$ that solves the discrete Monge-Amp\`ere
equation \eqref{E:PracticalOp}. The solution $\uve$ is stable in the 
sense that $\|\uve\|_{L^{\infty}(\Omega)}$
does not depend on the parameters $\epsilon=(h,\delta,\theta)$ of the method.
\end{Lemma}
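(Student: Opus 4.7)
The plan is to split the claim into uniqueness, stability, and existence.

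Uniqueness is an immediate consequence of Lemma \ref{L:DCP}. If both $u_\ve$ and $\tilde u_\ve \in \Vh$ solve \eqref{E:PracticalOp}, they agree with $g$ on $\Nhb$ and satisfy $T_\ve[u_\ve] = T_\ve[\tilde u_\ve] = f \ge 0$ at every interior node. Applying Lemma \ref{L:DCP} with the two choices of roles forces $u_\ve \equiv \tilde u_\ve$.

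For stability I would bracket $\uve$ between two explicit, $\ve$-independent barriers. The constant function $u^+ \equiv \|g\|_{L^{\infty}(\partial \Omega)}$ serves as a supersolution: its centred second differences vanish identically, whence $T_\ve[u^+] \equiv 0 \le f$, and $u^+ \ge g$ on $\Nhb$. For a subsolution, I would use the Lagrange interpolant $u^- := \interp q$ of
\[
q(x) := \tfrac{c}{2}\bigl(|x-x_0|^2 - R^2\bigr) + \min_{\partial \Omega} g, \qquad c := \|f\|_{L^{\infty}(\Omega)}^{1/d},
\]
where $x_0$ is any interior point and $R$ is chosen so that $q \le \min_{\partial\Omega} g$ on $\overline{\Omega}$; in particular $u^- \le g$ on $\Nhb$. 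As in Step 2 of the proof of Lemma \ref{L:DCP}, convexity of $q$ together with $\interp q \ge q$ and $\interp q(x_i) = q(x_i)$ at interior nodes yields $\sdd u^-(x_i;v_j) \ge \partial^2_{v_j v_j} q = c$, so $u^-$ is discretely convex and $T_\ve[u^-] \ge c^d \ge f$. Once existence of $\uve$ is granted, two applications of Lemma \ref{L:DCP} deliver $u^- \le \uve \le u^+$, with bounds depending only on $\|f\|_{L^{\infty}(\Omega)}$, $\|g\|_{L^{\infty}(\partial\Omega)}$, and the diameter of $\Omega$.

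For existence I would deploy a discrete Perron-type argument. Consider
\[
\mathcal{S} := \{ w_h \in \Vh : w_h \le g \text{ on } \Nhb,\ T_\ve[w_h](x_i) \ge f(x_i) \text{ for every } x_i \in \Nhi \},
\]
which contains $u^-$ and, by Lemma \ref{L:DCP} against $u^+$, consists of functions bounded above by $u^+$. Define
\[
\uve(x_i) := \sup\{ w_h(x_i) : w_h \in \mathcal{S} \} \text{ for } x_i \in \Nhi, \qquad \uve(x_i) := g(x_i) \text{ for } x_i \in \Nhb,
\]
and extend piecewise linearly to obtain $\uve \in \Vh$. I would then verify (a) $\uve \in \mathcal{S}$ and (b) $T_\ve[\uve](x_i) = f(x_i)$ at every $x_i \in \Nhi$. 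Part (b) is the standard Perron bump argument: strict inequality $T_\ve[\uve](x_k) > f(x_k)$ at some interior node would, by continuity of $T_\ve$ in the nodal values, allow one to lift $\uve(x_k)$ by a small amount while still satisfying $T_\ve \ge f$ at every affected interior node, contradicting maximality. The main obstacle is part (a): the pointwise nodal supremum of elements of $\Vh$ need not lie in $\mathcal{S}$, because the $\min$-over-$\Vperpt$ of products structure of $T_\ve$ does not commute with suprema. I would address this by exploiting the finite-dimensional compactness of $\mathcal{S}$ (closed and bounded in $\Vh$ by continuity of $T_\ve$) combined with a monotone iteration anchored at $u^-$, building an increasing sequence in $\mathcal{S}$ whose limit is $\uve$ and showing, again by continuity of $T_\ve$ in the nodal values, that this limit still belongs to $\mathcal{S}$.
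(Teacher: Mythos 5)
Your argument is a genuine alternative to the paper's. The paper constructs an explicit initial subsolution $u_h^0 = \interp w + \interp q$ that matches $\interp g$ exactly on $\partial\Omega_h$; this requires invoking the existence of a convex solution $w$ of the homogeneous Monge--Amp\`ere Dirichlet problem with boundary data $g-q$ (cited from Guti\'errez). It then performs a node-by-node monotone lifting (raise $u_h(x_i)$ until $T_\ve = f$ there, cycle through the nodes, repeat), obtaining an increasing bounded sequence whose limit solves the discrete equation. Your route works instead with the Perron class $\mathcal{S}$ of discrete subsolutions satisfying $w_h \le g$ on $\Nhb$, uses the simpler shifted-quadratic subsolution $\interp q$ (thereby bypassing the homogeneous-MA existence result entirely), takes a nodal supremum, resets boundary values to $g$, and finishes with a bump argument. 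The stability barriers and the uniqueness-via-comparison step coincide with the paper's.

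The place that needs sharpening is exactly the one you flag: why does the nodal supremum over $\mathcal{S}$ lie in $\mathcal{S}$? The proposed fix --- ``finite-dimensional compactness combined with a monotone iteration anchored at $u^-$'' --- is not yet an argument; read literally, the monotone iteration is the paper's lifting scheme, and if you carry it out you no longer need the sup-over-class machinery at all. The observation that actually closes the gap within your framework, and which is implicit in the proof of Lemma~\ref{L:Monotonicity}, is that $\mathcal{S}$ is stable under nodal maxima: if $z_h$ is the nodal maximum of $u_h, w_h \in \mathcal{S}$ (extended piecewise linearly), then at any interior node $x_i$ with $z_h(x_i) = u_h(x_i)$ one has $z_h \ge u_h$ at every other node, hence $z_h(x_i \pm \hat\delta v_j) \ge u_h(x_i \pm \hat\delta v_j)$ because the barycentric weights are nonnegative, hence $\sdd z_h(x_i;v_j) \ge \sdd u_h(x_i;v_j) \ge 0$ for every $v_j\in\St$, and so $T_\ve[z_h](x_i) \ge T_\ve[u_h](x_i) \ge f(x_i)$. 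Combined with the closedness and boundedness of $\mathcal{S}$ in the finite-dimensional space $\Vh$, this gives, for each interior node $x_i$, a maximizer $w^{(i)} \in \mathcal{S}$ of $w_h \mapsto w_h(x_i)$, and then $\max_i w^{(i)} \in \mathcal{S}$ is precisely $\uve$. The same monotonicity-in-other-nodes fact is what makes resetting the boundary values to $g$ harmless. With that lemma recorded, your bump argument in part (b) goes through cleanly: raising $\uve(x_k)$ strictly decreases $\sdd\uve(x_k;\cdot)$, so $T_\ve[\uve](x_k)$ moves continuously through the slack, while $T_\ve$ at every other node can only increase; the lifted function stays in $\mathcal{S}$, contradicting maximality.
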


\begin{proof}
Since uniqueness is a trivial consequence of Lemma \ref{L:DCP}
(discrete comparison principle), we
just have to prove existence.
To this end, we construct a
monotone sequence of discrete convex functions $\left\{  u_h^k \right\}_{k=0}^\infty$,
starting with the initial iterate $u_h^0\in\Vh$ that satisfies
$u_h^0=\interp g$ on $\partial\Omega_h$ and
\[
T_{\varepsilon}[u_h^0](x_i) \geq f(x_i) \quad\forall x_i\in\Nhi.
\]

\textit{Step 1 : Existence of $u_h^0$.} 
We repeat the calculations of Step 2 in \Cref{L:DCP} (discrete comparison principle) for 
$$
q(x) = \frac{1}{2}\|f\|_{L^\infty(\Omega)}^{1/d}|x|^2
$$
to obtain that for $q_h=\interp q$ and for all $x_i \in \Nhi$
$$
T_\varepsilon[q_h](x_i) \geq \|f\|_{L^\infty(\Omega)} \geq f(x_i).
$$
We utilize the stability of $\interp q$ in $L^\infty(\Omega_h)$ to deduce
$$
\|q_h\|_{L^{\infty}(\Omega_h)} \leq C_R \ \|f\|_{L^\infty(\Omega)},
$$
where $C_R$ is a geometric constant that depends on the domain $\Omega$.

We next observe that the set of convex functions $w$ satisfying a
continuous Dirichlet boundary condition on a uniformly convex domain is
non-empty. The solution $w\in C^0(\overline{\Omega})$ of the
homogeneous Dirichlet problem \eqref{E:MA} is one such function \cite[Theorem 1.5.2]{Gut}. 
Let $w$ be convex and solve \eqref{E:MA} with $f=0$ and
Dirichlet condition $w = g - q$, whence $w_h := \interp w$ satisfies
\begin{equation*}
T_{\varepsilon}[w_h](x_i) \geq 0 \quad\forall x_i \in \Nhi.
\end{equation*}
and $w_h = \interp g - q_h$ on $\Nhb$.
We define the initial iterate to be
\[ u_h^0 := w_h + q_h \]
and note that $u_h^0$ is discretely convex and satisfies the
Dirichlet condition $u_h^0 = \interp g$ on $\partial\Omega_h$.
Since all the terms in
$T_{\varepsilon}[u_h^0](x_i)$ are non-negative, we also deduce
\[
T_{\varepsilon}[u_h^0](x_i) =  \min_{\mathbf{v}\in\Vperpt} \prod_{j=1}^d
\Big(\sdd w_h(x_i;v_j)+ \sdd q_h(x_i;v_j) \Big)
\geq f(x_i) \quad\forall x_i\in\Nhi.
\]
  
\textit{Step 2 : Perron Construction.}
We proceed by induction. Suppose that we have already a discretely
convex function $u_h^k\in\Vh$ that satisfies $u_h^k=\interp g$ on
$\partial\Omega_h$ and 
\begin{equation}\label{discrete-sub}
T_{\varepsilon}[u_h^k](x_i) \ge f(x_i) \quad\forall x_i\in\Nhi.
\end{equation}
We now construct $u_h^{k+1}\in\Vh$ such that $u_h^{k+1}\ge u_h^k$ in
$\Omega_h$, $u_h^{k+1} = \interp g$ on $\partial\Omega_h$ and
satisfies \eqref{discrete-sub}.
We consider all interior nodes in order and construct auxiliary
functions $u_h^{k,i-1}\in\Vh$ using the first $i-1$ nodes and starting
from $u_h^{k,0}:=u_h^k$ as follows. At $x_i\in\Nhi$ we check whether or not
$T^{\varepsilon}[u_h^{k,i-1}](x_i) > f(x_i)$. If so, we
increase the value of $u_h^{k,i-1}(x_i)$ and denote the resulting
function by $u_h^{k,i}$, until
\[
T_{\varepsilon} [u_h^{k,i}](x_i) = f(x_i).
\]
This is possible because the centered second differences
\eqref{E:2ScOp} are strictly decreasing with increasing central value
for all directions. Expression \eqref{E:2ScOp} also shows that
this process potentially increases the centered second differences at other nodes
$x_j \ne x_i$, whence
\[
T_{\varepsilon} [u_h^{k,i}](x_j) \ge T_{\varepsilon}[u_h^{k,i-1}](x_j) \ge f(x_j)
\quad\forall x_j\ne x_i.
\]
We repeat this process with the remaining nodes $x_j$ for $i<j\le N$,
and set $u_h^{k+1} := u_h^{k,N}$ to be the last intermediate
function. By construction, we clearly obtain
\[
T_{\varepsilon}[u_h^{k+1}](x_i) \ge f(x_i),
\quad
u_h^{k+1}(x_i)\ge u_h^k(x_i)
\quad\forall x_i\in\Nhi.
\]
Our construction preserves the boundary values
$u_h^{k+1}=\interp g$ on $\partial\Omega_h$ and enforces the relation
$u_h^{k+1}\ge u_h^k$ in $\Omega_h$ because both $u_h^{k+1}, u_h^k$ are
piecewise linear functions.
 
\textit{Step 3 : Bounds.}
If $b_h := \max_{x_i\in\Nhb} g(x_i)$, then we see that
$b_h\in\Vh$ and
\[
T_{\varepsilon}[b_h](x_i) = 0 \le f(x_i) \le
T_{\varepsilon}[u_h^k](x_i)
\quad\forall x_i\in\Nhi,
\quad\forall k\ge 0.
\]
We apply Lemma \ref{L:DCP} (discrete comparison principle)
to infer that $u_h^k\le b_h$ for all
$k\ge0$. On the other hand, since
$\|u_h^0\|_{L^\infty(\Omega_h)}$ is bounded uniformly in $h$ and
$u_h^0 \le u_h^k$, we deduce the uniform bound
\[
\| u_h^k\|_{L^\infty(\Omega)} \le \Lambda
\]
with $\Lambda>0$ independent of the discretization parameters
$h,\delta$ and $\theta$.

\textit{Step 4 : Convergence.}
The sequence $\{u_h^k(x_i)\}_{k=1}^\infty$ is monotone and bounded above for all
$x_i\in\Nhi$, and hence converges. The limit
\[
u_\varepsilon (x_i) = \lim_{k\to\infty} u_h^k(x_i)
\quad\forall x_i\in\Nhi
\]
defines $u_\varepsilon \in \Vh$ and satisfies $u_\varepsilon=\interp g$ on
$\partial\Omega_h$. It also satisfies the desired equality
\[
T_{\varepsilon}[\uve](x_i)=f(x_i)
\quad\forall  x_i \in \Nhi,
\]
since
$T_{\varepsilon}[\uve](x_i)=\lim_{k\to\infty}T_{\varepsilon}[u_h^k](x_i)\ge
f(x_i)$
and if the last inequality were strict, then Step 2 could be applied to
improve $\uve$. This shows existence of a discrete solution $\uve$ of
\eqref{E:2ScOp} as well as the uniform bound $\|\uve\|_{L^\infty(\Omega)} \le \Lambda$.
\end{proof}

\section{Consistency} \label{S:Consistency}
%
We now quantify the operator consistency error in terms of
H\"older regularity of $D^2 u$. 
We start with the definitions of $\delta$-interior region
\begin{equation}\label{Omega-delta}
\Omega_{h,\delta} = \left\{ T \in \mathcal{T}_h \ : \  {\rm dist}
(x,\partial \Omega_h) \geq \delta  \ \forall x \in T  \right\},
\end{equation}
and $\delta$-boundary region
$$
\omega_{h,\delta} = \Omega_h \setminus \Omega_{h,\delta}.
$$
Moreover, given a node $x_i \in \Nhi$ we denote by
\begin{equation}\label{E:Bi}
B_i := \cup \{\overline{T}: T\in\Th, \, \textrm{dist }(x_i,T) \le \hat\delta\}
\end{equation}
where $\hat\delta := \rho \delta$ with $0<\rho\le 1$ is the biggest number so that
$x_i\pm\hat\delta v_j\in\overline{\Omega}_h$ for all $v_j\in\St$.

\begin{Lemma} [consistency of $\sdd \mathcal I_hu $] \label{L:Consistency}
Let $u \in W^2_\infty(B_i)$, $\mathcal{I}_hu$ be its Lagrange
interpolant in $\Omega_h$, and $B_i$ be defined in \eqref{E:Bi}.
The following two estimates are then valid:
\begin{enumerate}[(i)]
\item 
For all $x_i  \in \Nhi$ and all  $v_j \in \St$, we have
$$
 \left|\sdd \mathcal{I}_hu (x_i;v_j) \right|\leq C  |u|_{W^2_{\infty}(B_i) } .
$$
\item
If in addition $u\in C^{2+k,\alpha}(B_i)$ for $k=0,1$ and $\alpha \in (0,1]$,
then for all $x_i \in \Nhi \cap \Omega_{h,\delta}$ and all $v_j \in \St$, 
we have
$$
 \left|\sdd \mathcal{I}_hu (x_i;v_j) - \frac{\partial^2 u}{\partial v_j^2}(x_i) \right| \leq C \left(  |u|_{C^{2+k,\alpha}(B_i)} \delta^{k+\alpha}+ |u|_{W^2_{\infty}(B_i)}  \frac{h^2}{\delta^2}  \right).
 $$
\end{enumerate}
In both cases $C$ stands for a constant independent of the two scales
$h$ and $\delta$, the parameter $\theta$ and $u$.

\end{Lemma}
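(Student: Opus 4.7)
The natural splitting
\[
\sdd \mathcal{I}_h u (x_i; v_j) = \sdd u(x_i; v_j) + \sdd (\mathcal{I}_h u - u)(x_i; v_j)
\]
separates a purely analytic ``consistency'' contribution coming from $u$ from an ``interpolation'' contribution driven by the pointwise error $\mathcal{I}_h u - u$. The three sample points $x_i$ and $x_i \pm \hat\delta v_j$ all lie in $B_i$ by the definition of the latter, so the regularity hypotheses on $u$ and local interpolation bounds both apply without leaving the region of smoothness.

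The interpolation term is the easier one. The standard $L^\infty$ estimate $\|\mathcal{I}_h u - u\|_{L^\infty(B_i)} \le C h^2 |u|_{W^2_\infty(B_i)}$ together with the triangle inequality and the factor $\rho^{-2}\delta^{-2}$ in the denominator of $\sdd$ directly produces
\[
\bigl|\sdd (\mathcal{I}_h u - u)(x_i; v_j)\bigr| \le C \frac{h^2}{\delta^2} |u|_{W^2_\infty(B_i)}.
\]
This single estimate accounts for the $h^2/\delta^2$ contribution in (ii), and in the regime $h \lesssim \delta$ relevant to the analysis it also bounds the interpolation part of (i) by $C|u|_{W^2_\infty(B_i)}$.

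The consistency term $\sdd u(x_i; v_j)$ I would handle by Taylor expanding the one-variable function $\phi(t) := u(x_i + t v_j)$ about $t = 0$ and evaluating at $t = \pm\hat\delta$. Part (i) then follows at once from a second-order Taylor expansion with integral remainder, which bounds the centered second difference of $\phi$ by $\hat\delta^2 |u|_{W^2_\infty(B_i)}$; division by $\hat\delta^2$ yields (i). For part (ii) I would expand $\phi$ to order $2+k$: the centered sum $\phi(\hat\delta) + \phi(-\hat\delta) - 2\phi(0)$ annihilates every odd-order Taylor coefficient by symmetry, leaves $\hat\delta^2 \partial^2_{v_j v_j} u(x_i)$ as the surviving main term, and pairs $\partial_{v_j}^{2+k} u$ at $\pm s v_j$ in the remainder. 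H\"older continuity of $\partial_{v_j}^{2+k} u$ bounds this pairing by $C s^\alpha |u|_{C^{2+k,\alpha}(B_i)}$; integration contributes $\hat\delta^{2+k+\alpha}$, and after division by $\hat\delta^2$ one reads off the advertised $\delta^{k+\alpha}$ remainder, with $\rho$-independence of constants following from $\rho \in (0,1]$.

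The principal technical point is precisely this cancellation of odd-order Taylor terms: it upgrades the consistency rate from $\delta^{\min(1, k+\alpha)}$ to the full $\delta^{k+\alpha}$, and it is crucial at $k = 1$, where the cubic term would otherwise cost a full power of $\delta$. Everything else is routine bookkeeping, and independence of the constants from $\theta$ is automatic since the whole argument is local and indifferent to which discrete direction $v_j \in \St$ is chosen.
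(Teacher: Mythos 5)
Your proof takes essentially the same route as the paper: decompose $\sdd \mathcal{I}_h u = \sdd u + \sdd(\mathcal{I}_h u - u)$, control the interpolation contribution by the standard $L^\infty$ interpolation error estimate, and control the consistency of $\sdd u$ by a Taylor expansion that exploits the symmetry of the centered difference together with H\"older continuity of $D^{2+k}u$. Parts (i), the cancellation of odd Taylor terms, and the $\delta^{k+\alpha}$ rate for $k=0,1$ all match the paper's Steps 1--3.

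One small imprecision is worth flagging. After noting that the denominator of $\sdd$ is $\rho^2\delta^2$, you then record the interpolation bound as $C\,h^2/\delta^2\,|u|_{W^2_\infty(B_i)}$ and justify the boundedness in part (i) by invoking ``the regime $h\lesssim\delta$.'' For part (ii), where $x_i\in\Omega_{h,\delta}$ forces $\rho=1$, the estimate $h^2/\delta^2$ is exact. But for part (i) an arbitrary $x_i\in\Nhi$ may have $\rho<1$, and the correct interpolation bound is $C\,h^2/\hat\delta^{\,2}$ with $\hat\delta=\rho\delta$; this can exceed $C\,h^2/\delta^2$, and boundedness is \emph{not} a consequence of $h\lesssim\delta$ alone. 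What rescues (i) is that shape regularity and the definition of $\rho$ (the largest factor keeping $x_i\pm\rho\delta v_j$ in $\overline\Omega_h$) guarantee $\hat\delta\ge Ch$, hence $h^2/\hat\delta^{\,2}\le C$. The paper makes exactly this observation at the end of its Step 3; your argument should cite $\hat\delta\ge Ch$ rather than $h\lesssim\delta$.
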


\begin{proof}
We split the proof into three steps.

{\it Step 1.} Let $x_i\in\Nhi$ and $v_j \in\St$. Since
\[
u(x_i+ \hat\delta v_j) - u(x_i) = \hat\delta \int_0^1 \nabla
u(x_i+t\hat\delta v_j) \cdot v_j \,dt,
\]
definition \eqref{E:2Sc2Dif} yields
\begin{align*}
  \sdd u(x_i;v_j)= \hat\delta^{-1}
  \int_0^1 \left( \nabla u(x_i+t \hat\delta v_j) - \nabla u(x_i-t \hat\delta v_j) \right) \cdot v_j \,dt
\end{align*}
Adding and subtracting $ \nabla u(x_i) \cdot v_j $ inside the
integral, we similarly arrive at
\begin{align*}
\sdd u(x_i;v_j) = \int_0^1 \int_0^1 t \ \left( D^2 u(x_i + st \hat\delta
v_j)
+ D^2u(x_i-st \hat\delta v_j) \right) : v_j \otimes v_j \,ds \,dt,
\end{align*}
which implies
\[
\left|\sdd u (x_i;v_j) \right|\leq |u|_{W^2_{\infty}(B_i) } .
\]

\medskip
{\it Step 2.} Let $x_i\in\Omega_{h,\delta}$ and assume that $u\in C^{2,\alpha}(B_i)$.
We prove the estimate
\[
\left|\sdd u(x_i;v_j) - \frac{\partial^2 u}{\partial v_j^2}(x_i) \right| \leq C |u|_{C^{2,\alpha}(B_i)} \ \delta^{\alpha}.
\]
Write $\sdd u(x_i;v_j) = I_1 + I_2$, where
$$ 
I_1= 2 \int_0^1 \int_0^1 t D^2u(x_i) : v_j \otimes v_j \,ds \,dt
= \frac{\partial^2 u}{\partial v_j^2}(x_i)
$$
and
$$
I_2= \int_0^1 \int_0^1 t \left( D^2 u(x_i + st \ \delta v_j) - 2
D^2u(x_i) + D^2u(x_i-st \ \delta v_j) \right) : v_j \otimes v_j \,ds \,dt.
$$
The fact that $u \in C^{2, \alpha}(B_i)$ gives
$$
 |D^2 u(x_i \pm st \delta v_j)- D^2u(x_i)| \leq C
 |u|_{C^{2,\alpha}(B_i)} \delta^{\alpha}, 
 $$
whence
$$
I_2 \leq C |u|_{C^{2,\alpha}(B_i)} \delta^{\alpha}.
$$
Combining $I_1$ and $I_2$, we deduce the asserted estimate for
$u \in C^{2,\alpha}(B_i)$ and $k=0$.
For $u \in C^{3,\alpha}(B_i)$, we exploit the symmetry of $I_2$
to express the integrand in terms of differences of $D^3u$ at points
$x_i\pm stz \, \delta v_j$ for $0<z<1$ and thus deduce
$$
I_2 \leq C |u|_{C^{3,\alpha}(B_i)} \delta^{1+\alpha}.
$$
This implies the estimate for $k=1$
$$
\left|\sdd u(x_i;v_j)-  \frac{\partial^2 u}{\partial
  v_j^2}(x_i)\right| \leq C |u|_{C^{3,\alpha}(B_i)} \delta^{1+\alpha}.
$$

{\it Step 3.}  We now study the effect of interpolation, for which it
is known that \cite{BrenScott}
$$
\|u - \mathcal I_hu\|_{\infty} \leq C \ |u|_{W^2_{\infty}(B_i)} h^2.
$$
Therefore, applying definition (\ref{E:2Sc2Dif}), we deduce for
$x_i\in\Omega_{h,\delta}$
$$
|\sdd (u-\mathcal I_h u)(x_i;v_j)| \leq C |u|_{W^2_{\infty}(B_i)} \frac{h^2}{\delta^2}.
$$
This completes the proof of (ii) for $k=0,1$. Otherwise, $\delta$
must be replaced by $\hat\delta = \rho\delta \ge Ch$ with $C>0$
depending only on shape regularity. Therefore, we see that
$h^2\hat\delta^{-2}\le C$, 
which combined with Step 1 yields the estimate in (i).
\end{proof}

We now extend the consistency analysis to the practical
two-scale operator $T_\epsilon$.

\begin{Lemma}[{consistency of $T_\varepsilon [\mathcal{I}_h u]$}] \label{L:FullConsistency}
Let $x_i \in \Nhi \cap \Omega_{h,\delta}$ and $B_i$ be defined as in
\eqref{E:Bi}. If $u \in C^{2+k,\alpha}(B_i)$  is convex with $0<\alpha\leq 1$ and
$k=0,1$, and $\mathcal I_h u $ is its piecewise linear interpolant, then 
\begin{equation}\label{E:FullConsistency}
	\left|  \det D^2u(x_i) - T_{\varepsilon}[\mathcal I_h u] (x_i)  \right| \leq C_1(d,\Omega,u) \delta^{k+\alpha} + C_2(d,\Omega,u) \left( \frac{h^2}{\delta^2} + \theta^2 \right),
\end{equation}
where
\begin{equation*}
	C_1(d,\Omega,u)= C |u|_{C^{2+k,\alpha}(B_i)}
        |u|_{W^2_{\infty}(B_i)}^{d-1},
        \quad
        C_2(d,\Omega,u) = C |u|_{W^2_{\infty}(B_i)}^d.
\end{equation*}
	If $x_i\in\Nhi$ and $u \in W^2_{\infty}(B_i)$, then (\ref{E:FullConsistency}) remains valid with $\alpha=k=0$ and $\Cka(\Bhi)$ replaced by $\Wti(\Bhi)$.
\end{Lemma}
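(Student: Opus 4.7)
The plan is to first reduce the operator to the simpler form \eqref{E:simpler-def} by exploiting convexity of $u$, then split the total error into (i) an optimization error between the continuous sphere $\Vperp$ and its discretization $\Vperpt$, and (ii) a centered-second-difference error at fixed directions, combining the pieces with the elementary product-difference inequality
$$
\Big|\prod_{j=1}^d a_j - \prod_{j=1}^d b_j\Big| \le M^{d-1}\sum_{j=1}^d |a_j - b_j|, \qquad M := \max_j \max(|a_j|,|b_j|).
$$

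First I would verify that $\mathcal{I}_h u$ is discretely convex. Since $u$ is convex, the nodal interpolant satisfies $\mathcal{I}_h u \ge u$ on $\Omega_h$ and $\mathcal{I}_h u(x_i)=u(x_i)$, hence for every $v_j \in \St$ one has $\sdd\mathcal{I}_h u(x_i;v_j)\ge \sdd u(x_i;v_j)\ge 0$. By \Cref{L:DisConv} this gives $T_\varepsilon[\mathcal{I}_h u](x_i)=\min_{\mathbf v\in\Vperpt}\prod_{j=1}^d \sdd\mathcal{I}_h u(x_i;v_j)$. Using \eqref{E:Det} together with $D^2u(x_i)\ge 0$, the minimum over $\Vperp$ equals $\det D^2 u(x_i)$ and is attained at an eigenbasis $\mathbf e=(e_j)_{j=1}^d$. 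I then decompose $\det D^2 u(x_i)-T_\varepsilon[\mathcal{I}_h u](x_i)=A+B$ with
$$
A=\min_{\mathbf v\in\Vperp}\prod_{j=1}^d \partial^2_{v_jv_j}u(x_i) - \min_{\mathbf v\in\Vperpt}\prod_{j=1}^d \partial^2_{v_jv_j}u(x_i),
$$
$$
B=\min_{\mathbf v\in\Vperpt}\prod_{j=1}^d \partial^2_{v_jv_j}u(x_i) - \min_{\mathbf v\in\Vperpt}\prod_{j=1}^d \sdd\mathcal{I}_h u(x_i;v_j).
$$

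For $B$, I apply \Cref{L:Consistency}(ii) to each factor, use the uniform bound $\max(|\partial^2_{v_jv_j}u|,|\sdd\mathcal{I}_h u|)\le C|u|_{W^2_\infty(B_i)}$ from \Cref{L:Consistency}(i), and plug into the product-difference inequality to obtain $|B|\le C_1(d,\Omega,u)\,\delta^{k+\alpha}+C_2(d,\Omega,u)\,h^2/\delta^2$. For $A$, since $\Vperpt\subset\Vperp$ we automatically have $A\le 0$, so it suffices to show $A\ge -C|u|_{W^2_\infty(B_i)}^d\theta^2$. Choosing $\mathbf e^\theta\in\Vperpt$ with $|e_j-e_j^\theta|\le\theta$ as supplied by the construction of $\Vperpt$, one has $\min_{\Vperpt}\prod_j\partial^2_{v_jv_j}u\le \prod_j (e_j^\theta)^TD^2 u(x_i) e_j^\theta$, so the task reduces to comparing this product with $\det D^2 u(x_i)$.

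The main obstacle is extracting the $\theta^2$ rate (as opposed to the naive $\theta$ rate) in the estimate of $A$. This is a Taylor-expansion argument exploiting that $\mathbf e$ is a critical configuration of $\mathbf v\mapsto\prod_j v_j^TD^2 u(x_i)v_j$ on the Stiefel manifold: writing $w_j=e_j^\theta-e_j$, the orthonormality constraint $|e_j+w_j|=1$ forces $2e_j^Tw_j+|w_j|^2=0$, and combined with $D^2 u(x_i)e_j=\lambda_j e_j$ this yields
$$
(e_j^\theta)^TD^2 u(x_i) e_j^\theta = \lambda_j + w_j^T\bigl(D^2 u(x_i)-\lambda_j I\bigr)w_j = \lambda_j + O\bigl(|u|_{W^2_\infty(B_i)}\,\theta^2\bigr),
$$
so the first-order term in $\theta$ cancels. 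Feeding these per-factor estimates back into the product-difference inequality closes the bound on $A$ and delivers the constants $C_1,C_2$ in the statement. The last assertion of the lemma, for $u\in W^2_\infty(B_i)$ and $x_i\in\Nhi$ (not necessarily in $\Omega_{h,\delta}$), follows by the same argument once \Cref{L:Consistency}(ii) is replaced by \Cref{L:Consistency}(i) and $\delta^{k+\alpha}$ is read as $1$.
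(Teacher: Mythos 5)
Your proof is correct and uses essentially the same ingredients as the paper: discrete convexity to reduce $T_\varepsilon$ to the product form, the per-factor consistency from Lemma~\ref{L:Consistency}, the telescoping product-difference inequality with uniform factor bounds from Lemma~\ref{L:Consistency}(i), and the orthonormality constraint $2e_j\cdot w_j+|w_j|^2=0$ combined with $D^2u(x_i)e_j=\lambda_j e_j$ to cancel the $O(\theta)$ term and get $O(\theta^2)$. The only difference is bookkeeping: you introduce the intermediate quantity $\min_{\Vperpt}\prod_j\partial^2_{v_jv_j}u(x_i)$ and bound $A$ (direction-discretization error) and $B$ (finite-difference error) separately, whereas the paper bounds the two signs of the overall difference separately by choosing different test directions in each; both organizations are sound and yield the same constants.
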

\begin{proof}
We recall that $\interp u$ is discretely convex, namely $\sdd \interp u (x_i,v_j)\ge0$
for all $x_i\in\Nhi$ and $v_j\in\St$, because $u$ is convex.
Therefore, in view of \Cref{L:DisConv} (discrete convexity),
the definition of $T_{\varepsilon}[\interp u]$ reduces to
	\begin{equation*} 
	T_{\varepsilon}[\interp u](x_i) = \min_{\mathbf v \in \Vperpt} \prod_{j=1}^d \sdd \interp u(x_i;v_j).
	\end{equation*}

\textit{Step 1.} Let $\mathbf v = (v_j)_{j=1}^d \in \Vperpt$ be the
$d$-tuple that realizes the above minimum. Applying
(\ref{E:Det}) to the determinant of the Hessian of $u$, we see that
$$
	\det D^2 u(x_i) - T_{\varepsilon}[\interp u] (x_i) \leq \prod_{j=1}^d \frac{\partial ^2 u}{\partial v_j^2}(x_i) - \prod_{j=1}^d \sdd \interp u(x_i;v_j).
$$
We now invoke \Cref{L:Consistency} (ii)
(consistency of $\sdd \interp u)$ to write 
$$
	\left| \frac{\partial^2 u}{\partial v_j^2}(x_i) - \sdd \interp
        u(x_i;v_j)  \right| \leq C |u|_{\Cka(\Bhi)}
        \delta^{k+\alpha} + C |u|_{\Wti(\Bhi)}
        \frac{h^2}{\delta^2},
$$
where $k=0,1$. Given the multiplicative structure above,
utilizing  \Cref{L:Consistency} (i) we deduce 
$$
\det D^2u(x_i) - T_{\varepsilon}[\mathcal I_h u] (x_i)  \leq C_1(d,\Omega,u) \delta^{k+\alpha} + C_2(d,\Omega,u)  \frac{h^2}{\delta^2},
$$
where $C_1$ and $C_2$ are defined above.

\medskip
\textit{Step 2.} We now choose $\mathbf{v}=(v_j)_{j=1}^d \in \Vperp$ to be the
$d$-tuple that realizes the minimum in (\ref{E:Det}) for
$\det D^2u(x_i)$. We can then write
\[
T_{\varepsilon}[\interp u](x_i) - \det D^2u(x_i) \le 
I_1 + I_2
\]
where
\begin{align*}
I_1 = \prod_{j=1}^d \sdd \interp u(x_i;\hat{v}_j) - \prod_{j=1}^d \frac{\partial^2 u}{\partial \hat{v}_j^2}(x_i),
\qquad
I_2 =  \prod_{j=1}^d \frac{\partial^2 u}{\partial \hat{v}_j^2}(x_i)  - \prod_{j=1}^d \frac{\partial^2 u}{\partial v_j^2}(x_i),
\end{align*}
and
$\hat{\mathbf{v}}=(\hat{v}_j)_{j=1}^d \in \Vperpt $ is an
approximation of $\mathbf{v}$ satisfying $|v_j - \hat{v}_j| \leq
\theta$ for all $1 \leq j \leq d$. The first term $I_1$ obeys a
similar estimate to Step 1. For the second term $I_2$ we notice that $\hat{v}_j = v_j + w_j$ with $|w_j| \leq \theta$, whence 
$$
\frac{\partial^2 u}{\partial \hat{v}_j^2}(x_i) = \hat{v}_j^T D^2u(x_i) \hat{v}_j = \frac{\partial^2 u}{\partial v_j^2}(x_i) + 2 w_j^T D^2u(x_i)v_j + w_j^T D^2u(x_i)w_j.
$$
Using that $\hat{v}_j = v_j + w_j$, we observe that 
$$
1= |\hat{v}_j |^2 = |v_j|^2 + 2 v_j \cdot w_j + |w_j|^2
\qquad\Rightarrow\qquad
|v_j \cdot w_j | = \frac{1}{2} |w_j|^2 \leq \frac{1}{2}\theta^2.
$$
Since $D^2 u(x_i) v_j=\lambda_j v_j$, we thus obtain
$$
\left| \frac{\partial^2 u}{\partial \hat{v}_j^2}(x_i) - \frac{\partial^2 u}{\partial v_j^2}(x_i)    \right| \leq C \ \theta^2 \ |u|_{\Wti(\Bhi)}
$$
as well as
$$
I_2 \leq C \ \theta^2 \ |u|_{\Wti(\Bhi)}^d.
$$
This proves \eqref{E:FullConsistency}.

The remaining statement for $u \in W^2_{\infty}(B_i)$ is a
simple consequence of \Cref{L:Consistency} (i) and the above 2-step argument.
\end{proof}

\begin{remark}[regularity]
We give sufficient conditions for the regularity of $u$ in Lemma
\ref{L:FullConsistency}:
if $0 < f_0 \leq f(x) \leq f_1$ for all
$x\in\Omega$ and $f\in C^{\alpha}(\overline{\Omega}), g\in
C^3(\overline{\Omega})$, and $\partial\Omega\in C^3$, then
$u\in C^{2,\alpha}(\overline{\Omega})$ \cite[Theorem 1.1]{TrudingerWang:08}.
In such a
case, there exist $0<\lambda \le \Lambda <\infty$ depending on
$f,g$ and $\Omega$ such that \cite[Theorem 2.10]{DePhilippisFigalli:14}
$$
\lambda I \leq D^2u(x) \leq \Lambda I  \quad \forall x \in \Omega.
$$
Since $\left| \frac{\partial^2 u}{\partial v_j^2}(x_i) \right| \leq \Lambda$, 
the constants $C_1$ and $C_2$ in Lemma \ref{L:FullConsistency}
could also be written
$$
C_1(d,\Omega,u)= C \Lambda^{d-1}|u|_{C^{2+k,\alpha}(B_i)} ,
\quad C_2(d,\Omega,u) = C \Lambda^{d-1} |u|_{W^2_{\infty}(B_i)}.
$$
\end{remark}

\section{Convergence}\label{S:Convergence}
%
Lemma \ref{L:FullConsistency} (consistency of $T_\ve[\interp u]$) shows
interior consistency at distance $\delta$ to $\partial\Omega_h$ for
$u\in C^2(\overline{\Omega})$; hence the
Barles-Souganidis theory \cite{BaSoug} does not apply directly,
as stated in \cite{FeJe:16}. We compensate with the fact that
$\interp u-\uve$ vanishes on $\partial\Omega_h$ and cannot grow faster than
$C\delta$ at distance $\delta$ to $\partial\Omega_h$. We make this
statement rigorous via a barrier argument similar to those in
\cite{FeJe:16,NoZh,NoZh2}. To handle the behavior of $u-\uve$ inside $\Omega_h$
we utilize \Cref{L:DCP} (discrete comparison principle) and
\Cref{L:FullConsistency} (consistency of $T_\ve[\interp u]$). In both cases we need the
solution to be $C^2(\overline{\Omega})$, which may in general be false
for the viscosity solution and thus requires a regularization
argument involving data $(f,g,\Omega)$. We discuss these topics in this
section and give a variation of the Barles-Souganidis approach as well.

\subsection{Barrier Functions}\label{S:barriers}

We now introduce two discrete barrier functions, one to deal with the
boundary behavior and the other one to handle the interior behavior.

\begin{Lemma}[discrete boundary barrier] \label{L:Barrier} Let $\Omega$ be uniformly convex and  $E>0$ be arbitrary. For each node $z \in \Nhi$ with ${\rm dist}(z,\partial \Omega_h) \leq \delta$, there exists a function $p_h\in\Vh$ such that $T_{\varepsilon}[p_h](x_i) \geq E$ for all $x_i \in \Nhi$, $p_h \leq 0$ on $\partial \Omega_h$ and
\[
|p_h(z)| \leq CE^{1/d} \delta 
\]
with $C$ depending on $\Omega$.
\end{Lemma}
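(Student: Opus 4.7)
My plan is to take $p_h$ to be the Lagrange interpolant of an explicit convex quadratic $p$ whose Hessian is the scaled identity $E^{1/d} I$, so that $\det D^2p = E$, and whose zero level set is a sphere enclosing $\overline\Omega$ that touches $\partial\Omega$ at a boundary point close to $z$. This tangency is exactly what produces a bound of order $E^{1/d}\delta$ at $z$, since $p$ has roughly linear decay away from the tangent point at the rate $E^{1/d} R$.

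\textbf{Geometric input.} First I would extract from the uniform convexity of $\Omega$ a radius $R = R(\Omega) > 0$ such that for every $z_b \in \partial\Omega$ there is a center $y \in \mathbb R^d$ with $|z_b - y| = R$ and $\overline\Omega \subset \overline{B_R(y)}$. This amounts to an ``enclosing ball'' tangent at every boundary point with uniform radius; it is the global consequence of the uniform lower bound on the principal curvatures of $\partial\Omega$. Given $z \in \Nhi$ with $\mathrm{dist}(z,\partial\Omega_h) \leq \delta$, since $\partial\Omega_h$ is polygonal with vertices in $\Nhb \subset \partial\Omega$, I pick a boundary node $z_b \in \Nhb$ with $|z - z_b| \leq \delta + Ch \leq C'\delta$ in the regime $h \lesssim \delta$ of interest, and apply the previous property at $z_0 = z_b$ to obtain a center $y$.

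\textbf{Construction and verification.} I then set
\[
p(x) := \tfrac12 E^{1/d}\bigl(|x-y|^2 - R^2\bigr), \qquad p_h := \interp p,
\]
so that $D^2 p = E^{1/d}I$, $\det D^2 p = E$, and $p \leq 0$ on $\overline\Omega \supset \overline{\Omega}_h$. Three verifications remain. (a) Since each $x_i \in \Nhb$ lies on $\partial\Omega \subset \overline{B_R(y)}$, $p(x_i) \leq 0$, and piecewise linearity of $p_h$ on the boundary simplices gives $p_h \leq 0$ on $\partial\Omega_h$. (b) Convexity of $p$ forces $p_h = \interp p \geq p$ in $\Omega_h$ with equality at nodes, so for every $x_i \in \Nhi$ and every $v_j \in \St$
\[
\sdd p_h(x_i;v_j) \geq \sdd p(x_i;v_j) = v_j^{T}D^2 p\, v_j = E^{1/d},
\]
whence $p_h$ is discretely convex and \Cref{L:DisConv} together with \eqref{E:PracicalOpShort} yield $T_\varepsilon[p_h](x_i) \geq (E^{1/d})^d = E$. (c) Since $z \in \Nhi$, $p_h(z) = p(z)$, and the triangle inequality $R - |z-y| \leq |z - z_b|$ combined with $R + |z-y| \leq 2R$ gives
\[
|p(z)| = \tfrac12 E^{1/d}(R - |z-y|)(R + |z-y|) \leq E^{1/d} R\,|z-z_b| \leq C E^{1/d}\delta,
\]
with $C$ depending only on $\Omega$.

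\textbf{Main obstacle.} The nontrivial point is the geometric input: translating uniform convexity into the existence of an enclosing tangent ball of uniform radius at every boundary point. After this is in place, everything reduces to routine exploitation of the fact that the Lagrange interpolant of a convex quadratic dominates it pointwise and reproduces its second differences at interior nodes, together with the triangle inequality for the quadratic decay of $p$ near the tangent point $z_b$.
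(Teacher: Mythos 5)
Your proof is correct and takes essentially the same approach as the paper: an enclosing tangent ball from uniform convexity, a quadratic barrier $p$ with Hessian $E^{1/d}I$ vanishing on that sphere, and the fact that $\interp p \geq p$ with equality at nodes to transfer $T_\varepsilon[p]=E$ to $T_\varepsilon[p_h]\geq E$. The only cosmetic difference is that you locate the tangent point by picking a nearby boundary node $z_b\in\Nhb\subset\partial\Omega$ directly, whereas the paper goes through the nearest point on $\partial\Omega_h$ and then projects to $\partial\Omega$; both give $|z-z_b|\lesssim\delta$ in the operative regime $h\lesssim\delta$.
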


\begin{proof}
Take $z_1 \in \partial \Omega_h$ such that
$|z-z_1|= {\rm dist}(z, \partial \Omega_h) \leq \delta$.
Upon extending the segment joining $z$ and $z_1$, we find $z_2 \in \partial\Omega$
that satisfies the upper bound $|z_2-z_1| \leq C_1 h$ because
$\Omega$ is uniformly convex and thus Lipschitz but not necessarily $W^2_\infty$.
This implies that for $z_0 \in \partial \Omega$ such that $|z-z_0| =
\textrm{dist}(z,\partial \Omega)$, we have $|z-z_0| \leq |z-z_2| \leq
\delta + C_1 h \le C_2\delta$. We now make a change of coordinates so that $z_0$ becomes the origin and $z=(0,\ldots,0,|z-z_0|)$. Since $\Omega$ is 
uniformly convex, it  lies inside the ball
$$
x_1^2+x_2^2+ \ldots+x_{d-1}^2 + (x_d-R)^2 \leq R^2, 
$$
where the radius $R$ depends on $\partial \Omega$
which is not necessarily $W^2_\infty$. Under this coordinate system, let $p(x)$ be the quadratic polynomial
$$
p(x) = \frac{E^{1/d}}{2} \left( x_1^2+x_2^2+\ldots+x_{d-1}^2+(x_d-R)^2-R^2 \right) 
$$
and $p_h=\interp p$ be its piecewise linear Lagrange interpolant in $\Omega_h$.
We note that $p \leq 0$ on $\overline \Omega$ yields
$p_h \leq 0$ on $\partial \Omega_h$.
Since $p$ is convex and $\interp p \geq p$, we infer that
$$ 
T_{\varepsilon}[p_h](x_i) \geq T_{\varepsilon}[p](x_i) = E \quad \forall \ x_i \in \Nhi,
$$
where the last equality is a consequence of $p$ being quadratic and 
$$
\sdd p(x_i;v_j) = \partial^2_{v_jv_j} p(x_i)= E^{1/d} \quad \forall \ v_j \in \St .
$$
Moreover, since $|z-z_0| \leq C_2\delta$, we deduce
$|p_h(z)| \leq C_{\Omega} E^{1/d}\delta$, as asserted.
\end{proof}

The following barrier function $q_h$ and corresponding statement have
  already been used in the proof of Lemma \ref{L:DCP} (discrete
  comparison principle).

\begin{Lemma}[discrete interior barrier] \label{L:BarrierInterior}
Let $\Omega$ be contained in the ball $B(x_0,R)$ of center $x_0$ and radius
$R$. If $q(x):= \frac12\big( |x-x_0|^2 - R^2 \big)$, then its
interpolant $q_h:=\interp q\in\Vh$ satisfies
\[
T_\ve[q_h](x_i) \ge 1\quad\forall x_i\in\Nhi,
\qquad
q_h(x_i) \le 0 \quad\forall x_i\in\Nhb.
\]
\end{Lemma}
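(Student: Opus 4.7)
The plan is to verify the two bulleted properties directly from the construction of $q$, borrowing essentially the computation already carried out in Step 2 of Lemma \ref{L:DCP} and repackaging it as a standalone statement.

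First I would handle the boundary bound $q_h \le 0$ on $\partial\Omega_h$. Since the excerpt stipulates $\Nhb \subset \partial\Omega$ and $\Omega \subset B(x_0,R)$, every boundary node $x_i \in \Nhb$ satisfies $|x_i - x_0| \le R$, hence $q(x_i) \le 0$. Because $q_h = \interp q$ is the Lagrange interpolant, $q_h(x_i) = q(x_i) \le 0$ at each $x_i \in \Nhb$.

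Next I would establish the interior bound $T_\ve[q_h](x_i) \ge 1$. The key observation is that $D^2 q = I$, so for any unit vector $v_j \in \St$ and any admissible step $\rho\delta$, the quadratic $q$ has the exact identity
\[
\sdd q(x_i;v_j) = \partial^2_{v_jv_j} q(x_i) = v_j^T I v_j = 1,
\]
since the centered second difference is exact on quadratics. Now since $q$ is convex and $q_h$ is its nodal interpolant, one has $q_h \ge q$ pointwise in $\Omega_h$ with equality at nodes. Inserting this into the definition \eqref{E:2Sc2Dif} at the node $x_i$ yields
\[
\sdd q_h(x_i;v_j) \ge \sdd q(x_i;v_j) = 1 \qquad \forall x_i\in\Nhi,\ \forall v_j\in\St.
\]
In particular $q_h$ is discretely convex, so by Lemma \ref{L:DisConv} the operator reduces to the product form \eqref{E:PracicalOpShort}, giving
\[
T_\ve[q_h](x_i) = \min_{\mathbf v \in \Vperpt} \prod_{j=1}^d \sdd q_h(x_i;v_j) \ge 1.
\]

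There is no real obstacle here: the statement is essentially a bookkeeping lemma that isolates the discrete-convex barrier $q_h$ from the proof of Lemma \ref{L:DCP}. The only mild point to be careful about is that the stepsize used in $\sdd$ is $\rho\delta$ rather than $\delta$, but since exactness on quadratics is independent of the stepsize and the comparison $q_h \ge q$ is pointwise, this causes no issue.
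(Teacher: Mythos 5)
Your proof is correct and matches the paper's intent: the paper does not give a standalone proof of Lemma~\ref{L:BarrierInterior}, noting only that the calculation was already carried out in Step~2 of Lemma~\ref{L:DCP}, which is exactly what you reproduce. Your argument—exactness of $\sdd$ on quadratics, $q_h = \interp q \geq q$ with nodal equality for the convex $q$, and then Lemma~\ref{L:DisConv} to reduce $T_\ve$ to the product form—is precisely the chain of reasoning the paper relies on, and you correctly observe that the $\rho\delta$ stepsize is immaterial since exactness holds for any stepsize.
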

%

\subsection{Approximation by Smooth Problems}\label{S:smooth-approx}

For data $f,g$ uniformly continuous in $\Omega$, $f\ge0$,
and $\Omega$ uniformly convex, the regularity $u\in C^2(\overline{\Omega})$
which would yield small interior consistency error is not guaranteed.
We thus embark on a regularization procedure similar to that used by
DePhilippis-Figalli \cite{DePhilippisFigalli:13} and Awanou
\cite{Awanou:15}. We start with a result about continuous dependence
on data for viscosity solutions.

\begin{Lemma}[continuous dependence on data] \label{L:ContDepSmooth}
Given $f_1,f_2\in C(\overline{\Omega})$, $f_1,f_2\ge0$, and $g_1,g_2\in
C(\partial\Omega)$, let $u_1,u_2\in C(\overline{\Omega})$ be the
corresponding convex viscosity solutions of \eqref{E:MA}. Then there
exists a constant $C$ depending on $\Omega$ such that
\[
\|u_1 - u_2\|_{L^{\infty}(\Omega)} \le C
\|f_1-f_2\|_{L^\infty(\Omega)}^{1/d} + \|g_1-g_2\|_{L^\infty(\partial\Omega)}.
\]
\end{Lemma}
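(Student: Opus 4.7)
The plan is a direct perturbation argument based on the matrix Minkowski inequality together with the standard Monge--Amp\`ere comparison principle. I would set $\alpha:=\|g_1-g_2\|_{L^\infty(\partial\Omega)}$ and $\beta:=\|f_1-f_2\|_{L^\infty(\Omega)}^{1/d}$, and by symmetry reduce to proving the one-sided bound $u_1-u_2\le\alpha+C\beta$ in $\Omega$, since the reverse inequality follows by interchanging the roles of $u_1$ and $u_2$.

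To establish the one-sided bound, I would fix a ball $B(x_0,R)\supset\overline{\Omega}$ and introduce the quadratic barrier $\phi(x):=\frac12(|x-x_0|^2-R^2)$, which satisfies $\phi\le 0$ on $\overline{\Omega}$, $|\phi|\le R^2/2$, and $D^2\phi=I$. Define the perturbed function
\[
V:=u_1-\alpha+\beta\phi,
\]
which is convex as a sum of two convex functions. Minkowski's determinant inequality $(\det(A+B))^{1/d}\ge(\det A)^{1/d}+(\det B)^{1/d}$ for symmetric positive semi-definite matrices $A,B$, interpreted in the viscosity sense, yields
\[
\det D^2V\ge\bigl(f_1^{1/d}+\beta\bigr)^d\ge f_1+\beta^d\ge f_2\quad\text{in }\Omega,
\]
so $V$ is a convex viscosity subsolution of the Monge--Amp\`ere equation with right-hand side $f_2$. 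On the boundary, $\phi\le 0$ together with $g_1-g_2\le\alpha$ give $V=g_1-\alpha+\beta\phi\le g_2=u_2$. Applying the classical Monge--Amp\`ere comparison principle \cite{Gut} to the subsolution $V$ and the solution $u_2$ of $\det D^2u_2=f_2$, I obtain $V\le u_2$ in $\Omega$, and unfolding the definition of $V$ together with $-\phi\le R^2/2$ gives
\[
u_1\le u_2+\alpha-\beta\phi\le u_2+\alpha+\tfrac{R^2}{2}\,\beta,
\]
which is the desired one-sided bound with $C=R^2/2$ depending only on $\Omega$.

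The main technical point is promoting the matrix Minkowski inequality to the viscosity level, since $D^2u_1$ exists only weakly. The clean route is to observe that $\beta\phi-\alpha$ is smooth, so any $C^2$ function $\psi$ touching $V$ from above at an interior point $x_*$ corresponds, via $\tilde\psi:=\psi-\beta\phi+\alpha$, to a $C^2$ function touching $u_1$ from above at $x_*$; the convex viscosity subsolution property of $u_1$ gives $\det D^2\tilde\psi(x_*)\ge f_1(x_*)$ with $D^2\tilde\psi(x_*)\ge 0$, and the matrix Minkowski inequality applied to $D^2\psi(x_*)=D^2\tilde\psi(x_*)+\beta I$ then delivers the required bound. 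Alternatively, since for continuous $f$ the convex viscosity solutions coincide with Alexandrov solutions, one can apply Minkowski directly at the level of Monge--Amp\`ere measures. The remaining ingredients---the enclosing ball and the Monge--Amp\`ere comparison principle for convex viscosity solutions---are classical.
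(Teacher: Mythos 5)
Your proof is correct and follows essentially the same strategy as the paper's: perturb one solution by a quadratic barrier scaled by $\|f_1-f_2\|^{1/d}$ and shifted by $\|g_1-g_2\|$, show via a test-function argument that the result is a convex viscosity subsolution with the other datum, and invoke the Monge--Amp\`ere comparison principle. The only cosmetic differences are that you perturb $u_1$ rather than $u_2$, and you invoke the full Minkowski determinant inequality together with $(a+b)^d\ge a^d+b^d$, whereas the paper uses directly the (equivalent for this purpose) superadditivity $\det(A+B)\ge\det A+\det B$ for PSD matrices.
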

\begin{proof}
Let $q\le 0$ be the barrier function of Lemma \ref{L:BarrierInterior}
(discrete interior barrier) and
$F:=\|f_1-f_2\|_{L^\infty(\Omega)}^{1/d}$, $G:=\|g_1-g_2\|_{L^\infty(\partial\Omega)}$.
We consider the auxiliary function
\[
u_1^- := u_2 +  F q - G,
\]
which is a convex viscosity subsolution of \eqref{E:MA} with data
$(f_1,g_1)$. To prove this, let $\phi\in C^2(\Omega)$ and
$x_0\in\Omega$ be a point where $u_1^--\phi$ attains a maximum. This
implies that $u_2 - \big(\phi -  F q + G  \big)$ attains also a maximum
at $x_0$. Since $u_2$ is a viscosity subsolution of \eqref{E:MA},
and $D^2 q(x_0)=I$ is the identity matrix, we deduce
\[
\det \big(D^2\phi (x_0) - F I \big) \ge f_2(x_0) \ge 0.
\]
Formula \eqref{E:Det} for two symmetric positive
semi-definite matrices $A, B$ easily implies
\[
\det (A+B) \ge \det (A) + \det(B).
\]
Using this expression for $A = D^2\phi (x_0) - F I$ and
$B = F I$ we obtain
\[
\det (D^2\phi(x_0)) \ge f_2(x_0) + F^d = f_2(x_0) +
\|f_1-f_2\|_{L^\infty(\Omega)} \ge f_1(x_0).
\]
In addition, since $q \le 0$ in $\Omega$, the function $u_1^-$ satisfies
on $\partial\Omega$
\[
u_1^- \le u_2 - G = g_2 - \|g_1-g_2\|_{L^\infty(\partial\Omega)} \le g_1.
\]
These two properties of $u_1^-$ imply that $u_1^-$ is a viscosity
subsolution of \eqref{E:MA} with data $(f_1,g_1)$. Since $u_1^-$ is
also convex, the comparison principle for \eqref{E:MA} gives
\[
u_1^- \le u_1 \quad \Rightarrow \quad
u_2 - u_1 \le -F q + G.
\]
We similarly prove the reverse inequality, thus obtaining the
desired estimate.
\end{proof}

We stress the monotonicity estimate
\[
f_1 \ge f_2\ge 0, \quad g_1 \le g_2
\qquad\Rightarrow\qquad
u_1 \le u_2,
\]
which is a consequence of $u_1$ being a convex subsolution of \eqref{E:MA}
with data $(f_2,g_2)$.

Using the above result, we now show that we can approximate a viscosity 
solution $u$ of \eqref{E:MA} by regular (classical) solutions $u_n$.

\begin{Lemma}[approximation of viscosity solutions by smooth
    solutions] \label{L:Approximation}
Let $\Omega$ be uniformly convex, $f, g$ be uniformly
continuous in $\Omega$, $f \geq 0$,
and $u$ be the viscosity solution of \eqref{E:MA} with data $(f,g,\Omega)$.
Then, there exist a decreasing sequence of uniformly convex and smooth domains
$\Omega_n$ converging to $\Omega$ in the sense that the Hausdorff
distance $\textrm{dist}_H(\Omega_n,\Omega)\to0$,
a decreasing sequence of smooth functions $f_n >0$ such that $f_n \to f$
uniformly in $\Omega$, a sequence of smooth functions $g_n$ such that
$g_n \to g$ uniformly in $\Omega$, and a sequence of smooth classical
solutions $u_n$ of \eqref{E:MA} with data $(f_n,g_n,\Omega_n)$
such that $u_n\to u$ uniformly in $\Omega$ as $n\to\infty$.
\end{Lemma}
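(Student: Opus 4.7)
The plan is to construct the approximating problems in three stages: build smooth data $(f_n,g_n,\Omega_n)$, invoke classical Monge-Amp\`ere existence theory to obtain $u_n$, and pass to the limit by applying \Cref{L:ContDepSmooth} on the common domain $\Omega$. The strategy follows closely the regularization used by De Philippis-Figalli and Awanou cited in the introduction.

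First I would produce $\Omega_n$ as a decreasing family of smooth uniformly convex domains with $\Omega_n \supset \Omega$ and $\textrm{dist}_H(\Omega_n,\Omega) \to 0$: starting from a convex defining function $\varphi$ of $\Omega$, the sets $\{\varphi * \psi_{\eta_n} < \tau_n\}$ with $\eta_n, \tau_n \downarrow 0$ chosen appropriately are uniformly convex and nested, inheriting the uniform convexity constant of $\Omega$. Next I would extend $f$ and $g$ continuously to a neighborhood of $\overline{\Omega_1}$ by Tietze, preserving their moduli of continuity, and set
\[
f_n := f * \phi_{\eta_n} + \epsilon_n, \qquad g_n := g * \phi_{\eta_n},
\]
where $\phi_{\eta_n}$ is a standard mollifier. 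Choosing $\epsilon_n \downarrow 0$ larger than the modulus of continuity of $f$ at scale $\eta_n$ makes $f_n > 0$ and renders $\{f_n\}$ monotonically decreasing to $f$; by construction $g_n \to g$ uniformly as well.

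For the existence of $u_n$ I would appeal to the classical theory for the Dirichlet problem of the Monge-Amp\`ere equation (Caffarelli-Nirenberg-Spruck, Trudinger-Wang), which yields a unique convex classical solution $u_n \in C^{2,\alpha}(\overline{\Omega_n})$ since $\Omega_n$ is smooth and uniformly convex, $f_n$ is smooth and strictly positive, and $g_n$ is smooth. For the convergence, since $\Omega \subset \Omega_n$, the restriction $u_n|_{\overline{\Omega}}$ is a viscosity solution of \eqref{E:MA} on $\Omega$ with data $(f_n|_\Omega, u_n|_{\partial\Omega})$, while $u$ solves it with $(f,g)$. \Cref{L:ContDepSmooth} therefore yields
\[
\|u_n - u\|_{L^\infty(\Omega)} \le C\|f_n - f\|_{L^\infty(\Omega)}^{1/d} + \|u_n - g\|_{L^\infty(\partial\Omega)}.
\]
The first term vanishes by construction.

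The main obstacle is the boundary term: $u_n$ equals $g_n$ on $\partial\Omega_n$ rather than on $\partial\Omega$, so I must establish a modulus of continuity for $u_n$ up to $\partial\Omega_n$ that is uniform in $n$. I would construct two-sided barriers at each point $x_0 \in \partial\Omega_n$: an upper barrier of the paraboloid type used in the proof of \Cref{L:Barrier}, whose opening depends only on $\|f_n\|_{L^\infty} \le \|f\|_{L^\infty} + \epsilon_1$ and the uniform convexity constant of $\Omega_n$ (controlled uniformly in $n$), and a lower barrier obtained from a supporting affine function at $x_0$ adjusted by the ABP-type $L^\infty$ bound on $u_n$. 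Combining this uniform boundary modulus with $\textrm{dist}_H(\Omega_n,\Omega)\to 0$ and the uniform convergence $g_n \to g$ yields $\sup_{x \in \partial\Omega} |u_n(x) - g(x)| \to 0$, closing the argument.
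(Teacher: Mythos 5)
Your overall strategy (Blocki-type domain approximation, data mollification with an additive shift to force positivity and monotonicity, classical Caffarelli--Nirenberg--Spruck/Trudinger--Wang existence, then \Cref{L:ContDepSmooth} on the common domain $\Omega$) matches the paper's proof. However, there is a genuine gap in your treatment of the boundary term $\|u_n - g\|_{L^\infty(\partial\Omega)}$.

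You claim that the barriers give a \emph{uniform in $n$} boundary modulus of continuity for $u_n$, with the paraboloid opening depending ``only on $\|f_n\|_{L^\infty}$ and the uniform convexity constant of $\Omega_n$.'' That is not true. The barrier $b_n^\pm = \pm p + g_n(y) + \nabla g_n(y)(x-y)$ must sandwich $g_n$ on $\partial\Omega_n$, and since $g_n$ deviates quadratically from its tangent plane by an amount $\tfrac12\|g_n\|_{W^2_\infty(\Omega_n)}|x-y|^2$, the paraboloid opening $E^{1/d}$ must also dominate $\|g_n\|_{W^2_\infty(\Omega_n)}$. Because $g_n$ is only produced by mollification of a merely continuous $g$, this quantity blows up as $n\to\infty$ (for Lipschitz $g$ one has $\|g_n\|_{W^2_\infty}\sim \rho_n^{-1}$, and worse for general continuous $g$). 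Consequently the boundary estimate has the form
\[
|u_n(z) - g(z)| \le \sigma(\delta_n) + \sigma(\rho_n) + C_n\,\delta_n,
\qquad z\in\partial\Omega,
\]
with $C_n\to\infty$, and this does \emph{not} vanish automatically. The paper closes this by sequencing the parameter choices: first pick $\rho_n$ to make $\sigma(\rho_n)$ small, which fixes $g_n$ and hence $C_n$, and only then choose the domain scale $\delta_n$ small enough (depending on $C_n$) so that $C_n\delta_n$ is small. Without this decoupling your argument does not close; you should not assert a boundary modulus independent of $n$.

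Two smaller issues: (i) your lower barrier ``a supporting affine function at $x_0$ adjusted by an ABP bound'' does not obviously satisfy $\le g_n$ on $\partial\Omega_n$, since $g_n$ need not be convex; the paper uses a quadratic lower barrier for the same reason the upper one is quadratic. (ii) Your domain construction $\{\varphi*\psi_{\eta_n} < \tau_n\}$ needs justification that uniform convexity is preserved with a uniform constant; the Blocki result the paper cites is a cleaner route and directly yields smooth uniformly convex exhausting domains.
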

\begin{proof}
We prove the result in four steps.

\smallskip
\textit{Step 1: Domain Approximation.}
According to \cite{Blocki} there is a sequence of smooth and uniformly
convex domains $\widetilde{\Omega}_n\subset\Omega$ that increase to $\Omega$
in the sense that the Hausdorff distance
$\textrm{dist}_H(\widetilde{\Omega}_n,\Omega)\to0$. Since $\Omega$ is
convex, it is star-shaped with respect to any of its points. Let's
assume that the origin is contained in $\Omega$ and dilate the domains
$\widetilde{\Omega}_n$ so that the ensuing domains $\Omega_n$ satisfy:
$$
\Omega\subset\Omega_n\subset\Omega_m \quad m\le n;
\qquad
\textrm{dist}_H(\Omega_n,\Omega) \to 0 \quad n\to\infty.
$$
The domains $\Omega_n$ inherit the regularity of
$\widetilde{\Omega}_n$ as well as their uniform convexity. Given
$\delta_n\to0$ as $n\to\infty$, to be chosen later in Step 4, we
relabel $\Omega_n$ to be an approximate smooth domain so that
$\textrm{dist}_H (\Omega_n,\Omega)\le \delta_n$.

\medskip
\textit{Step 2: Data Regularization.}
Let $\widetilde{\Omega}$ be an auxiliary domain such that
$\Omega_n\subset\widetilde{\Omega}$ for all $n$. We now construct a
sequence $(f_n,g_n)$ of smooth functions defined in $\widetilde{\Omega}$
that converge uniformly in $\Omega$ to $(f,g)$. We first extend
$(f,g)$ to $\widetilde{\Omega}$ and let $\sigma(t)$ be the modulus of
continuity in $\widetilde{\Omega}$ for both $(f,g)$ \cite[Theorem 2.1.8.]{Eng}:
$$
|f(x)-f(y)|, |g(x)-g(y)| \le \sigma (|x-y|)
\quad
\forall x,y \in \widetilde{\Omega}.
$$
Let $\rho < \textrm{dist}_H(\Omega_n,\widetilde{\Omega})$ and let $\phi_{\rho} \geq 0$ 
be a standard smooth mollifier function with support in $B(0,\rho)$.
We have for $f_{\rho} = f * \phi_{\rho}$ that
$$
\begin{aligned}
|f_{\rho}(x) - f(x)| &=  \left| \int_{\widetilde\Omega} (f(x-y)-f(x))
\ \phi_{\rho}(y) \ dy  \right| \leq \sigma(\rho)
\quad\forall x\in\Omega_n
\end{aligned}
$$
because $\phi_{\rho}$ integrates to one. This implies that
$$
\begin{aligned}
\widetilde{f}_\rho(x) &:= f_{\rho}(x) + 2\sigma(\rho)
\geq f(x) - \sigma(\rho) + 2\sigma(\rho)
= f(x) +\sigma(\rho) > 0
\quad\forall x \in \Omega_n.
\end{aligned}
$$
We now take $\rho_1 \le \rho_2$ and observe that for all $x \in \Omega_n$
$$
\begin{aligned}
\widetilde{f}_{\rho_1}(x) &- \widetilde{f}_{\rho_2}(x)
= \big(f_{\rho_1}(x) +2\sigma(\rho_1)\big) - \big(f_{\rho_2}(x)+2\sigma(\rho_2)\big)
\\
& \leq f(x) +\sigma(\rho_1) +2\sigma(\rho_1) -f(x) +\sigma(\rho_2) -2\sigma(\rho_2)
=3\sigma(\rho_1) -\sigma(\rho_2) \le 0,
\end{aligned}
$$
if $\sigma(\rho_1) \leq \frac{\sigma(\rho_2)}{3}$.
We thus choose $\rho_n$ such that $\sigma_n = \sigma(\rho_n) = 4^{-n}$ and define
$f_n := \widetilde{f}_{\rho_n}$,
which is a strictly positive and decreasing 
sequence of functions satisfying the error estimate
\begin{equation}\label{error-f}
\sigma_n \le f_n(x) -f(x) \le 3 \sigma_n
\quad\forall \, x\in\Omega_n.
\end{equation}
Similarly, we regularize $g$ by convolution $g_\rho=g *\phi_\rho$ and
define $g_n := g_{\rho_n}$ to obtain \looseness=-1
\begin{equation}
\|g-g_n\|_{L^\infty(\Omega_n)} \le \sigma_n.
\end{equation}

\textit{Step 3: Boundary Behavior.}
Let $u_n$ be the smooth
classical solution of \eqref{E:MA} with data $(f_n,g_n,\Omega_n)$,
which satisfies $u_n\in C^{2,\alpha}(\Omega_n)$ with norms depending
on $n$ but uniform $\alpha$; this is possible because $(f_n,g_n,\Omega_n)$ are
smooth, $\Omega_n$ is uniformly convex, and $f_n>0$
\cite{CaNiSp} \cite[Theorem 1.1]{TrudingerWang:08}.

We now compare $g$ and $u_n$ at $z\in\partial\Omega$ without invoking any
regularity of $u_n$ but rather using a barrier argument. We start with
$g$: if $y\in\partial\Omega_n$ is the closest point to $z$, then
$|z-y|\le\delta_n$ and
$$
|g(z) - g(y)| \le \sigma(|z-y|) \le \sigma(\delta_n).
$$
On the other hand, we know that
$$
|g(y) - g_n(y)| \le \sigma(\rho_n).
$$
Let $p$ be the quadratic barrier function introduced in the proof of
\Cref{L:Barrier}, but now associated with $\Omega_n$ and $y\in\partial\Omega_n$.
We consider the (lower) barrier function
$$
b_n^-(x):= p(x) + g_n(y) + \nabla g_n(y) (x-y),
$$
which satisfies
$$
\det D^2 b_n^- = \det D^2 p \geq f_n \quad\textrm{in }\Omega_n
$$
for $E > \|f\|_{L^\infty(\widetilde{\Omega})}$
because $b_n^-$ is a linear correction of $p$. We assert that 
$b_n^- \leq g_n$ on $\partial\Omega_n$ provided $E$ also satisfies
$E \ge C \|g_n\|_{W^2_\infty(\Omega_n)}$ where $C$ depends on the
uniform convexity of $\Omega$. If this is true, then applying the
comparison principle \cite[Theorem 1.4.6]{Gut} to the smooth functions $b_n^-$ and $u_n$ with data $(f_n,g_n,\Omega_n)$ yields
$$
b_n^-(x) \le u_n(x) \quad\forall \, x\in \Omega_n.
$$
Taking $x=z$ and making use of the definition of $b_n^-$ results in
$$
-C E^{1/d} |z-y| + g_n(y) + \nabla g_n(y) (z-y) \le u_n(z),
$$
whence
$$
u_n(z) - g_n(y) \ge - C_n |y-z| \ge -C_n \delta_n.
$$
Similarly, upon letting $b_n^+(x) := -p(x) + g_n(y) + \nabla g_n(y)(x-y)$
be an upper barrier function, the preceding argument also shows
$$
u_n(z) - g_n(y) \le C_n |y-z| \le C_n\delta_n,
$$
whence the triangle inequality implies that for all $z\in\partial\Omega$
\begin{equation}\label{error-g}
|g(z) - u_n(z)| \le \sigma(\delta_n) + \sigma (\rho_n) + C_n \delta_n,
\end{equation}
where the constant $C_n$ depends on $g_n$ but is independent of $u_n$.
It remains to show
$$
b_n^-(x) \leq g_n(x) \quad\forall \, x \in \partial\Omega_n.
$$
We first observe that $b_n^-(y)=g_n(y)$ and the tangental gradients
$\nabla_{\partial\Omega} b_n^-(y)=\nabla_{\partial\Omega} g_n(y)$
by construction, but $g_n$ grows quadratically away from $y$
on $\partial\Omega_n$ whereas
$p$ is just negative on $\partial\Omega_n$. To quantify the last
statement, we let $y=0$ for simplicity
and resort to the uniform convexity of $\Omega$ (and thus
to that of every $\Omega_n$) to deduce the existence of two balls $B_R$
and $B_r$ tangent to $\Omega_n$ at $0\in\partial\Omega_n$ and so
that
$$
\Omega_n \subset B_r \subset B_R;
$$
hence $r<R$. Note that $0 \in \partial B_r, \partial B_R$ and the centers of these balls are $(0,\ldots,0,r)$ and $(0,\ldots,0,R)$, respectively. We denote $x'=(x_i)_{i=1}^{d-1}$ and note that
$x=(x',x_d)\in\partial B_r$ satisfy $|x'|^2+(x_d-r)^2 = r^2$, whence
$$
x_d \Big( 1 - \frac{x_d}{2r}  \Big) = \frac{1}{2r} |x'|^2
\quad\Rightarrow\quad
\frac{1}{2r} |x'|^2 \le x_d \le \frac{1}{r} |x'|^2,
$$
provided $x_d \le r$. This in turn implies for $1<\xi<\frac{R}{r}$ fixed
$$
p(x) \le p\Big(x',\frac{1}{2r} |x'|^2\Big) = \frac{E^{1/d}}{2}
\Big(1-\frac{R}{r} + \frac{1}{4r^2} |x'|^2 \Big)  |x'|^2 \le
\frac{E^{1/d}}{2} (1- \xi) |x'|^2 < 0,
$$
provided $|x'|^2 \le C_1:=4 r^2 \big(\frac{R}{r} - \xi \big)$ and $R$ is used in the definition of $p$.
Since $|x'|^2\leq r^2$ and $x_d^2 \leq \frac{|x'|^4}{r^2} \leq |x'|^2$, 
	we have that $|x|^2 = |x'|^2 + x_d^2 \le 2 |x'|^2$ and we deduce
$$
|x'|^2 \le C_1
\quad\Rightarrow\quad
p(x) \le - E^{1/d} \frac{(\xi-1)}{4} |x|^2 = -
E^{1/d} C_2 |x|^2 .
$$
On the other hand, for $x\in\partial B_r$ with $|x'| > C_1$
we infer that the distance from $x$ to $\partial B_R$ is strictly
positive whence
$$
p(x) \le -C_3 |x|^2.
$$
Since both constants $C_2,C_3$ depend only on $r,R$, we see that $p$
grows quadratically on $\partial B_r$ with a constant independent of
$n$, and thus on $\Omega_n\subset B_r$. To compare $b_n^-$ with $g_n$,
we recall that $g_n$ is a smooth function for Taylor formula
to give
$$
\Big|g_n(x) - g_n(0) - \nabla g_n(0) x\Big|
\le \frac{1}{2} |g_n|_{W^2_\infty(\Omega_n)}|x|^2
\qquad\forall \, x\in \Omega_n.
$$
We finally choose the factor $E$ in $b_n^-$ proportional to
$|g_n|_{W^2_\infty(\Omega_n)}$ and realize that $b_n^-(x)\le g_n(x)$
for all $x\in\partial\Omega_n$ as asserted.

\smallskip
\textit{Step 4: Uniform Convergence.}
We view both $u$ and $u_n$ as viscosity solutions of \eqref{E:MA}, the former
with data $(f,g,\Omega)$ and the latter with data $(f_n,u_n,\Omega)$.
Applying \Cref{L:ContDepSmooth} (continuous dependence on data),
along with \eqref{error-f} and \eqref{error-g},
we obtain
\begin{align*}
\|u_n-u\|_{L^{\infty}(\Omega)} &\leq C
\|f_n-f\|_{L^\infty(\Omega)}^{1/d} 
+ \|u_n-g\|_{L^\infty(\partial \Omega)} \\
& \le C  \sigma(\rho_n)^{1/d} + \sigma(\rho_n) + \sigma(\delta_n) + C_n \delta_n.
\end{align*}
Given an arbitrary number $\beta$ we first choose $\rho_n$ so that
$C\sigma(\rho_n)^{1/d} + \sigma_n(\rho_n) \le \frac{\beta}{2}$.
This choice determines the regularity of $g_n$, namely its
$W^2_\infty$ and $C^{2,\alpha}$ norms in $\widetilde{\Omega}$. Since
$C_n$ is proportional to $|g_n|_{W^2_\infty(\Omega_n)}$, we finally select
$\delta_n$ so that $\sigma(\delta_n) + C_n \delta_n \le\frac{\beta}{2}$.
This shows the desired uniform convergence of $u_n$ to $u$ in $\Omega$.
\end{proof}

\subsection{Uniform Convergence: Regularization Approach}\label{S:convergence}
  In this section we combine Lemma \ref{L:DCP} (discrete comparison principle),
  \Cref{L:FullConsistency} (consistency of $T_\ve[\interp u]$),
  \Cref{L:Barrier} (discrete boundary barrier),
  \Cref{L:BarrierInterior} (discrete interior barrier), and \Cref{L:Approximation}
  (approximation of viscosity solutions by smooth solutions)
  to prove uniform convergence of $\uve$ to $u$ in $\Omega$.

Since $\uve$ is defined in the computational domain $\Omega_h$,
and $\Omega_h\subset\Omega$, we extend $\uve$ to $\Omega$ as
follows. Given $x\in\Omega\setminus \Omega_h$ let
$z\in\partial\Omega_h$ be the closest point to $x$, which is unique
because $\Omega_h$ is convex, and let
\begin{equation}\label{extension}
  \uve (x) := \uve(z) = \interp g(z)
  \quad\forall \, x \in \Omega\setminus\Omega_h.
\end{equation}

\begin{Theorem}[uniform convergence] \label{T:Convergence}
Let $\Omega$ be uniformly convex, $f, g \in C(\overline{\Omega})$
and $f\ge0$ in $\Omega$.
The discrete solution $\uve$ of (\ref{E:2ScOp})
and (\ref{extension}) converges uniformly to the unique 
viscosity solution $u\in C(\overline{\Omega})$ of \eqref{E:MA}
as $\varepsilon = (h, \delta, \theta) \rightarrow 0$
and $\frac{h}{\delta} \rightarrow 0$.
\end{Theorem}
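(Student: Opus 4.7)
The plan is to combine the regularization Lemma~\ref{L:Approximation} with the discrete comparison principle (Lemma~\ref{L:DCP}), the interior consistency estimate (Lemma~\ref{L:FullConsistency}), and the two discrete barriers (Lemmas~\ref{L:Barrier} and~\ref{L:BarrierInterior}) in a diagonal argument. Given $\eta>0$, Lemma~\ref{L:Approximation} produces a smooth convex $u_n$ on $\Omega_n \supset \Omega$ with data $(f_n,g_n,\Omega_n)$ such that $f_n>0$ is smooth, $f_n \to f$ uniformly, and $\|u_n-u\|_{L^\infty(\Omega)} \leq \eta/2$. Since $f_n$ is strictly positive and the domain $\Omega_n$ is uniformly convex and smooth, $u_n$ has Hessian bounded between positive constants, so Lemma~\ref{L:FullConsistency} applies with consistency error $\sigma_n(\varepsilon) := C_1^n \delta^\alpha + C_2^n (h^2/\delta^2 + \theta^2) \to 0$ as $\varepsilon \to 0$ with $h/\delta \to 0$. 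The theorem then reduces to the claim $\|u_\varepsilon - u_n\|_{L^\infty(\Omega_h)} \leq \eta/2$ for $\varepsilon$ small depending on $n$; combined with the extension \eqref{extension}, this yields uniform convergence in $\Omega$.

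To bound $u_\varepsilon - u_n$ from below pointwise, I fix a target node $z \in \Nhi$ and consider the perturbed interpolant
\[
w_h^{-,z} := \mathcal{I}_h u_n + \alpha\, q_h + \beta\, p_h^z - c_n,
\]
where $q_h$ is the interior barrier (Lemma~\ref{L:BarrierInterior}), $p_h^z$ is the boundary barrier associated to $z$ (Lemma~\ref{L:Barrier}), and $\alpha,\beta,c_n \geq 0$ are tuning parameters. The elementary product inequality $\prod_{j=1}^d (a_j+b_j+c_j) \geq \prod_j a_j + \prod_j b_j + \prod_j c_j$ for nonnegative $a_j,b_j,c_j$, together with the super-additivity of the minimum $\min_\mathbf{v}(A+B+C) \geq \min_\mathbf{v} A + \min_\mathbf{v} B + \min_\mathbf{v} C$, yields
\[
T_\varepsilon[w_h^{-,z}](x_i) \geq T_\varepsilon[\mathcal{I}_h u_n](x_i) + \alpha^d + \beta^d E.
\]
In the interior region $\Omega_{h,\delta}$, Lemma~\ref{L:FullConsistency} gives $T_\varepsilon[\mathcal{I}_h u_n] \geq f_n - \sigma_n \geq f - \sigma_n$, so choosing $\alpha^d \geq \sigma_n$ restores $T_\varepsilon[w_h^{-,z}] \geq f$. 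In the boundary layer $\omega_{h,\delta}$, only discrete convexity $T_\varepsilon[\mathcal{I}_h u_n] \geq 0$ is available, forcing $\beta^d E \geq \|f\|_{L^\infty(\Omega)}$. The constant $c_n$ absorbs the boundary mismatch $\|u_n-g\|_{L^\infty(\partial\Omega_h)}$ quantified in Step~3 of Lemma~\ref{L:Approximation}, ensuring $w_h^{-,z} \leq \mathcal{I}_h g = u_\varepsilon$ on $\Nhb$. Lemma~\ref{L:DCP} then delivers $w_h^{-,z}(z) \leq u_\varepsilon(z)$, and invoking the crucial pointwise bound $|p_h^z(z)| \leq C\,E^{1/d}\delta$ (for $z$ near $\partial\Omega_h$) or absence of $p_h^z$ (for $z$ in the interior, where $q_h$ already suffices), along with $\alpha^d \to 0$ and $c_n \to 0$, yields $u_\varepsilon(z) \geq u_n(z) - o(1)$ uniformly in $z$. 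The reverse inequality $u_\varepsilon(z) \leq u_n(z) + o(1)$ follows by a symmetric construction: we subtract a discretely convex perturbation from $\mathcal{I}_h u_n$ that is dominated by the uniform lower bound on $\sdd \mathcal{I}_h u_n \geq \lambda_n/2 > 0$ (so discrete convexity is preserved and Lemma~\ref{L:DCP} applies), or we invoke a companion regularization from above, again combining Lemmas~\ref{L:Barrier},~\ref{L:BarrierInterior}, and~\ref{L:FullConsistency}.

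The main obstacle is the absence of interior consistency in the boundary layer $\omega_{h,\delta}$, which rules out a single-barrier construction: the interior barrier $q_h$ alone would require $\alpha = O(\|f\|_\infty^{1/d})$ to dominate $T_\varepsilon$ near $\partial\Omega_h$, destroying the smallness of $\alpha\,q_h(z)$ at a target point far from the boundary. The boundary barrier $p_h^z$ is precisely engineered to close this gap: it enforces a large $T_\varepsilon$ value globally while remaining $O(\delta)$ small at the distinguished point $z$, at the cost of being constructed node-by-node. A secondary subtlety is that $u_n$ lives on the larger domain $\Omega_n$ and does not match $g$ on $\partial\Omega_h \subset \partial\Omega$, which is handled by the boundary estimate $|u_n - g| \leq \sigma(\delta_n) + \sigma(\rho_n) + C_n \delta_n$ derived in Step~3 of Lemma~\ref{L:Approximation}. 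A diagonal selection of $\varepsilon = \varepsilon(n) \to 0$ slowly enough compared to the $n$-dependent constants $C_1^n, C_2^n, C_n$ completes the argument and, via Lemma~\ref{L:Approximation}, shows $\|u_\varepsilon - u\|_{L^\infty(\Omega)} \to 0$.
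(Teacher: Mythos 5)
Your overall strategy is the one the paper uses (regularization via Lemma~\ref{L:Approximation}, then discrete comparison principle with the two barriers and interior consistency), and your boundary-layer treatment of a target node $z$ close to $\partial\Omega_h$ is essentially Step~1 of the paper's proof. However, there is a genuine gap in the passage from boundary-layer nodes to interior nodes. Your construction applies Lemma~\ref{L:DCP} once on all of $\Omega_h$ to $w_h^{-,z}=\mathcal{I}_h u_n+\alpha q_h+\beta p_h^z-c_n$, and the useful conclusion comes from $|p_h^z(z)|\le C E^{1/d}\delta$; but Lemma~\ref{L:Barrier} only constructs $p_h^z$ and only gives the $O(\delta)$ bound at $z$ when $\mathrm{dist}(z,\partial\Omega_h)\le\delta$. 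For a node $z$ deep in $\Omega_{h,\delta}$ your proposed alternative, ``absence of $p_h^z$, where $q_h$ already suffices,'' cannot work: without the boundary barrier, the best you have in the boundary layer $\omega_{h,\delta}$ is $T_\varepsilon[w_h^{-,z}]\ge\alpha^d\to 0$, which fails to dominate $f$ there, so Lemma~\ref{L:DCP} cannot be invoked on $\Omega_h$. (You in fact articulate exactly this obstruction yourself in the last paragraph, but your stated construction does not resolve it for interior $z$.) What is needed is the paper's two-step structure: first use the boundary barrier to control $\uve-\mathcal{I}_h u_n$ at the nodes of the $\delta$-layer near $\partial\Omega_h$, then apply Lemma~\ref{L:DCP} \emph{on the subdomain $\Omega_{h,\delta}$} with the interior barrier $q_h$ alone, feeding the Step-1 estimate in as the boundary datum on $\partial\Omega_{h,\delta}$. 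With that split the coefficient of $q_h$ can be taken of order $\sigma_n(\varepsilon)^{1/d}$ (the consistency error), which is the reason $\alpha q_h(z)$ goes to zero even though $q_h(z)$ itself is $O(1)$ at interior $z$.

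A second, smaller issue is the reverse inequality. Subtracting a barrier from $\mathcal{I}_h u_n$ and invoking $\sdd\mathcal{I}_h u_n\ge\lambda_n/2$ does not preserve discrete convexity in the layer $\omega_{h,\delta}$ (there the lower bound on $\sdd\mathcal{I}_h u_n$ comes from consistency, which you do not have) nor even in $\Omega_{h,\delta}$, because the boundary barrier must carry a factor $\beta E^{1/d}\ge\|f\|_{L^\infty(\Omega)}^{1/d}$ that is $O(1)$, not small relative to $\lambda_n$. The correct ``symmetric construction'' is not a subtraction: one compares $\uve+p_h$ (resp.\ $\uve+E_{n,2}^{1/d} q_h$) with $\mathcal{I}_h u_n$, i.e.\ one always \emph{adds} the discretely convex barrier to whichever of the two competing functions plays the role of $u_h$ in Lemma~\ref{L:DCP}, and the two inequalities come from swapping that role rather than from changing a sign. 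This is how Steps~1 and~2 of the paper's proof obtain both $\uve\le\mathcal{I}_h u_n+o(1)$ and $\uve\ge\mathcal{I}_h u_n-o(1)$ without ever differentiating a non-convex difference.
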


\begin{proof}
We first split
$$
		\|u-\uve\|_{L^\infty(\Omega)} \leq \|u-\uve\|_{L^\infty(\Omega_h)} + \|u-\uve\|_{L^\infty(\Omega \setminus \Omega_h)}
$$	
and then employ the triangle inequality to write
$$
\|u-\uve\|_{L^\infty(\Omega_h)} \le
\|u-u_n\|_{L^\infty(\Omega_h)} +
\|u_n- \interp u_n\|_{L^\infty(\Omega_h)} +
\|\interp u_n-\uve\|_{L^\infty(\Omega_h)}.
$$
Next, we recall that \Cref{L:Approximation} yields
$\|u-u_n\|_{L^\infty(\Omega_h)} \leq \|u-u_n\|_{L^\infty(\Omega)}\to0$ as $n\to\infty$. 
In addition, polynomial interpolation theory gives
$$
\|u_n- \interp u_n\|_{L^\infty(\Omega_h)} \le C
|u_n|_{W^2_\infty(\Omega)} h^2 \to 0,
$$
as $h\to0$ for $n$ fixed. On the other hand, \eqref{extension} yields
\[
|u(x)-\uve(x)| = |u(x)-\uve(z)| \leq |u(x)-u(z)| + |u(z) - \uve(z)|
\quad\forall \, x\in\Omega\setminus\Omega_h
\]
where $z\in\partial\Omega_h$.
If $\tau$ is the modulus of continuity of $u \in C(\overline{\Omega})$,
we have
$$
\|u-\uve\|_{L^\infty(\Omega\setminus\Omega_h)} \le 
\tau\big(\textrm{dist}_H(\Omega,\Omega_h)\big) + \|u-\uve\|_{L^\infty(\Omega_h)}
$$
Since $\textrm{dist}_H(\Omega,\Omega_h) \to 0$, as $h \to 0$,
the proof reduces to showing that $\|\interp u_n-\uve\|_{L^\infty(\Omega_h)}$
can be made arbitrarily small. We do this in three steps.

\smallskip
\textit{Step 1: Boundary Estimate.}
Let $p_h$ be the function of \Cref{L:Barrier}
(discrete boundary barrier) with constant $E_{n,1}:=C|u_n|_{W^2_\infty(\Omega)}^d+3\sigma_n$,
where $C|u_n|_{W^2_\infty(\Omega)}^d$ is the consistency error \eqref{E:FullConsistency}
from \Cref{L:FullConsistency} (consistency of $T_\ve[\interp u]$)
with $u_n$ in place of $u$ and
$3\sigma_n$ is a bound \eqref{error-f} for $\|f-f_n\|_{L^\infty(\Omega)}$. 
Since both $\uve$ and $p_h$ are discretely convex, we have
$$
 T_{\varepsilon}[\uve + p_h](x_i)  \geq T_{\varepsilon}[\uve](x_i)
    + T_{\varepsilon}[p_h](x_i) \geq f(x_i) + E_{n,1} \geq T_{\varepsilon}[\interp u_n](x_i)
$$
for all $x_i \in \Nhi$. Moreover, since \eqref{error-g} holds for
all $z \in \partial \Omega$ and $\Nhb\subset\partial\Omega$,
linear interpolation implies that $\interp u_n \ge \interp g - \xi_n = \uve - \xi_n$
on $\partial\Omega_h$ for all $h$,
where $\xi_n := \sigma(\rho_n) + \sigma(\delta_n) + C_n \delta_n$ and
$\delta_n \ge \textrm{dist}_H(\Omega_n,\Omega)$, whence
$\uve+p_h - \xi_n \le \interp u_n$ on $\partial\Omega_h$.
Consequently, for all
$z \in \Nhi$ such that ${\rm dist}(z,\partial \Omega) \leq 2\delta$,
\Cref{L:DCP} (discrete comparison principle) yields
\begin{equation*}\label{E:boundarybound}
  \uve(z) - C E_{n,1}^{1/d} \delta  - \xi_n \leq \interp u_n(z).
\end{equation*}
A similar argument with $\uve-p_h+ \xi_n$ gives rise to the
reverse estimate.

\smallskip
\textit{Step 2: Interior Estimate.}
We resort to the function $q_h$ of
\Cref{L:BarrierInterior} (discrete interior barrier) to construct a
discrete lower barrier $b_\ve^-$ as follows: let
$$
E_{n,2} := C|u_n|_{C^{2+\alpha}(\overline{\Omega})}
  |u_n|_{\Wtio}^{d-1} \delta^{\alpha} +
  C |u_n|_{\Wtio}^d \Big(\frac{h^2}{\delta^2}+\theta^2\Big)
  + 3\sigma_n
$$
and
$$
  b_{\varepsilon}^- := \uve 
  + E_{n,2}^{1/d} q_h
  	- C E_{n,1}^{1/d} \delta  - \xi_n .
$$
Since $q_h\le0$, Step 1 guarantees that
$b_{\varepsilon}^- \leq \interp u_n$ on $\partial \Omega_{h,\delta}$,
where $\Omega_{h,\delta}$ is defined in \eqref{Omega-delta}.
Applying \Cref{L:FullConsistency} (consistency of $T_{\varepsilon}[\interp u])$
for $u_n$ instead of $u$ implies
\begin{align*}
   T_{\varepsilon}[b_{\varepsilon}^-](x_i)   &\geq
   T_{\varepsilon}[\uve](x_i) + E_{n,2} = f(x_i) + E_{n,2} 
   \geq T_{\varepsilon}[\interp u_n] (x_i)
   \quad\forall \, x_i \in \Nhi\cap\Omega_{h,\delta}
\end{align*}
where we have used that both $\uve$ and $q_h$ are discretely convex
as well as \eqref{error-f}.
\Cref{L:DCP} (discrete comparison principle) yields
  $$
  b_\ve^- = \uve + E_{n,2}^{1/d} q_h
  - C E_{n,1}^{1/d} \delta  - \xi_n \leq \interp u_n
  \quad\textrm{in} \quad \Omega_{h,\delta}.
  $$
A similar argument with
$b_\ve^+ := \uve - E_{n,2}^{1/d} q_h + C E_{n,1}^{1/d} \delta + \xi_n$
results in $b_\ve^+ \ge \interp u_n$.

Combining these estimates with those of Step 1, we end up with
	\begin{equation} \label{E:Omegahbound}
	\|\uve-\interp u_n\|_{L^\infty(\Omega_h)} \le
	C E_{n,1}^{1/d} \delta + C E_{n,2}^{1/d} + \xi_n.
	\end{equation}
\smallskip
\textit{Step 3: Uniform convergence in $\Omega$.}
We finally proceed as in step 4 of the proof of Lemma \ref{L:Approximation}
(approximation of viscosity solutions by smooth solutions). Given
an arbitrary number $\beta>0$, we choose $\rho_n$ so that
$\sigma(\rho_n)\le\frac{\beta}{3}$. This dictates the regularity of
$g_n$ hidden in the constant $C_n$ of $\xi_n$, as well as that of $u_n$,
and allows us to select $\delta_n$
so that $\sigma(\delta_n)+C_n\delta_n\le\frac{\beta}{3}$; hence
$\xi_n\le\frac{2\beta}{3}$. We next take
$\delta, \frac{h}{\delta}$ and $\theta$ small enough, depending on
$u_n$, so that the first two terms of \eqref{E:Omegahbound}
are $\le\frac{\beta}{3}$ and thus $\|\uve - \interp u_n\|_{L^{\infty}(\Omega_h)}\leq \beta$.
This completes the proof.
\end{proof}

\subsection{Uniform Convergence: Barles-Souganidis Approach}\label{S:BarlesSouganidis}
In this section we adapt the approach of \cite{BaSoug} to our setting. 
Since \eqref{extension} extends the definition of discrete
solution $\uve$ to $\Omega$,
we let the limit supremum and limit infimum of $\uve$ be
$$
	u^*(x) = \limsup_{\varepsilon, \frac{h}{\delta} \to 0, z \to x} \uve(z), 
	\qquad
	u_*(x) = \liminf_{\varepsilon, \frac{h}{\delta} \to 0, z \to x} \uve(z),
$$
and observe that $u^*$ is upper semi-continuous and $u_*$ is lower
semi-continuous. We show that they attain the Dirichlet boundary
condition pointwise. Moreover, they are viscosity subsolution and
supersolution of \eqref{E:MA}, respectively.
An essential difficulty associated with
\eqref{E:MA}, already mentioned in \cite{FeJe:16}, is that viscosity
sub and supersolutions of \eqref{E:MA} must be convex for the
comparison principle to be applicable. Since $\uve$ is only
discretely convex, it is not obvious that $u^*$ and $u_*$ are convex.

To circumvent this issue we proceed as in \cite{FeJe:16}: we let
$\partial^{2,+}_{v_jv_j} u := \max\big( \partial^2_{v_jv_j} u,0\big)$,
$\partial^{2,-}_{v_jv_j} u := -\min\big( \partial^2_{v_jv_j} u,0\big)$,
introduce the continuous version of our ideal discrete operator in
\eqref{E:2ScOp}
\begin{equation*}
T[u] := \min_{\bv=(v_j)_{j=1}^d\in\Vperp} \left( \prod_{j=1}^d \partial^{2,+}_{v_jv_j} u
- \sum_{j=1}^d \partial^{2,-}_{v_jv_j}u \right),
\end{equation*}  
and show that $u$ is a convex viscosity solution of
\eqref{E:MA} if and only if $u$ is a viscosity solution of
the Dirichlet problem
\begin{equation} \label{E:MAFull}
  T[u] = f \quad\textrm{in }\Omega,
  \qquad
  u = g \quad\textrm{on }\partial\Omega,
\end{equation}
for which we do not require convexity because it is built-in the operator.
\begin{Lemma}[equivalence of viscosity solutions] \label{L:EquivalenceVisc}
If $f\in C(\Omega)$ satisfies $f \geq 0$, and $u\in C(\overline{\Omega})$,
then $u$ is a
viscosity solution of \eqref{E:MAFull} if and only if $u$ is a convex
viscosity solution of \eqref{E:MA}.
\end{Lemma}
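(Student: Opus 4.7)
The plan is to exploit a pointwise algebraic dichotomy satisfied by the continuous operator $T$ at any $C^2$ test function $\phi$. Choosing an orthonormal basis $\bv\in\Vperp$ that diagonalizes $D^2\phi(x_0)$, we see that if $D^2\phi(x_0)\ge 0$ then $\partial^{2,-}_{v_jv_j}\phi(x_0)=0$ for all $j$ and formula \eqref{E:Det} gives $T[\phi](x_0)=\det D^2\phi(x_0)\ge 0$; whereas if $D^2\phi(x_0)$ has a negative eigenvalue $\lambda<0$ with eigenvector $w$, picking an orthonormal basis containing $w$ forces $\prod_j\partial^{2,+}_{v_jv_j}\phi(x_0)=0$ while $\sum_j\partial^{2,-}_{v_jv_j}\phi(x_0)\ge -\lambda>0$, and hence $T[\phi](x_0)\le \lambda<0$. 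In summary, the sign condition $T[\phi](x_0)\ge 0$ is equivalent to $D^2\phi(x_0)\ge 0$, in which case $T[\phi](x_0)=\det D^2\phi(x_0)$.

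With this dichotomy in hand, the ``if'' direction is short. Suppose $u$ is a convex viscosity solution of \eqref{E:MA}. If $\phi\in C^2$ touches $u$ from above at $x_0$, a standard Taylor argument combined with the convexity inequality $u(x_0+tv)+u(x_0-tv)\ge 2u(x_0)$ for every unit $v$ forces $D^2\phi(x_0)\ge 0$, so $T[\phi](x_0)=\det D^2\phi(x_0)\ge f(x_0)$ by the convex subsolution property of $u$. For the supersolution condition of $T[u]=f$ it suffices (in the standard Gutierrez convention) to test with convex $\phi$ touching $u$ from below; then $D^2\phi(x_0)\ge 0$, the dichotomy gives $T[\phi](x_0)=\det D^2\phi(x_0)$, and the convex supersolution property of $u$ yields $T[\phi](x_0)\le f(x_0)$.

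For the ``only if'' direction, start from a viscosity solution $u$ of \eqref{E:MAFull}. For every $\phi\in C^2$ touching $u$ from above at $x_0$, the subsolution inequality $T[\phi](x_0)\ge f(x_0)\ge 0$ combined with the dichotomy simultaneously yields $D^2\phi(x_0)\ge 0$ and $\det D^2\phi(x_0)\ge f(x_0)$: the second is precisely the convex subsolution property for \eqref{E:MA}, while the first, valid at every upper contact point, is the viscosity characterization of convexity for upper semi-continuous functions. The supersolution side of \eqref{E:MA} follows by testing with convex $\phi$ touching $u$ from below at $x_0$, for which the dichotomy gives $\det D^2\phi(x_0)=T[\phi](x_0)\le f(x_0)$. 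The main (and essentially only) technical point is the step ``$D^2\phi(x_0)\ge 0$ at every upper contact point implies $u$ is convex,'' which I would handle through the sup-convolution $u^\eta(x):=\sup_y\{u(y)-\tfrac{1}{2\eta}|x-y|^2\}$: it is semiconvex and inherits the viscosity subsolution property, so Alexandrov a.e.\ twice differentiability yields $D^2 u^\eta\ge 0$ a.e.; this combined with semiconvexity forces $u^\eta$ to be convex, and convexity of $u$ then follows from the monotone pointwise limit $u^\eta\downarrow u$ as $\eta\to 0$.
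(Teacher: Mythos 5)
Your proof is correct and follows the same overall strategy as the paper: both arguments pivot on the pointwise sign dichotomy for $T[\phi]$ (namely $T[\phi](x_0)\ge 0$ iff $D^2\phi(x_0)\ge 0$, in which case $T[\phi](x_0)=\det D^2\phi(x_0)$), use it to extract simultaneously the subsolution inequality and viscosity convexity of $u$, and reduce the supersolution side to convex test functions. You extract this dichotomy as a clean standalone observation, which the paper instead handles inline via a case split in each direction; this is a nice organizational improvement. The genuine difference is how the implication ``$\lambda_1[D^2\phi](x_0)\ge0$ at every upper contact point $\Rightarrow u$ convex'' is settled: the paper simply cites Oberman's result \cite{Ob2} on the equivalence between viscosity convexity and convexity for continuous functions, whereas you give a self-contained sup-convolution argument (semiconvexity of $u^\eta$, transfer of the degenerate-elliptic viscosity inequality $-\lambda_1[D^2u]\le 0$ to $u^\eta$, Alexandrov a.e. twice differentiability to get $D^2u^\eta\ge 0$ a.e., positivity of the singular part of the Hessian of a semiconvex function to conclude $u^\eta$ convex, and the monotone limit $u^\eta\downarrow u$). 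Your route is longer but avoids the external reference; both are legitimate. One minor point of rigor: in the sufficiency direction you invoke the Gutierrez convex sub/supersolution properties of $u$ directly at $\phi$, but $\phi$ is only known to be pointwise convex at $x_0$. Strictly speaking one needs the paper's perturbation to the quadratic Taylor polynomial $q^\pm(x)=q(x)\pm\sigma|x-x_0|^2$ (and, for the supersolution side, the case split on whether $D^2\phi(x_0)$ is strictly positive or has a zero eigenvalue) before letting $\sigma\downarrow 0$; your dichotomy already disposes of the degenerate eigenvalue cases, so the gap is small and routine, but it is worth spelling out.
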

\begin{proof}
Since $u$ is uniformly continuous in $\Omega$ the notion of
Dirichlet condition is classical in both cases. We thus verify the
equation in the viscosity sense.

\smallskip
\textit{Step 1: Necessity.} We rely on the notion of convexity
of a function $v\in C(\Omega)$ in the
viscosity sense: for test function $\phi\in C^2(\Omega)$ that touches
$v$ from above at a point $x\in\Omega$ the
smallest eigenvalue $\lambda_1[D^2\phi](x)$  of $D^2\phi$ at $x$ satisfies
$$
	\lambda_1[D^2\phi](x) \geq 0. 
$$
It is proven in \cite{Ob2} that a continuous function $v$ is convex if
and only if it is convex in the viscosity sense. We show that a
viscosity solution $u$ of \eqref{E:MAFull} is convex in the viscosity
sense and use this equivalence to deduce convexity of $u$.

We observe that $u$ being a viscosity solution of
\eqref{E:MAFull} implies that for $\phi\in C^2(\Omega)$ touching $u$ from above at
$x\in\Omega$, we have
$$
T[\phi](x) \geq f(x) \geq 0
$$
We argue as in \Cref{L:DisConv}: if there is a direction
$v_j\in\mathbb{S}$ for which $\frac{\partial^2\phi}{\partial v_j^2}(x)<0$,
then $T[\phi](x) < 0$ which contradicts the preceding statement. Therefore
$$
\frac{\partial^2\phi}{\partial v_j^2}(x) \geq 0
\quad\forall \, v_j \in \mathbb{S}
\qquad\Rightarrow\qquad
\lambda_1[D^2\phi](x)\ge0.
$$
This proves that $u$ is convex as well as
$$
\det D^2\phi(x) = T[\phi](x) = \min_{\bv=(v_j)_{j=1}^d\in\Vperp} \prod_{j=1}^d
\partial^2_{v_jv_j} \phi(x) \ge f(x)
$$
according to \eqref{E:Det}. This implies that $u$ is a convex
subsolution of \eqref{E:MA}.

To prove that $u$ is also a supersolution of \eqref{E:MA}, we recall
that the definition of viscosity solutions for \eqref{E:MA} uses
convex test functions $\phi\in C^2(\Omega)$ \cite{Gut}; hence
$\det D^2\phi=T[\phi]$. Consequently,
if $u-\phi$ attains a minimum at $x\in\Omega$, then
$$
\det D^2\phi(x) = T[\phi](x) \le f(x)
$$
whence $u$ is a supersolution of \eqref{E:MA}.

\smallskip
\textit{Step 2: Sufficiency.}
Let's assume now that $u$ is a convex viscosity solution of
\eqref{E:MA}, and $\phi\in C^2(\Omega)$ is a test function that touches $u$
at $x_0\in\Omega$. Inspired by \cite[Remark 1.3.3]{Gut}, we
decompose $\phi=q + r$ into a quadratic $q$ and a remainder $r$
$$ 
q(x) = \phi(x_0) + D\phi(x_0) (x-x_0) + \frac{1}{2} (x-x_0)^T
D^2\phi(x_0) (x-x_0),
\quad
r(x) = o(|x-x_0|^2);
$$
hence $D^2\phi(x_0)=D^2q(x_0)$.
If $q^\pm(x):=q(x) \pm \sigma |x-x_0|^2$, we then observe that
$q^+\ge \phi$ and $q^-\le\phi$ in a suitable neighborhood of $x_0$
provided $\sigma>0$. We take advantage of $q^\pm$ being quadratic
to realize that $q^\pm$ is convex if and only if $D^2q^\pm(x_0)\ge0$.

If $u-\phi$ attains a local max at $x_0$, so does
$u-q^+$ and $D^2\phi(x_0)\ge0$ because $u$ is
convex. Therefore, the quadratic $q^+$ is convex and must satisfy
\[
\det D^2 q^+(x_0) = \det (D^2 q(x_0) + 2\sigma I) \ge f(x_0),
\]
because $u$ is a viscosity solution of \eqref{E:MA}. Take the limit
$\sigma\downarrow 0$ to find out that $T[\phi](x_0)=\det D^2\phi(x_0)\ge f(x_0)$
whence $u$ is a viscosity subsolution of \eqref{E:MAFull}.

On the other hand, if $u-\phi$ attains a local min at $x_0$, so does
$u-q^-$. We have now two possible cases. If all the eigenvalues of
$D^2\phi(x_0)$ are strictly positive, then $q^-$ is a convex
quadratic for $\sigma$ sufficiently small. This in turn implies
\[
\det D^2 q^-(x_0) = \det (D^2 q(x_0) - 2\sigma I) \le f(x_0),
\]
as $u$ is a viscosity solution of \eqref{E:MA};
hence $T[\phi](x_0)=\det D^2\phi(x_0) \le f(x_0)$ upon letting $\sigma\downarrow 0$.
If any eigenvalue of $D^2\phi(x_0)$ is non-positive, then
$T[\phi](x_0)\le0$ by definition and $T[\phi](x_0) \le f(x_0)$ because
$f\ge0$. We thus deduce that $u$ is a viscosity supersolution of \eqref{E:MAFull},
whence a viscosity solution of \eqref{E:MAFull}, as asserted.
\end{proof}

	We are now ready to prove the convergence of our discrete
        solution $\uve$ to the viscosity solution $u$ of
        \eqref{E:MA}. 
        
\begin{Theorem}[uniform convergence] \label{T:ConvergenceBarSoug}
Let $\Omega$ be uniformly convex, $f\in C(\Omega) \cap L^\infty(\Omega)$
satisfy $f\ge 0$, and $g\in C(\partial\Omega)$.
The discrete solution $\uve$ of (\ref{E:2ScOp}) converges uniformly to the unique 
viscosity solution $u\in C(\overline{\Omega})$ of \eqref{E:MA}
as $\varepsilon = (h, \delta, \theta) \rightarrow 0$
and $\frac{h}{\delta} \rightarrow 0$.
\end{Theorem}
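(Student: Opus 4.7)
The plan is to follow the Barles-Souganidis program applied to the reformulated equation \eqref{E:MAFull} rather than to \eqref{E:MA}: by \Cref{L:EquivalenceVisc} the two problems have the same viscosity solutions, but the operator $T$ built from $\partial^{2,\pm}_{v_jv_j}$ encodes convexity into the PDE itself and therefore fits the standard half-relaxed limits machinery, bypassing the two obstructions flagged in the introduction. The uniform stability of \Cref{L:Existence} ensures that the half-relaxed limits $u^*$ and $u_*$ are finite with $u_*\le u^*$ pointwise. The strategy is then to verify (a) $u^*=u_*=g$ on $\partial\Omega$; (b) $u^*$ is a viscosity subsolution of \eqref{E:MAFull}; (c) $u_*$ is a viscosity supersolution of \eqref{E:MAFull}; (d) $u^*\le u_*$ in $\overline\Omega$. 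Combining (a)--(d) with $u_*\le u^*$ forces $u^*=u_*=u$, where $u\in C(\overline\Omega)$ is the unique viscosity solution of \eqref{E:MA}, and this coincidence is equivalent to uniform convergence $\uve\to u$ on $\overline\Omega$ (with the extension \eqref{extension} absorbed as in \Cref{T:Convergence}).

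For (a), given $z_0\in\partial\Omega$ I would combine the discrete boundary barrier $p_h$ of \Cref{L:Barrier} (with $E=\|f\|_{L^\infty(\Omega)}$) with the modulus of continuity $\tau_g$ of $g$ to construct discrete sub- and supersolutions that agree pointwise with $g(z_0)$ at $z_0$ up to a $CE^{1/d}\delta$ error; then \Cref{L:DCP} sandwiches $\uve$ and yields $|\uve(z)-g(z_0)|\le \tau_g(|z-z_0|+h)+CE^{1/d}\delta$ for $z\in\overline\Omega_h$ near $z_0$. Taking $\limsup$ and $\liminf$ as $\ve\to 0$ and $z\to z_0$ yields $u^*(z_0)=u_*(z_0)=g(z_0)$.

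For (b) I would run the standard Barles-Souganidis test-function argument. Let $\phi\in C^2(\Omega)$ and $x_0\in\Omega$ be a strict local maximum of $u^*-\phi$. By definition of $u^*$ and the piecewise linearity of $u_{\ve_n}-\interp\phi$, after passing to a subsequence, $u_{\ve_n}-\interp\phi$ attains a local maximum at some node $y_n\to x_0$, and eventually $y_n\in\Nhi\cap\Omega_{h_n,\delta_n}$ so that interior consistency applies. At the nodal maximum, $\sdd u_{\ve_n}(y_n;v)\le\sdd\interp\phi(y_n;v)$ for every $v\in\St$; since $t\mapsto\max(t,0)$ is nondecreasing and $t\mapsto -\min(t,0)$ is nonincreasing, a termwise inspection of \eqref{E:PracticalOp} extends \Cref{L:Monotonicity} past the discretely convex case and gives $T_{\ve_n}[\interp\phi](y_n)\ge T_{\ve_n}[u_{\ve_n}](y_n)=f(y_n)$. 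The termwise consistency of \Cref{L:Consistency}(i)--(ii) applied to $\sddp\interp\phi$ and $\sddm\interp\phi$, together with the smoothness of $\phi$, then yields $T_{\ve_n}[\interp\phi](y_n)\to T[\phi](x_0)$, and letting $n\to\infty$ produces $T[\phi](x_0)\ge f(x_0)$. Property (c) follows symmetrically from a strict local minimum of $u_*-\phi$.

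Property (d) is the delicate part and the main obstacle. Inspection of \Cref{L:EquivalenceVisc} shows that its proof uses only viscosity tests by $C^2$ functions and therefore extends to USC subsolutions and LSC supersolutions: $u^*$ is a convex USC subsolution of \eqref{E:MA} while $u_*$ is an LSC supersolution of \eqref{E:MA}, both classically equal to $g$ on $\partial\Omega$ by (a). Since the unique viscosity solution $u$ of \eqref{E:MA} is a continuous convex function, the classical \MA comparison principle for the Dirichlet problem (see e.g. \cite[Theorem 1.4.6]{Gut}) gives $u^*\le u$ (convex USC subsolution against a convex solution) and $u\le u_*$ (convex solution against an LSC supersolution with equal boundary values); equivalently, both bounds follow from Perron's characterization of $u$ as the supremum of subsolutions, respectively infimum of supersolutions, attaining the boundary data. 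Combined with $u_*\le u^*$, this gives $u^*=u_*=u$ and hence the asserted uniform convergence. The principal technical risk lies in (d): verifying that \Cref{L:EquivalenceVisc} extends cleanly to semi-continuous envelopes, and that the \MA comparison principle applies in the form needed, requires a careful reading of the viscosity theory for \eqref{E:MA}.
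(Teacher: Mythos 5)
Your proposal follows the same Barles--Souganidis skeleton as the paper: half-relaxed limits $u^*,u_*$, boundary identification via the barrier of \Cref{L:Barrier}, sub/supersolution property via monotonicity and interior consistency, comparison, and then compactness to upgrade pointwise to uniform convergence. Steps (a)--(c) essentially coincide with the paper's Steps 1--3, and you correctly flag (and resolve by a termwise argument) a point the paper glosses over: \Cref{L:Monotonicity} as stated requires both functions to be discretely convex, whereas $\phi_h=\interp\phi$ need not be; your observation that $\sdd\uve(x_h;v)\le\sdd\phi_h(x_h;v)$ together with discrete convexity of $\uve$ forces $\sdd\phi_h(x_h;v)\ge0$, so that $\phi_h$ is ``discretely convex at $x_h$'' and the minimum form of $T_\ve$ applies there, is exactly the needed justification.

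Where you genuinely diverge is Step (d). The paper applies a comparison principle directly to the reformulated equation \eqref{E:MAFull}, treating $T$ as a degenerate elliptic operator for which an USC subsolution is dominated by an LSC supersolution with ordered boundary data; it cites \cite[Theorem~3.3, Section~5.C]{CraIshLi} and \cite[V.3]{IL} and notes that $T$ satisfies the required structure conditions. The point of building $T$ was precisely to avoid ever having to decide whether $u^*$ or $u_*$ is convex. Your route instead transports the semi-continuous envelopes back to \eqref{E:MA} via \Cref{L:EquivalenceVisc} and compares against the continuous solution $u$ using the classical Monge--Amp\`{e}re comparison principle. This is plausible but opens two concrete gaps that you would have to close. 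First, the convexity of $u^*$: Step~1 of \Cref{L:EquivalenceVisc} uses \cite{Ob2}, whose result (``convex in the viscosity sense $\Rightarrow$ convex'') is stated for continuous functions; extending it to an USC $u^*$ is believable but not automatic and needs either an argument or a different reference. Second, the comparison $u\le u_*$: $u_*$ is an LSC supersolution of \eqref{E:MA} but has no reason to be convex, so \cite[Theorem~1.4.6]{Gut} (which compares Aleksandrov solutions, hence convex functions) does not apply as quoted; you need a viscosity comparison principle for \eqref{E:MA} that pits a convex subsolution against a possibly non-convex LSC supersolution. Such a statement exists but is not the one you cite, and ``Perron's characterization of $u$ as the infimum of supersolutions'' is not the standard characterization for Monge--Amp\`{e}re (the standard one is the supremum of convex subsolutions). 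You honestly flag this risk, which is appropriate: the paper's choice to state and use a comparison principle for $T$ itself is precisely what sidesteps these two issues, and is why \eqref{E:MAFull} was introduced.
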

\begin{proof}
In view of \Cref{L:EquivalenceVisc} (equivalence of viscosity solutions),
we prove that $\uve$ converges to the viscosity solution of \eqref{E:MAFull}.
To this end, we have to deal with a test function $\phi\in C^2(\Omega)$
and its Lagrange interpolant $\phi_h = \interp \phi$. Without loss of
generality we may assume $\phi \in C^{2,\alpha}(\Omega)$.
We split the proof into five steps. 

\smallskip  
\textit{Step 1: Consistency.}
We have the following alternative to \eqref{E:FullConsistency}
$$
\big| T[\phi](x_0) - T_\ve[\phi_h](x_i)  \big|
\le C_1(\phi) \Big(\delta^\alpha + |x_0-x_i|^\alpha \Big) +
C_2(\phi) \Big( \frac{h^2}{\delta^2} + \theta^2 \Big),
$$
where the constants $C_1, C_2$ are defined in \Cref{L:FullConsistency}
(consistency of $T_\ve[\interp u](x_i)$) and depend on
$|\phi|_{C^{2,\alpha}(B_i)}$ and $|\phi|_{W^2_\infty(B_i)}$ with $B_i$
defined in \eqref{E:Bi}, and $x_0\in\Omega, x_i\in\Nhi\cap \Omega_{h,\delta}$.
The proof of this inequality
proceeds along the lines of those of Lemmas \ref{L:Consistency} and
\ref{L:FullConsistency}, except
that now we need to deal with the functions $s\mapsto \max(s,0)$ and
$s\mapsto\min(s,0)$ in the definitions of both $T$ and $T_\ve$
because $\phi$ may not be convex.
We exploit that these functions are Lipschitz with constant
$1$ to write
$$
\big| \nabla^{2,+}_\delta \phi_h(x_i;v_j) - \partial^{2,+}_{v_j v_j} \phi(x_0)  \big|
\lesssim |\phi|_{C^{2,\alpha}(B_i)} \Big(\delta^\alpha + |x_0-x_i|^\alpha \Big)
+ |u|_{W^2_\infty(B_i)} \frac{h^2}{\delta^2},
$$
together with a similar bound for the operators
$\nabla^{2,-}_\delta$ and $\partial^{2,-}_{v_j v_j}$.

\smallskip  
\textit{Step 2: Subsolutions.} We show that $u^*$ is a
viscosity subsolution of \eqref{E:MAFull};
likewise $u_*$ is a viscosity supersolution.
This hinges on monotonicity and consistency \cite{BaSoug}.
We must show that if $u^* - \phi$ attains a local maximum at $x_0\in\Omega$, we have
$$
T[\phi](x_0) \geq f(x_0);
$$
note that $u^*-\phi$ is upper semi-continuous and the local maximum
is well defined. 
Without loss of generality, we may assume that $u^* - \phi$ attains a
strict global maximum at $x_0\in\Omega$ \cite[Remark in p.31]{IL},
and $x_0\in \Omega_h$ for $h$ sufficiently small.
Let $\uve$ and $z_h$ be a sequence of functions and nodes such that
$$
		\lim_{\varepsilon, \frac{h}{\delta} \to 0, z_h \to x_0} \uve(z_h) = u^*(x_0).
$$
Let $x_h\in\mathcal{N}_h$ be a sequence of nodes so that $\uve - \phi_h$
attains a maximum at $x_h$.
We claim that $x_h \to x_0$ as $h \to 0$.
If not, then there exists a subsequence $x_h \to y_0$ such that $y_0 \neq x_0$.
Since $(\uve - \phi_h)(x_h) \geq (\uve - \phi_h)(z_h)$, passing to the limit we obtain 
$$
		(u^* - \phi) (y_0) \geq \limsup_{\varepsilon, \frac{h}{\delta} \to 0} (\uve - \phi_h)(x_h) \geq 
		\lim_{\varepsilon, \frac{h}{\delta} \to 0, z_h \to x_0} (\uve - \phi_h)(z_h) = 
		(u^* - \phi)(x_0).
$$
This contradicts the fact that $u^* - \phi$ attains a strict maximum at $x_0$.
Exploiting the fact that $\uve - \phi_h$ attains a maximum at $x_h$,
\Cref{L:Monotonicity} (monotonicity) yields
$$
		T_{\varepsilon}[\phi_h](x_h) \geq T_{\varepsilon}[\uve](x_h) = f(x_h).
$$
Since $f\in C(\Omega)$, to prove $T[\phi](x_0) \ge f(x_0)$                
we only need to show that as $\varepsilon, \frac h {\delta} \to 0$
$$
		T_{\varepsilon}[\phi_h](x_h) \to T[\phi](x_0).
$$
This is a consequence of Step 1 and the fact that $x_h\in\Omega_{h,\delta}$
for $\delta$ sufficiently small, because $x_0\in\Omega$, $x_h \to x_0$
and the sequence of $\Omega_h\uparrow\Omega$ is non-decreasing.

\smallskip
\textit{Step 3: Boundary Behavior.}
We now prove that $u^*=u_*=g$ on $\partial\Omega$ via a barrier
argument similar to those in \cite{FeJe:16,NoZh,NoZh2};
we proceed as in \cite{FeJe:16}. This is
essential to apply the comparison principle for operator $T$ to relate
$u_*, u^*$ and $u$ in Step 4.

Let $p_k$ be the quadratic function in the proof of
\Cref{L:Barrier} (discrete boundary barrier)
associated with an arbitrary boundary point $x\in\partial\Omega$
 (the origin in the construction of $p_k$) and with constant $E=k$. We
recall that $p_k(x) =0$ and $p_k(z) \leq 0$ for all $z \in \partial\Omega$
can be made arbitrarily large for $k\to\infty$ by virtue of the
uniform convexity of $\Omega$. A simple consequence is that the
sequence of points $x_k\in\partial\Omega$ where $g+p_k$
(resp. $g-p_k$) attains a maximum (resp. a minimum)
over $\partial\Omega$ converges to $x$.
  
We now observe that taking $w_h \equiv 0$ in \Cref{L:Monotonicity}
(monotonicity) implies the following maximum principle:
if a discretely convex function $u_h$ satisfies
$T_{\varepsilon}[u_h](x_i)>0$ for all $x_i\in\Nhi$,
then $u_h$ attains a maximum over $\overline{\Omega}_h$ on
$\Nhb \subset \partial \Omega$. 
Apply this to
$T_{\varepsilon}[\uve + \interp p_k] > 0$ to deduce that $\uve + \interp p_k$
attains its maximum on $\Nhb$. In view of \eqref{extension}, we
may assume $z\in\Omega_h$ in
$u^*(x) = \limsup_{\varepsilon, \frac{h}{\delta} \to 0, z \to x}\uve(z)$.
Consequently,
\begin{align*}
u^*(x) & \le \limsup_{\varepsilon, \frac{h}{\delta} \to 0, z \to x}
\big(\uve(z)+\interp p_k(z) \big) - \liminf_{\varepsilon, \frac{h}{\delta}
  \to 0, z \to x} \interp p_k(z) \\
& \le \limsup_{\varepsilon, \frac{h}{\delta} \to 0} \
\max_{z \in \Nhb } \ \big(g + p_k\big) (z) -  p_k(x) 
\le g(x_k) + p_k(x_k) \le g(x_k),
\end{align*}
because $\max_{\Nhb} g+p_k \le \max_{\partial \Omega } g + p_k$,
whence taking $k \to \infty$ yields  $u^*(x) \leq g(x)$.

On the other hand, since $T_\ve[\interp p_k](x_i) > T_\ve[\uve](x_i)$
for all $x_i\in\Nhi$ and $k$ big enough, Lemma \ref{L:Monotonicity} implies that
$\uve-\interp p_k$ attains a minimum on $\Nhb$. Therefore,
arguing as before
\begin{align*}
  u_*(x)\ge \liminf_{\varepsilon, \frac{h}{\delta} \to 0} \
  \min_{z \in \Nhb} \ \big(g - p_k\big)(z) +  p_k(x)
  \ge g(x_k) - p_k(x_k) \ge g(x_k),
\end{align*}
whence $u_*(x) \geq g(x)$. This in turn gives
$u^* \leq g \leq u_* \leq u^*$ on $\partial\Omega$
as asserted.

\smallskip
\textit{Step 4: Comparison.} To prove that $u^*=u_*$ in
$\overline{\Omega}$ we use the following comparison principle for
\eqref{E:MAFull}:
if $v^-$ is a subsolution and is upper semi-continuous in
$\overline{\Omega}$, $v^+$ is a supersolution and is lower semi-continuous in
$\overline{\Omega}$, and $v^- \le v^+$ on $\partial\Omega$, then
$v^- \le v^+$ on $\overline{\Omega}$.
This result falls under the umbrella of \cite[Theorem 3.3]{CraIshLi}.
It hinges on an argument mentioned in \cite[Section 5.C]{CraIshLi}
that is briefly described for a more general form of the \MA operator
in \cite[V.3]{IL}. Both operators in \eqref{E:MA} and 
\eqref{E:MAFull} satisfy the requirements posed in \cite{IL}.
We apply this comparison principle to $v^-=u^*$ and $v^+=u_*$, which
satisfy the assumptions in view of Steps 2 and 3, to obtain
$u^*\le u_*$ in $\overline{\Omega}$. Since $u^* \ge u_*$ by
definition, this results in $u^*=u_*$ in $\overline{\Omega}$.

\smallskip 
\textit{Step 5: Uniform Convergence.} Step 4 implies the pointwise limit
$$
u(x) = \lim_{\ve\to0, z\to x} \uve(z)
\quad\forall \, x\in\overline{\Omega}.
$$
To see that this gives rise to uniform convergence we argue by
contradiction. We assume that for every $\ve$ there exist a point
$x_\ve\in\overline{\Omega}$ such that
$
|u(x_\ve) - \uve(x_\ve)| \ge \sigma,
$
for some $\sigma>0$. Since $\overline{\Omega}$ is compact, there exists
a subsequence (not relabeled) $x_\ve\to
x_0\in\overline{\Omega}$. Computing the limit $\ve\to0$ in the last
inequality yields the contradiction
$
|u(x_0) - u(x_0)| \ge \sigma.
$
This concludes the proof.
\end{proof}

\section{Numerical Experiments} \label{S:NumEx}

We present three examples in the square domain $\Omega = \Omega_h =[0,1]^2$.
The fact that $\Omega$ is not uniformly convex does not affect the 
existence of our discrete solution $\uve$, as the Dirichlet datum $g$ is
the trace of a convex function; however this is beyond the assumptions 
of the convergence theory.  
We implement the 2-scale method within the
MATLAB software FELICITY \cite{WalkerWeb,WalkerPaper}. 
We first consider two examples with smooth Hessian and with discontinuous
Hessian, and observe linear experimental rates of convergence with
respect to $h$; we further investigate rates theoretically in \cite{NNZ:17}.
The third example entails an unbounded right hand side $f$ and is not
guaranteed to converge by theory. We still observe convergence
experimentally.
 
\subsection{Semi-Smooth Newton Method}

We solve the nonlinear algebraic equation \eqref{E:2ScOp} via a
damped semi-smooth Newton iteration. Let
$\bz := (z_h(x_i))_{i=1}^N \in\mathbb{R}^N$ stand for the vector of
nodal values of a generic $z_h\in\Vh$;
thus $N$ is the cardinality of $\mathcal{N}_h$.
If $\bu_n =\left ( \uve^n(x_i)  \right)_{i=1}^N$,
$\bD \bT_{\varepsilon}[\bu_n]$
is the Jacobian matrix of the nonlinear map $\bT_\varepsilon: \mathbb{R}^N\to\mathbb{R}^N$
at $\bu_n$, and $\bF=(f(x_i))_{i=1}^N$, then a Newton increment is given by
\[
\bD \bT_{\varepsilon}[\bu_n]  \ \bw_n = \bF - \bT_{\varepsilon}[\bu_n]
\]
and the $n$-th Newton step by
$
\bu_{n+1} = \bu_n + \tau \bw_n,
$
where the damping parameter $\tau \in (0,1]$, which might depend on $n$, satisfies
\[
\| f- T_{\varepsilon}[\uve^n + \tau w_n] \|_{L^2(\Omega)}  <
\| f- T_{\varepsilon}[\uve^n] \|_{L^2(\Omega)}.
\]

We now explain the construction of $\bD\bT_{\varepsilon}[\bu_n]$. 
Evaluating $\sdd z_h(x_i;v_j)$, in view of \eqref{E:2Sc2Dif}, requires
knowing $z_h$ at $x_i^\pm=x_i \pm \rho \delta v_j$, which are not necessarily nodes
of $\mathcal N_h$. Since $x_i^\pm$ belong to two simplices of $\Th$,
and $z_h\in\Vh$, the values $z_h(x_i^\pm)$ can be determined in terms
of the barycentric coordinates of $x_i^\pm$. Therefore, if we define
$\sddb \bz(i;v_j) := \sdd z_h(x_i;v_j)$, then we realize that this operator
involves $2(d+1)+1$ components of $\bz$ and is thus sparse. 
We likewise define $\sddbp$ and $\sddbm$ to be 
the component-wise versions of $\sddp$ and $\sddm$.
The operator $\bT_\varepsilon$ reads
\begin{equation}\label{E:boldOp}
 \bT_\varepsilon[\bz](i)  :=
\min_{\bv=(v_j)_{j=1}^d\in\Vperpt} \left(\prod_{j=1}^d \sddbp \bz(i;v_j) -
\sum_{j=1}^d \sddbm \bz(i;v_j) \right) = f(x_i),
\end{equation}
according to \eqref{E:PracticalOp}.
\\
Let now $\bv^i=(v_j^i)_{j=1}^d\in\Vperpt$ be a set of directions that
realize the minimum of $\bT_\varepsilon[\bu_n](i)$ and denote 
$\bV:=(\bv^i)_{i=1}^N\in\mathbb{R}^{d\times d \times N}$, the collection
of the minimizing d-tuples $\bv^i$ for all $i = 1,\ldots N$. Combining
the above notations, we denote the matrix that contains the j-th 
minimizing directions for each node by $\bV_j \in \mathbb{R}^{d \times N}.$ 
This allows us to display our Jacobian in a vectorized form, using the notation 
$\sddb \bu_n(\bV_j) := (\sdd \bu_n(i;v_j^i))_{i=1}^N$, 
since \eqref{E:boldOp} gives for $\bz = \bu_n$:
$$
\bT_\varepsilon[\bu_n] = \bigodot_{j=1}^d \sddbp \bu_n(\bV_j) - \sum_{j=1}^d \sddbm \bu_n(\bV_j),
$$
where $\odot$ stands for the component-wise multiplication of vectors.
Using Danskin's Theorem \cite{BeNeOz} and the product rule, we can 
then obtain $ \bD \bT_{\varepsilon}[\bu_n]\bw_n.$
For that, we need to differentiate
 $\sddbp \bu_n(\bV_j).$ We observe that for each component,
$\sddbp \bu_n(i;v_j^i) = \max\{\sddb \bu_n(i;v_j^i),0\}$ is not
differentiable at $\sddb \bu_n(i;v_j^i)=0$. As a result, we use the so-called slant derivative
in the direction $\bw_n$ \cite{CNQ,HIK}, in order to compute:
\begin{align*}
\bD[ \sddbp \bu_n(\bV_j) ] \mathbf{w}_n &=
\bH^+[\sddb \bu_n(\bV_j)] \odot \sddb \bw_n(\bV_j),
\end{align*}
each component of which is equal to 
\begin{align*}
(\bD[ \sddbp \bu_n(\bV_j) ] \mathbf{w}_n)_i &=
\begin{cases}
\sddb \mathbf{w}_n(i;v_j^i) \quad & \textrm{if }\sddb \bu_n(i;v_j^i) > 0
\\
0 & \textrm{if } \sddb \bu_n(i;v_j^i) \leq 0.
\end{cases}
\end{align*}
Here $\bH^+$ is the operator that assigns 1
to a strictly positive component and zero otherwise.
Similarly, $\bD[ \sddbm \bu_n(\bV_j) ] \mathbf{w}_n =
\bH^-[\sddb \bu_n(\bV_j)] \odot \sddb \bw_n(\bV_j)$
where $\bH^-$ assigns $-1$ to a non-positive component and $0$ otherwise.
We are now ready to employ Danskin's Theorem \cite{BeNeOz} and the product
rule to obtain similarly to \cite{FrOb1}:
\small
\begin{equation*}\label{E:Jacobian}
  \bD \bT_{\varepsilon}[\bu_n]\bw_n \!=\! \sum_{j=1}^d \! \sddb \bw_n(\bV_j)
  \!\bigodot\! \Big( \bH^+[\sddb
  \bu_n(\bV_j)] \bigodot_{k \not = j}  \sddbp \bu_n(\bV_k)
- \bH^-[\sddb \bu_n(\bV_j)] \Big),
\end{equation*}
\normalsize
The presence of both operators $\bH^+$ and $\bH^-$ enforces
discrete convexity, and their definition at zero yields non-singular
Jacobians computationally.
This flexibility in choosing $\bH^+$ and
$\bH^-$ with vanishing argument is consistent with the definition of the slant
derivative for the max and min functions \cite{HIK}.

We initialize the Newton iteration with $\bu_0$ corresponding to the
Galerkin solution in $\Vh$ to the auxiliary problem
$\Delta u_0=(d! f)^{1/d}$ in $\Omega$ and $u_0=g$ on $\partial\Omega$,
as proposed in \cite{FrOb1}, but only for the coarser mesh
$h=2^{-5}$. For all subsequent refinements we interpolate the discrete
solution in the previous coarse mesh and use it as initial guess. This
greatly improves the residual error and leads to minimal or no damping.

\subsection{Accuracy}\label{S:Accuracy}

We examine the performance of our two-scale method mainly with
two examples, with smooth and discontinuous Hessians;
a third example entails an unbounded $f$.
For the first two examples we choose $\delta=h^{\alpha}$ and $\theta=h^{\beta}$
for appropriate $\alpha, \beta >0$ which yield provable rates of convergence
according to theory \cite{NNZ:17}. We stress that
smaller values of $\theta$ lead to similar convergence rates but affect the 
sparsity pattern of the matrix in the semi-smooth Newton iteration
because the number of search directions within $\St$ increase. We thus
choose $\theta$ consistent with theory \cite{NNZ:17}.
The computation of $\rho$ in \eqref{E:2Sc2Dif} is exact,
because $\Omega_h$ is a square, although need not be in general.
We stop the Newton iterations when 
	\begin{equation*}
	 \| f- T_{\varepsilon}[\uve^{n+1} ] \|_{L^2(\Omega)}  < 10^{-8}
	 \| f- T_{\varepsilon}[\uve^0] \|_{L^2(\Omega)}.
	\end{equation*}

\smallskip\noindent
{\bf Smooth Hessian:} We choose the solution $u$ and forcing $f$ to be
\[
u(x) = e^{|x|^2/2},
\quad
f(x) = (1+|x|^2) e^{|x|^2}
\quad\forall x\in\Omega.
\]
We choose $\delta, \theta \approx h^{1/2}$ on the basis of
\cite[Theorem 5.3]{NNZ:17},
and report the results in Table \ref{Ta:SmoothandC11}(a) and Figure \ref{F:SmRates}(a).
We observe linear experimental convergence rates with respect to $h$, thus better
than predicted in \cite{NNZ:17}.
The number $P=4(D-1)$ stands for the number of points $x_i\pm\delta
v_j$ used in the evaluation of the operator $T_\varepsilon$ at each
interior node $x_i\in\Nhi$ and for $D$ directions $v_j$ in a quarter
circle dictated by $\theta$.

\FloatBarrier
\begin{figure}[!htb]
	\includegraphics[scale=0.31]{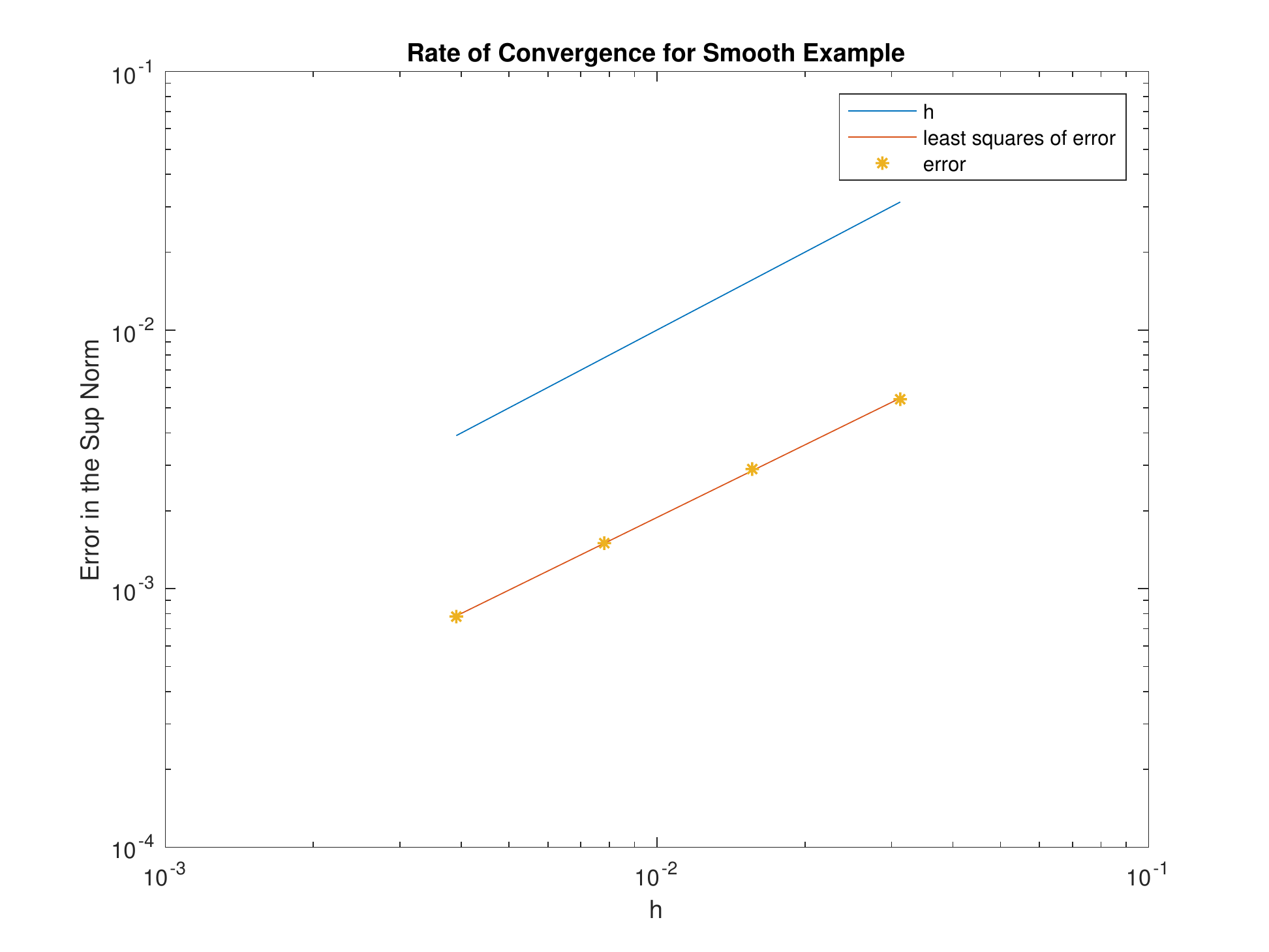}
	\includegraphics[scale=0.31]{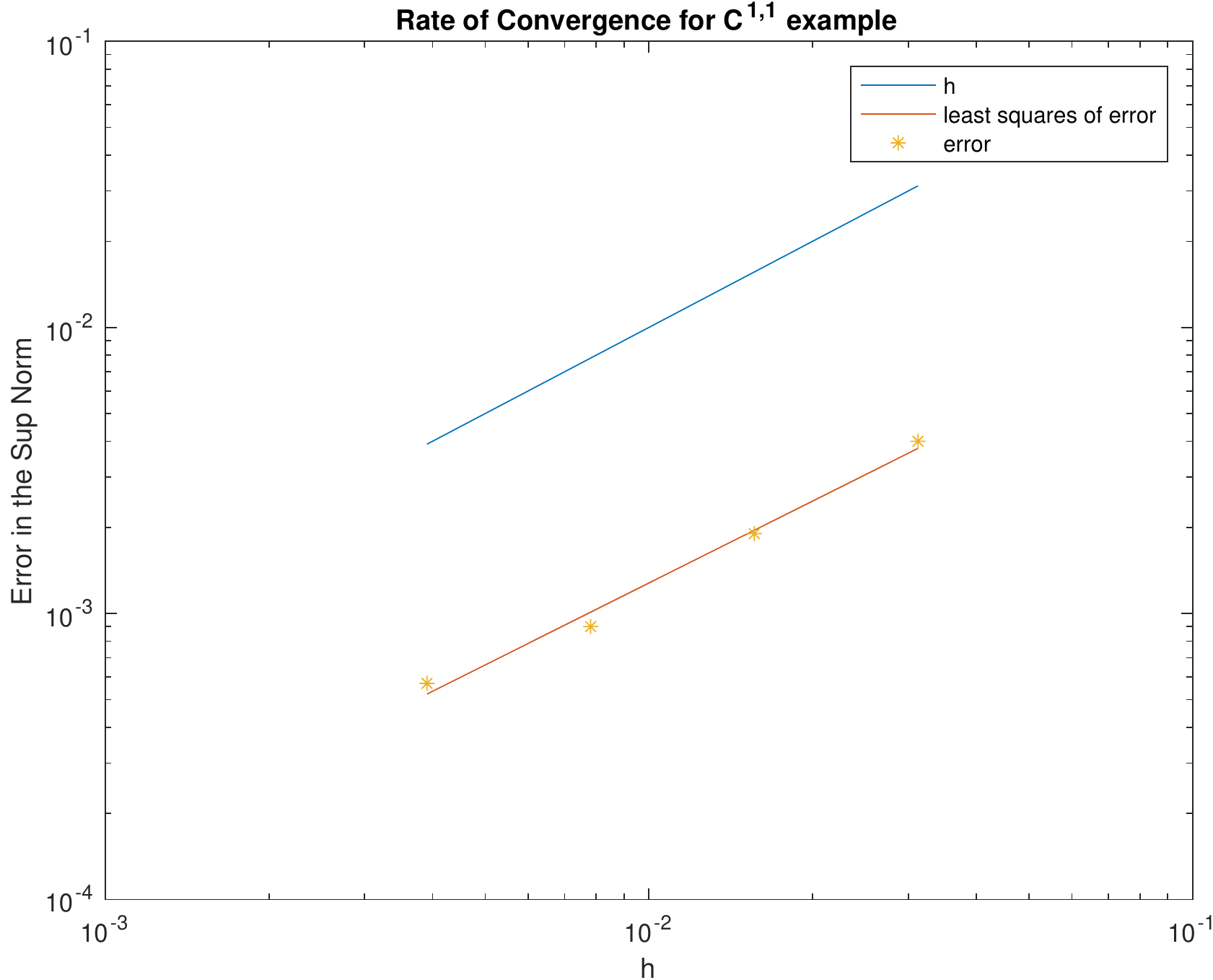}
	\caption{\small Experimental rates of convergence: the order is about
        $1$ in terms of $h$ for both the smooth Hessian with $\delta, \theta \approx h^{1/2}$ (left) and discontinuous Hessian $\delta \approx h^{4/5}, \theta \approx h^{2/5}$ (right).
	}
	\label{F:SmRates}
\end{figure}
\FloatBarrier

\FloatBarrier
\begin{table}[!htb]
\begin{center}
\begin{tabular}[t]{ | l | c | c | c |}
\hline
Degrees of freedom    & $P$: number of points & $L_{\infty}-$error  & Newton steps \\
\hline\hline
N= 1089, $h=2^{-5}$	 &	  16         	&  $5.4 \ 10^{-3}$			& 8  \\ 
\hline 
N=4225,  $h=2^{-6}$	  &          	24	 & 	$2.8 \ 10^{-3}$			&  7 \\
\hline
N=16641,  $h= 2^{-7}$	&	36	           & 	$1.5 \ 10^{-3}$			&  7 \\
\hline
N= 66049,  $h=2^{-8} $  	&	52	           & 	$7.8 \ 10^{-4}$			&   8  \\
\hline
\end{tabular}
\end{center}
\vskip0.05cm
\begin{center}
\begin{tabular}[t]{ | l | c | c | c |}
\hline
Degrees of freedom      & $P$: number of points & $L_{\infty}-$error  & Newton steps \\
\hline\hline
N= 1089, $h=2^{-5}$	    &	     20     &  $4.0 \ 10^{-3}$			& 10  \\ 
\hline
N=4225,  $h=2^{-6}$	    &	     28      & 	$1.9 \ 10^{-3}$			&  9 \\
\hline
N=16641,  $h= 2^{-7}$  &		36	           & 	$ 9.0 \ 10^{-4}$			&  9 \\
\hline
N= 66049,  $h=2^{-8} $  &		48           & 	$5.7 \ 10^{-4}$			&   9  \\
\hline
\end{tabular}
\end{center}
\FloatBarrier
\vskip0.05cm
\caption{\small Smooth Hessian with $\delta, \theta \approx h^{1/2}$ (top),
Discontinuous Hessian with $\delta \approx h^{4/5}, \theta \approx h^{2/5}$ (bottom).
The convergence rate is about linear in $h$ for both cases
(see Figure \ref{F:SmRates}),
whereas the number of Newton
steps seem insensitive to the dimension $N$ of the nonlinear system.}
\label{Ta:SmoothandC11}
\end{table}

\medskip\noindent
{\bf Discontinuous Hessian:}
We choose the solution $u$ and forcing function $f$ to be
\[
u(x) =  \frac{1}{2} \left(\max({|x-x_0|-0.2,0)}\right)^2, 
\quad
f(x) =\max{\left(1-\frac{0.2}{|x-x_0|},0\right)}
\quad\forall x\in\Omega,
\]
where $x_0=(0.5,0.5)$. Since $f=0$ in the ball centered at $x_0$ of
radius $0.2$, this example is degenerate elliptic.
We choose $\delta = O(h^{4/5})$ and $\theta = O(h^{2/5})$
on the basis of \cite[Theorem 5.6]{NNZ:17},
and observe experimentally again a linear decay rate in $h$, which is better than predicted.
This time \cite{NNZ:17} suggests a larger $\theta$, but we choose a
smaller $\theta$ without compromising the sparsity pattern of the Newton matrix.
As illustrated on \Cref{Ta:SmoothandC11}, despite its degeneracy and
lack of global regularity, this example does not exhibit any
problematic behavior compared to the smooth case.

We next explore the behavior of the operator $T_\varepsilon$ in
terms of the sign of the truncation error $E_\varepsilon[\uve]:=f-T_\varepsilon[\uve]$.
In \Cref{F:TruncErrSigns} (left) we split the interior nodes $\Nhi$
into three sets, using the threshold $\eps \approx 10^{-16}$
close to the machine precision of MATLAB:
blue nodes $x_i$ ($34\%$ of $\Nhi$) correspond to $E_\varepsilon[\uve](x_i)<-\eps$;
yellow nodes $x_i$ ($34\%$ of $\Nhi$) correspond to $E_\varepsilon[\uve](x_i)>\eps$;
and magenta nodes $x_i$ ($32\%$ of $\Nhi$) correspond
to $|E_\varepsilon[\uve](x_i)|\le \eps$.
Moreover, \Cref{F:TruncErrSigns} (left) displays with dashed lines the circle
of discontinuity $|x-x_0|=0.2$ and the two circles that are $\delta-$away from it.
We point out that all points between the outer and inner circle are affected by the
singularity, but they are mostly magenta nodes.

\FloatBarrier
\begin{figure}[!htb]
\includegraphics[scale=0.18]{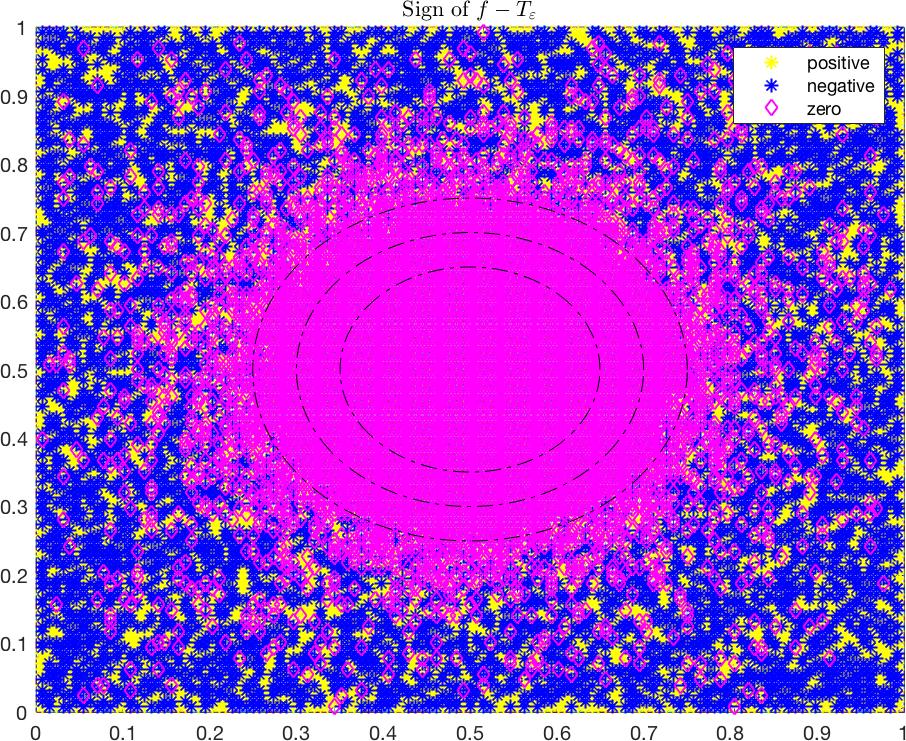}
\includegraphics[scale=0.18]{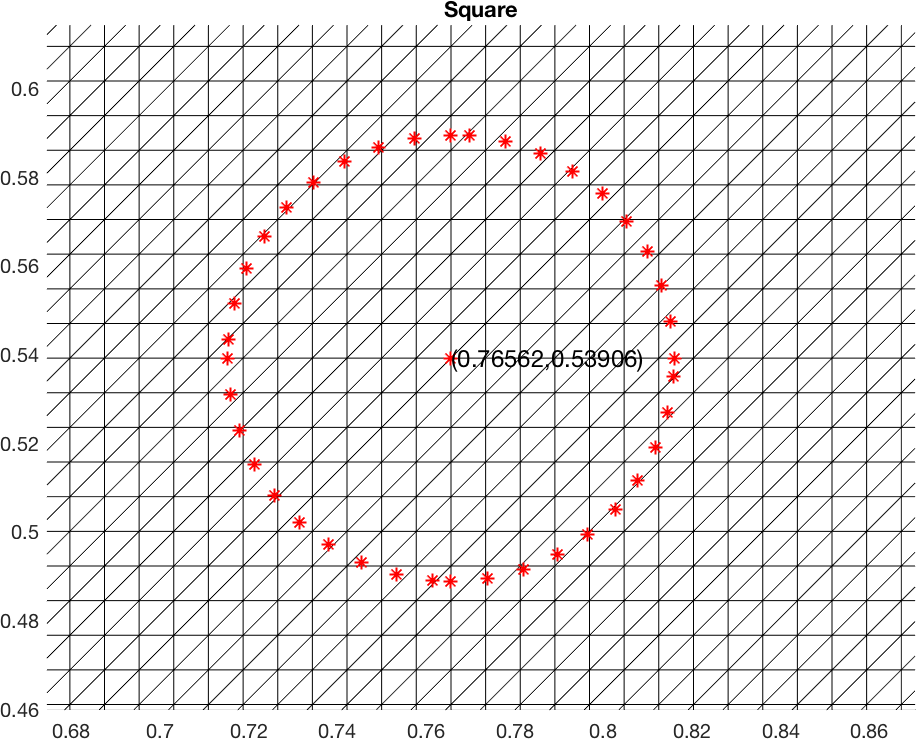}
\caption{\small
  (left) Sign of the truncation error $E_\varepsilon[\uve]=f-T_\varepsilon[\uve]$
  at nodes $x_i\in\Nhi$ for the example with discontinuous
  Hessian and $h=2^{-7}$: blue node $x_i$ if $E_\varepsilon[\uve](x_i)<-\eps$,
  yellow node $x_i$ if $E_\varepsilon[\uve](x_i)>\eps$, and magenta node $x_i$ if
  $|E_\varepsilon[\uve](x_i)| \le \eps$, where $\eps \approx 10^{-6}$.
  We observe that the region $|x_i-x_0| < 0.2$, where
  $f \equiv 0$, is magenta.
  (right) Set of directions in $\St$ centered at a node $x_i\in\Nhi$
  and scaled by $\delta$ for the same example;
  note that $\delta/h \approx 7$.}
	\label{F:TruncErrSigns}
\end{figure}
\FloatBarrier

Lastly, we use the same example to provide some insight on the
two-scale nature of our method. In \Cref{F:TruncErrSigns} (right) we display a
node $x_i = (0.7656,0.5391)$ within a zoomed mesh, and the thirty-six
directions $v_j$ in $\St$ scaled by $\delta$ which are used for the calculation
of $T_\varepsilon[\uve](x_i)$ for mesh size $h=2^{-7}$. We see that
for this specific instance, $\delta/h \approx 7$, and that most points
$x_i\pm\delta v_j$ are not nodes. We employ a fast search routine
within FELICITY to locate such points \cite{WalkerPaper,WalkerWeb}.

\medskip\noindent
{\bf Unbounded $f$:}
We finally present computational results for an example that
 does not fall within our theory because the right hand
 side $f$ is not uniformly bounded. More precisely, we
 consider the following $f$, which becomes unbounded near the
 corner $(1,1)$ of $\Omega$, and the corresponding exact solution
 $u$, which is twice differentiable
 in $\Omega$ but possesses an unbounded gradient near
 $(1,1)$ \cite{FrOb1}:
 \[
 u(x) =  -\sqrt{2-|x|^2},
 \quad
 f(x) = 2(2-|x|^2)^{-2}
 \quad\forall x\in\Omega.
 \]
\Cref{T:unbounded-example} shows that our method converges
as the meshsize $h$ decreases, but with a reduced rate and at the cost
of an increased number of Newton iterations.
We choose $\delta$ and $\theta$ similarly to the smooth Hessian case,
but without any theoretical justification from \cite{NNZ:17}.
We note that now we do not follow the approach of interpolating the coarse
solution to the finer mesh, because $u \notin\Wti(\Omega)$.
Instead, we use the initial guess that corresponds to
$\Delta u_0=(d! f)^{1/d}$, which introduces more damping, say
$\tau<1$, in the Newton method.
\FloatBarrier
\begin{table}[!htb]
	\begin{center}
		\begin{tabular}[t]{ | l | c | c | c |}
			\hline
			Degrees of freedom    & $P$: number of points & $L_{\infty}-$error  & Newton steps \\
			\hline\hline
			N= 1089, $h=2^{-5}$	 &	  16         	&  $8.3 \ 10^{-3}$			& 8  \\ 
			\hline 
			N=4225,  $h=2^{-6}$	  &     24	 		& 	$5.0 \ 10^{-3}$			&  15 \\
			\hline
			N=16641,  $h= 2^{-7}$	&	36	           & 	$3.3 \ 10^{-3}$			&  18 \\
			\hline
			N= 66049,  $h=2^{-8} $  	&	52	           & 	$2.0 \ 10^{-3}$			&  50   \\
			\hline
		\end{tabular}
	\end{center}
\caption{\small Unbounded $f$. We observe that the method converges,
but with a rate slower than linear and at the cost of increasing number of Newton iterations with each refinement.}
\label{T:unbounded-example}
\end{table}
\FloatBarrier

\noindent
{\bf Computational Performance:}
The process of locating
the triangle of the mesh containing $x_i\pm\delta v_j$ and computing the
barycentric coordinates is a rather small percentage of the total
computing time. For instance, for $h=2^{-7}$ and the smooth
Hessian, this represents just 3\% ($<3$ sec) of the total computation
time (90 sec). Because of the reduced sparsity pattern of the Newton
matrices, the most time demanding task of the method is solving
the linear systems, for which we use Matlab's backslash operator. This
takes 42.7\% of the total time. This computation is performed on an
Intel 2.2 GHz i7 CPU, 16 GB RAM using Matlab R2017b.

\medskip\noindent
{\bf Acknowledgments.}
We are indebted to S.W. Walker for providing assistance and guidance
with the software FELICITY and to H. Antil for numerous discussions
about implementing the 2-scale method and the semi-smooth
Newton solver.

\bibliographystyle{amsplain}

\begin{thebibliography}{99}
	
	\bibitem{AguileraMorin09}
	{\sc N. E. Aguilera, P. Morin},
	{\em On convex functions and the finite element method},
	SIAM J. Numer. Anal., 47(4):3139-3157, 2009. 

	
	\bibitem{Aw}
	  {\sc G. Awanou},
          {\em Convergence rate of a stable, monotone and
		consistent scheme for the Monge-Amp\`ere equation},
          Symmetry, 8 (18):1-7, 2016.
	
	\bibitem{Awanou:15}
           {\sc G. Awanou},
           {\em Standard finite element for the numerical resolution of the
	    elliptic Monge-Amp\`ere equation: Aleksandrov solutions},
		arXiv:1310.4568v5.
	
	
	\bibitem{BaSoug}
	{\sc G. Barles, P. Souganidis}, {\em Convergence of approximation schemes for linear second order equations}, Asymptotic Anal., 4(3):271-283, 1991.
	
	
	
	\bibitem{BeCoMi}
	{\sc  J-D. Ben\`amou, F. Collino, J-M. Mirebeau}, {\em Monotone and consistent discretization of the Monge-Amp\`ere operator}, arXiv:1409.6694, 2014.
	
	
	\bibitem{BeFrOb}
	{\sc  J-D. Ben\`amou, B. Froese, A. Oberman}, {\em Two numerical methods for the elliptic Monge-Amp\`ere equation}, ESAIM: M2AN, Volume 44 (2010), Number 4, pp 737-758. 
	
	\bibitem{BeNeOz}
	{\sc D. P. Bertsekas, A. Nedic, A. E. Ozdaglar}, {\em Convex Analysis and Optimization}, Athena Scientific, 2003. 
	

	\bibitem{Blocki}
	{\sc Z. Blocki}, {\em Smooth exhaustion functions in convex domains},
	Proc. Amer. Math. Soc., 125(2), 477-484, 1997.
	

	\bibitem{BrenScott}
	{\sc S. C. Brenner, R. Scott},{\em The Mathematical Theory of Finite Element Methods}, Springer, 2008.
	
	
	\bibitem{BrenNeil}
	{\sc S. C. Brenner, T. Gudi, M. Neilan, L-Y. Sung.},{\em $C^0$
          penalty methods for the fully nonlinear Monge-Amp\`ere
          equation}, Math. Comp., 80(276):1979-1995, 2011. 
	
	
	\bibitem{CaNiSp}
	{\sc L.A. Caffarelli, L. Nirenberg, J. Spruck},{\em The
          Dirichlet problem for nonlinear second-order elliptic
          equations I. Monge-Amp\`ere equation}, Comm. Pure
        Appl. Math., 37 (3): 369-402, 1984.

	
	\bibitem{CNQ}
	{\sc  X. Chen, Z. Nashed, L. Qi}, {\em Smoothening methods and
          semismooth methods for nondifferentiable operator
          equations},  SIAM J. Numer. Anal. 38 (4):1200-1216, 2000. 

        \bibitem{CraIshLi}
	{\sc M.G. Crandall, H. Ishii, P.-L. Lions}, {\em User's guide to viscosity solutions of second order partial
	differential equations}, Bull. Amer. Math. Soc., 27(1):1-67,
        1992.
        
	\bibitem{DeGl1}
	{\sc E. J. Dean, R. Glowinski}, {\em  An augmented Lagrangian approach to the numerical solution of the Dirichlet problem for the elliptic Monge-Amp\`ere equation in two dimensions}, Electron. Trans. Numer. Anal., 22: 71-96, 2006.
	
	\bibitem{DeGl2}
	{\sc E. J. Dean, R. Glowinski}, {\em  On the numerical solution of the elliptic Monge-Amp\`ere
		equation in dimension two: A least-squares approach}, Partial Differential Equations, Comput. Meths. Appl. Sci. 16, Springer, Dordrecht, 43-63, 2008. 
	
	\bibitem{DePhilippisFigalli:13}
	{\sc G. DePhilippis, A. Figalli},
		{\em Second order stability for the Monge-Amp\`ere equation and strong
			Sobolev convergence of optimal transport maps},
		Anal. PDE 6 (4):993-1000, 2013.
	

        \bibitem{DePhilippisFigalli:14}
	{\sc G. DePhilippis, A. Figalli},
        {\em The Monge-Amp\`ere equation and its link to optimal transportation},
        Bull. Amer. Math. Soc. 51 (4):542-580, 2014.
	
        
	\bibitem{Eng}
	{\sc R. Engelking}, {\em General Topology},  Helderman, Berlin, 1989
	
	\bibitem{FeGlNe}
	{\sc X. Feng, R. Glowinski, M. Neilan}, {\em Recent
          developments in numerical methods for fully nonlinear second
          order partial differential equations}, SIAM Rev.,
        55(2):205-267, 2013.
	
	\bibitem{FeJe:16}
		{\sc X. Feng, M. Jensen},
		{\em Convergent semi-Lagrangian methods for the Monge-Amp\`ere
			equation on unstructured grids}, arXiv:1602.04758v2, Sep 2016.
	
	
	\bibitem{FeNe1}
	{\sc X. Feng, M. Neilan}, {\em  Mixed finite element methods for the fully nonlinear Monge-Amp\`ere equation based on the vanishing moment method}, SIAM J. Numer. Anal., 47(2), 1226-1250, 2009. 
	
	
	\bibitem{FeNe2}
	{\sc X. Feng, M. Neilan}, {\em  Vanishing moment method and
          moment solutions for fully nonlinear second order partial
          differential equations}, J. Sci. Comput., 38 (1):74-98, 2009.
	
	
	\bibitem{Fr}
	{\sc B. Froese}, {\em  A numerical method for the elliptic Monge-Amp\`ere equation with transport boundary conditions}, SIAM J. Sci. Comput., 34(3):A1432-A1459, 2012.
	
	
	\bibitem{FrOb1}
	{\sc B. Froese, A. Oberman}, {\em  Convergent finite difference solvers for viscosity solutions of the elliptic Monge-Amp\`ere equation in dimensions two and higher}, SIAM J. Numer. Anal., 49(4):1692-1714, 2012.
	
	\bibitem{Gl}
	{\sc R. Glowinski}, {\em  Numerical methods for fully nonlinear elliptic equations}, Proceedings of the 6th International Congress on Industrial and Applied Mathematics, R. Jeltsch and G.
	Wanner, eds., ICIAM 07, Invited Lectures, 155--192, 2009.
	
	\bibitem{Gut}
	{\sc C. Guti\'errez}, {\em The Monge-Amp\`ere Equation}, 
	Birkh\"auser, 2001.
	
	\bibitem{HIK}
	{\sc M. Hinterm\"uller, K. Ito, K. Kunisch}, {\em The primal-dual active set strategy as a semismooth Newton method}, SIAM J. Optim., 13(3):865-888, 2003. 
	
	\bibitem{IL}
	{\sc H. Ishii, P. L. Lions}, {\em Viscosity solutions of fully
          nonlinear second-order elliptic partial differential
          equations}, J. Diff. Eqs. 83 (1):26-78, 1990.

	
	\bibitem{Mir}
	{\sc  J-M. Mirebeau}, {\em Discretization of the 3D Monge-Amp\`ere operator, between wide stencils and power diagrams}, arXiv:1503.00947, 2014.
	
	\bibitem{NNZ:17}
		{\sc R. H. Nochetto, D. Ntogkas, W. Zhang}, {\em Two-scale method
			for the Monge-Amp\`ere equation: Pointwise error estimates}, arXiv:1706.09113

	
	
	
	\bibitem{NoZh}
	{\sc R. H. Nochetto, W. Zhang}, {\em Discrete ABP estimate and
		convergence rates for linear elliptic equations in non-divergence
		form}, Found. Comp. Math. (online), 2017.
	
	\bibitem{NoZh2}
	{\sc R. H. Nochetto, W. Zhang}, {\em Pointwise rates of convergence
		for the Oliker-Prussner method for the Monge-Amp\`ere equation}, arXiv:1611.02786. 
	
	
	\bibitem{Ob1}
	{\sc A. Oberman}, {\em Convergent difference schemes for
          nonlinear elliptic and parabolic equations: Hamilton-Jacobi
          equations and free boundary problems}, SIAM J. Numer. Anal,
        44 (2):879-895, 2006. 
	
	\bibitem{Ob2}
	{\sc A. Oberman}, {\em The convex envelope is the solution of a nonlinear obstacle problem}.
	Proc. Amer. Math. Soc. 135:1689-1694, 2007.
	
	\bibitem{OlPr}
	{\sc V. I. Oliker, L. D. Prussner}, {\em On the numerical solution of the equation $(\partial^2 z/\partial x^2)(\partial^2z/\partial y^2)-(\partial^2z/\partial x \partial y)^2 = f$ and its discretizations}, I. Numer. Math., 54(3):271-293, 1988.
	
	\bibitem{TrudingerWang:08}
		{\sc N. S. Trudinger, X. J. Wang},
		{\em Boundary regularity for the Monge-Amp\`ere and affine maximal
			surface equations},
		Ann. of Math. (2) 167:993-1028, 2008.

	
	
	\bibitem{Wachsmuth:16}
	{\sc G. Wachsmuth},
	{\em Conforming approximation of convex functions with the finite
		element method}, Numer. Math., 1-32, 2017.
	

	\bibitem{WalkerPaper}
	{\sc S. W. Walker}, {\em FELICITY: A Matlab/C++ Toolbox for Developing Finite Element Methods and Simulation Modeling}, submitted.


	\bibitem{WalkerWeb}
	{\sc S. W. Walker}, {\em {FELICITY}: {F}inite {EL}ement {I}mplementation and
		{C}omputational {I}nterface {T}ool for {Y}ou}.
	\newblock http://www.mathworks.com/matlabcentral/fileexchange/31141-felicity.

	
\end{thebibliography}

\end{document}